\newcommand{\WW}{$W_6^{++}$}
\newtheorem{thml}{Theorem}
\newtheorem{claimf}{Claim}
\newcommand{\commentout}[1]{}
\newcommand{\cR}{{\cal R}}
\newcommand{\cC}{{\cal C}}
\begin{document}
\title{$\alpha_i$-Metric Graphs: Hyperbolicity
\thanks{This work was supported by a grant of the Romanian Ministry of Research, Innovation and Digitalization, CCCDI - UEFISCDI, project number PN-III-P2-2.1-PED-2021-2142, within PNCDI III.}}

\author{Feodor F. Dragan\inst{1} \and
Guillaume Ducoffe\inst{2}  }

\institute{Computer Science Department, Kent State University, Kent, USA 
\email{dragan@cs.kent.edu}  
\and 
National Institute for Research and
Development in Informatics and \\ University
of Bucharest, Bucure\c{s}ti, Rom\^{a}nia 
\email{guillaume.ducoffe@ici.ro} 
}

\maketitle

\begin{abstract}
     A graph is called $\alpha_i$-metric ($i \in \mathbb{N}$) if it satisfies the following $\alpha_i$-metric property for every vertices $u, w, v$ and $x$: if a shortest path between $u$ and $w$ and a shortest path between $x$ and $v$ share a terminal edge $vw$, then $d(u,x) \ge d(u,v) + d(v,x) - i$. The latter is a discrete relaxation of the property that in Euclidean spaces the union of two geodesics sharing a terminal segment must be also a geodesic. Recently in (Dragan \& Ducoffe, {\it WG'23}) we initiated the study of the algorithmic applications of $\alpha_i$-metric graphs. Our results in this prior work were very similar to those established in (Chepoi et al., {\it SoCG'08}) and (Chepoi et al., {\it COCOA'18}) for graphs with bounded {\em hyperbolicity}. The latter is a heavily studied metric tree-likeness parameter first introduced by Gromov. In this paper, we clarify the relationship between hyperbolicity and the $\alpha_i$-metric property, proving that $\alpha_i$-metric graphs are $f(i)$-hyperbolic for some function $f$ linear in $i$.
     We provide different proofs of this result, using various equivalent definitions to graph hyperbolicity. By contrast, we give simple constructions of $1$-hyperbolic graphs that are not $\alpha_i$-metric for any constant $i$.
     Finally, in the special case of $i=1$, we prove that $\alpha_1$-metric graphs are $1$-hyperbolic, and the bound is sharp.
     By doing so, we answer some questions left open in (Dragan \& Ducoffe, {\it WG'23}).
     \medskip

{\it Keywords:} metric graph classes; $\alpha_i$-metric; hyperbolicity.
\end{abstract}

\section{Introduction}\label{sec:intro}

For any undefined graph terminology, see~\cite{BoMu08}.
In what follows, we only consider graphs $G=(V,E)$ that are finite, undirected, unweighted, simple and connected.
The \emph{distance} $d_G(u,v)$ between two vertices $u,v \in V$ is the minimum length (number of edges) of a path between $u$ and $v$ in $G$. 
The \emph{interval} $I_G(u,v)$ between $u$ and $v$ is the set of all vertices contained in some shortest $(u,v)$-path, {\it i.e.}, $I_G(u,v) = \{ w \in V : d_G(u,v) = d_G(u,w) + d_G(w,v)\}$.
We compare two families of metric properties on graphs, defined on  quadruples of vertices $u,v,w,x$ as follows for every $i \in \mathbb{N}$:
\begin{description} 
\item[$\alpha_i$-metric property:] if $v \in I_G(u,w)$ and $w\in I_G(v,x)$ are adjacent,
  then \\  $~~~~~~~~~~~~~~~~~~~~~~~~~~~d_G(u,x)\geq d_G(u,v) + d_G(v,x)-i=d_G(u,v) + 1 + d_G(w,x)-i;$
\end{description} 
\begin{description} 
\item[$\beta_i$-metric property:] if $d_G(u,v) + d_G(w,x) \le d_G(u,w) + d_G(v,x) \le d_G(u,x) + d_G(v,w)$,
  then \\  $~~~~~~~~~~~~~~~~~~~~~~~~~~~d_G(u,x) + d_G(v,w) - d_G(u,w) - d_G(v,x) \le i.$
\end{description} 
A graph is called \emph{$\alpha_i$-metric} if it satisfies the $\alpha_i$-metric property for every four vertices $u,v,w,x$.
It is called \emph{$\delta$-hyperbolic} if it satisfies the $\beta_{2\delta}$-metric property for every four vertices $u,v,w,x$.
The \emph{hyperbolicity} of $G$ is the smallest half-integer $\delta$ such that it is $\delta$-hyperbolic.
Roughly, in an $\alpha_i$-metric graph $G$, the union of two shortest paths with a common terminal edge results in an ``almost shortest'' path with defect at most $i$.
By comparison, the less intuitive parameter hyperbolicity represents the smallest possible $\delta$ such that any four vertices of $G$ can be embedded in a weighted tree with distortion at most $\delta$ of their pairwise distances in $G$~\cite{Dres84}\footnote{
Let $G=(V,E)$ be a graph, and let $X \subseteq V$.
An \emph{embedding} from $X$ to another graph $H=(V',E')$ is a mapping $\varphi : X \mapsto V'$.
The \emph{distortion} of the embedding $\varphi$ is defined as $\max\{ |d_G(x,y)-d_H(\varphi(x),\varphi(y))| : x,y \in X \}$.
We call $\varphi$ an \emph{isometric} embedding if it has null distortion: $d_G(x,y) = d_H(\varphi(x),\varphi(y))$ for every $x,y \in X$. 
}.
Both the $\alpha_0$-metric property and the $\beta_0$-metric property are satisfied by trees.
In~\cite{BaCh2003}, it was observed that if there exists an embedding of a graph $G$ in some weighted tree $T$ with distortion at most $\eta$, then $G$ must be $\alpha_{\lfloor 3\eta \rfloor}$-metric and $\frac{\lfloor 4\eta \rfloor}2$-hyperbolic. This makes these properties interesting to study on their own.

    \paragraph{Related work.}

The general $\alpha_i$-metric property ($i \in \mathbb{N}$) was introduced in~\cite{Ch86}, then further studied in~\cite{YuCh1991}.
Furthermore, the $\alpha_1$-metric property was considered in~\cite{Ch86,Ch88} for chordal graphs.
It was proved that all chordal graphs~\cite{Ch86} and all plane triangulations with inner vertices of degree at least seven~\cite{WG16} are $\alpha_1$-metric.
All distance-hereditary graphs~\cite{YuCh1991}, and even more strongly, all HHD-free graphs~\cite{ChD03}, are $\alpha_2$-metric. 
The $\alpha_0$-metric graphs are exactly the ptolemaic graphs, {\it i.e.} the chordal distance-hereditary graphs~\cite{Hov77}.
Furthermore, a characterization of $\alpha_1$-metric graphs was given in~\cite{YuCh1991}: they are exactly the graphs with convex disks and one forbidden isometric subgraph \WW (see Fig.~\ref{fig:forbid}).
More recently, additional properties of $\alpha_1$-metric graphs and $\alpha_i$-metric graphs ($i \in \mathbb{N}$) were reported in~\cite{DrDu23,DrGu21,WG16}.
In~\cite{DrDu23}, we presented the first algorithmic applications of $\alpha_i$-metric graphs to classical distance problems such as diameter, radius and all eccentricities computations.
More specifically, all vertex eccentricities in an $\alpha_i$-metric graph can be approximated in linear time up to some additive term in ${\cal O}(i)$.
Furthermore, there exists a subquadratic-time algorithm for the exact computation of the radius of an $\alpha_1$-metric graph.

Gromov's notion of hyperbolicity, see~\cite{Gromov}, was arguably the start of Geometric Group Theory.
Since then, the study of $\delta$-hyperbolic graphs has become also an important topic in Metric Graph Theory~\cite{BaCh08}.
This parameter has attracted further attention in Network Science, both as a way to better classify complex networks~\cite{AbDr16,KSN16} and to explain some of their properties such as core congestion~\cite{ChDrVa17}. Many real-world networks have small hyperbolicity~\cite{AbDr16,KSN16,Vien}.  
See also~\cite{ADM14,BoChCa15,slimness,JoLo04} for other related results on hyperbolicity. 
Many different approaches have been proposed in order to upper bound the hyperbolicity in some graph classes~\cite{DrGu19,KoMo02,WuZh01}. 
In particular, chordal graphs are $1$-hyperbolic, and the chordal graphs with hyperbolicity strictly less than one can be characterized with two forbidden isometric subgraphs~\cite{BKM01}.
The $0$-hyperbolic graphs are exactly the block graphs, {\it i.e.} the graphs such that every biconnected component is a clique~\cite{Hov79}.
Characterizations of the $\frac 1 2$-hyperbolic graphs were given in~\cite{BaCh2003,CoDu14}.
Furthermore, the algorithmic applications of $\delta$-hyperbolic graphs have been studied much earlier than for $\alpha_i$-metric graphs~\cite{CDEHVX12,ChDEHV08,Chepoi2018FastEA}.
In~\cite{ChDEHV08,Chepoi2018FastEA} it was proved that all vertex eccentricities in a $\delta$-hyperbolic graph can be approximated in linear time up to some additive term in ${\cal O}(\delta)$.
Therefore, it can be argued that $\alpha_i$-metric graphs and $\delta$-hyperbolic graphs have very similar algorithmic properties.

Little is known about the relationships between $\alpha_i$-metric properties and hyperbolicity.
On one hand, the authors of~\cite{BaCh2003} observed that every $0$-hyperbolic graph must be $\alpha_0$-metric, and every $\frac 1 2$-hyperbolic graph must be $\alpha_1$-metric.
However, for any positive integer $i$, the ladder graph of height $\left\lfloor i/2\right\rfloor+1$, see Fig.~\ref{fig:ladder-c4}, is a simple example of a $1$-hyperbolic graph that is not $\alpha_i$-metric (we give other simple constructions in Section~\ref{sec:obstructions-alphai}, where we further discuss the relation between the $\alpha_i$-metric property in some classes of graphs of small hyperbolicity and the existence of isometric ladders of large height).
On the other hand, the authors of~\cite{ChChChJa22+} briefly discussed the  hyperbolicity and the $\alpha_i$-metric property for geodesic metric spaces. 
They observed that Euclidean spaces must be $\alpha_0$-metric (that is because the union of two geodesics that overlap  on one terminal segment is also a geodesic) whereas they have unbounded hyperbolicity.
However, they also noted that ``for graphs, the links between $\delta$-hyperbolic graphs and graphs with $\alpha_i$-metrics are less clear''. 

    \paragraph{Our Contributions.}

The main result in the paper is that $\alpha_i$-metric graphs are $f(i)$-hyperbolic, for some function $f$ that only depends on $i$. 
Throughout Section~\ref{sec:gal-i} we give different proofs of this result. The strongest result is proven in   Section~\ref{sec:helly}. 
\begin{thml}[see Theorem~\ref{thm:main} in the paper]
    Every $\alpha_i$-metric graph is $3(i+1)/2$-hyperbolic.
\end{thml}
In Sections~\ref{sec:papasoglu}--\ref{sec:triangles}, we compare the $\alpha_i$-metric property with some other graph parameters that are different, but functionally equivalent, to hyperbolicity.
By doing so, we obtain arguably simpler proofs, and sharper relations between the $\alpha_i$-metric property and these other parameters than by using their known relationships with hyperbolicity. 
Conversely, partial characterizations of the $\alpha_i$-metric property for graphs with small hyperbolicity are reported in Section~\ref{sec:obstructions-alphai}. 
Our current best estimate on function $f$ is linear, but it is probably not tight. 
We conjecture the right upper bound to be $f(i) = \frac{i+1}{2}$, that would be sharp.
In Section~\ref{sec:i-1}, we prove this conjecture if $i \le 1$. 
\begin{thml}[see Theorem~\ref{thm:alpha-1} in the paper]
    Every $\alpha_1$-metric graph is $1$-hyperbolic.    
\end{thml}
Some consequences of our results are further discussed in Section~\ref{sec:i-1}. 
Some preliminary results are recalled in Section~\ref{sec:prelim}.
We conclude this paper in Section~\ref{sec:ccl}, where we introduce an intriguing generalization of both $\alpha_i$-metric graphs and $\delta$-hyperbolic graphs. 

\section{Preliminaries}\label{sec:prelim}

In what follows, for every two vertices $u$ and $v$ in a graph $G=(V,E)$, we write $u \sim v$ ($u \not\sim v$, respectively) if and only if $u$ and $v$ are adjacent (nonadjacent, respectively).
For every vertex $v$ in a graph $G=(V,E)$, its open and closed \emph{neighbourhoods} are defined as $N_G(v) = \{u \in V: u \sim v\}$ and $N_G[v] = N_G(v) \cup \{v\}$.
The \emph{disk} of center $v$ and radius $r$ is defined as $D_G(v,r) = \{u \in V : d_G(u,v) \le r\}$.
The following variation of Lemma $4$ in~\cite{DrDu23} is used in our proofs:

\begin{lemma}\label{lem:three-balls}
Let $D_G(u,r_u),D_G(v,r_v),D_G(w,r_w)$ be pairwise intersecting disks of $G$.
If $G$ is $\alpha_i$-metric, then there exists a vertex $x$ such that $d_G(u,x) \leq r_u$, $d_G(v,x) \leq r_v$ and $d_G(w,x) \leq r_w + i$.
\end{lemma}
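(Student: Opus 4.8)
The plan is to reduce the three-disk statement to a sequence of two-disk intersections, exploiting the $\alpha_i$-metric property along a shortest path. First I would record the trivial case: if any two of the three centers, say $u$ and $v$, are such that $D(u,r_u) \cap D(v,r_v)$ already meets $D(w,r_w)$, we are done with room to spare. So assume otherwise. Pick a vertex $z \in D(u,r_u) \cap D(v,r_v)$ minimizing $d(z,w)$; by assumption $d(z,w) = r_w + k$ for some $k \ge 1$, and the goal is to show $k \le i$ (in fact we aim for a single vertex $x$ with the claimed bounds, so we will push $z$ one step toward $w$ if needed). Since $D(v,r_v) \cap D(w,r_w) \ne \emptyset$, there is a vertex $z'$ with $d(v,z') \le r_v$ and $d(w,z') \le r_w$; similarly a vertex $z''$ with $d(u,z'') \le r_u$ and $d(w,z'') \le r_w$.

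The key step is to take a shortest path $P$ from $z$ to $w$ and let $x$ be the neighbour of $z$ on $P$, so $d(x,w) = d(z,w) - 1$. I want to bound $d(u,x)$ and $d(v,x)$. Here is where the $\alpha_i$-metric property enters: consider a shortest $(u,z)$-path and a shortest $(z,w)$-path. If the edge $zx$ is the terminal edge shared (reading the first path from $u$ and the second as $w$ to $z$ via $x$), then we can invoke the $\alpha_i$-property on the quadruple $(u, z, x, w)$ — but the hypothesis of the property requires $z \in I(u,x)$ and $x \in I(z,w)$, which need not hold a priori. The cleaner route is to instead apply Lemma~4 of \cite{DrDu23} in its original form, or to argue directly: using that $D(u,r_u)$, $D(v,r_v)$ pairwise intersect $D(w,r_w)$, route shortest paths through the witnessing vertices $z', z''$ and use the triangle inequality together with one application of the $\alpha_i$-metric property to show that moving from $z$ one step toward $w$ costs at most $i$ total in the distances to $u$ and $v$ combined — in fact that after at most the first step, $d(u,x) \le r_u$ and $d(v,x) \le r_v$ are restored while $d(w,x) \le r_w + i$.

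Concretely, I expect the argument to run as follows. Let $x \in I(z,w)$ with $d(z,x)$ as large as possible subject to $d(u,x) \le r_u$ and $d(v,x) \le r_v$; such $x$ exists since $z$ itself qualifies. If $d(w,x) \le r_w + i$ we are done, so suppose $d(w,x) \ge r_w + i + 1$. Let $x^+$ be the next vertex on a shortest $(x,w)$-path. By maximality of $x$, either $d(u,x^+) > r_u$ or $d(v,x^+) > r_v$; say the former, so $d(u,x^+) = r_u + 1$, meaning $x \in I(u,x^+)$ and of course $x^+ \in I(x,w) \subseteq I(x, x^+) $ trivially with $x \sim x^+$. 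Now pick the witness $z''$ with $d(u,z'') \le r_u$, $d(w,z'') \le r_w$; a shortest path from $u$ to $z''$ has length $\le r_u < d(u,x^+)$, and $z'' \in D(w,r_w)$ while $x^+ \in D(w, r_w + i)$ or more. Applying the $\alpha_i$-metric property to a suitable quadruple built from $u$, $x$, $x^+$ and a vertex of $I(w,z'')$ should force $d(x^+, z'') \ge d(x^+, x) + d(x, \cdot) - i$, which combined with $d(x^+,w) \le d(x^+, z'') + d(z'', w) \le d(x^+,z'') + r_w$ yields $d(x^+, w) \le r_w + i$ after all — a contradiction with $d(w,x) \ge r_w + i + 1$ and $d(w,x^+) = d(w,x) - 1$.

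The main obstacle is verifying that the quadruples to which we apply the $\alpha_i$-metric property genuinely satisfy the betweenness hypotheses ($v \in I(u,w)$ and $w \in I(v,x)$ with $v \sim w$); this is where care is needed, since a priori a shortest $(u,z)$-path concatenated with a shortest $(z,w)$-path need not pass through $x$ on a shortest $(u,w)$-path. The fix is to choose the vertex $x$ as an extremal point on a shortest $(z,w)$-path (so $x \in I(z,w)$ automatically) and to use the \emph{maximality} of $d(z,x)$ to certify that the relevant one-step move increases the distance to $u$ (or $v$), which is exactly the betweenness condition $x \in I(u, x^+)$ needed to legitimately invoke the $\alpha_i$-metric property. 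Once the hypotheses are secured, the distance bookkeeping is a routine triangle-inequality chase, and the "$+i$" slack on the $w$-disk is precisely the defect term in the $\alpha_i$-metric inequality.
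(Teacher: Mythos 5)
Your overall strategy coincides with the paper's: choose an extremal vertex $x\in D(u,r_u)\cap D(v,r_v)$ (the paper minimizes $d(\cdot,w)$ over the intersection, you maximize the progress along a shortest path towards $w$ while staying in both disks), move one step to a neighbour $x^+$ on a shortest $(x,w)$-path, and use extremality to get, say, $d(u,x^+)>r_u$, hence $d(u,x)=r_u$ and $d(u,x^+)=r_u+1$, i.e.\ $x\in I(u,x^+)$; together with $x^+\in I(x,w)$ and $x\sim x^+$ this secures the hypotheses of the $\alpha_i$-metric property. Up to this point your plan is sound and is essentially the argument in the paper.

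However, the concluding step as you wrote it does not work. You propose to apply the $\alpha_i$-property to a quadruple built from $u,x,x^+$ and a vertex of $I(w,z'')$, obtaining a lower bound on $d(x^+,z'')$, and then to combine it with $d(x^+,w)\le d(x^+,z'')+r_w$ to conclude $d(x^+,w)\le r_w+i$. The inequalities do not chain: a lower bound on $d(x^+,z'')$ together with an upper bound on $d(x^+,w)$ \emph{in terms of} $d(x^+,z'')$ cannot produce an upper bound on $d(x^+,w)$; and even the desired conclusion $d(x^+,w)\le r_w+i$ would not be a contradiction, since under your assumption $d(x,w)\ge r_w+i+1$ one only has $d(x^+,w)=d(x,w)-1\ge r_w+i$. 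The repair is immediate with the objects you already introduced: apply the $\alpha_i$-property directly to the quadruple $(u,x,x^+,w)$, whose hypotheses are exactly what your maximality argument established, to get $d(u,w)\ge d(u,x)+d(x,w)-i\ge r_u+(r_w+i+1)-i=r_u+r_w+1$, and contrast this with $d(u,w)\le d(u,z'')+d(z'',w)\le r_u+r_w$, which holds because $z''$ witnesses $D(u,r_u)\cap D(w,r_w)\ne\emptyset$. This contradiction finishes the proof and is precisely the computation in the paper's proof of Lemma~\ref{lem:three-balls}.
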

\begin{proof}
Let $x \in D_G(u,r_u) \cap D_G(v,r_v)$ be such that $d_G(w,x)$ is minimized.
Suppose, by way of contradiction, that  $d_G(x,w) > r_w + i$, and let $y \in N(x)$ be on a shortest $(x,w)$-path.
By minimality of $d_G(x,w)$, we must have $d_G(u,y) > r_u$ or $d_G(v,y) > r_v$.
By symmetry, we may assume that $d_G(u,y) > r_u$.
Since $r_u = d_G(u,x) < d_G(u,y)$, $d_G(w,y) < d_G(w,x)$, and $x \sim y$, by the $\alpha_i$-metric property applied to $u,x,y,w$ it implies $d_G(u,w) \geq d_G(u,x) + d_G(x,w) - i > r_u + (r_w+i) - i = r_u+r_w \ge d_G(u,w)$, giving a contradiction.\qed
\end{proof}

Let $u$ and $v$ be arbitrary vertices in $G$.
Recall that $I_G(u,v)$ contains every vertex on a shortest $(u,v)$-path.
Let also $I_G^o(u,v)=I_G(u,v)\setminus \{u,v\}$.
A set of vertices $S$ is called \emph{convex} if $I_G(x,y) \subseteq S$ for any two vertices $x,y \in S$.
For every integer $k$ such that $0 \le k \le d_G(u,v)$, we define the \emph{slice} $S_k(u,v,G) = \{x \in I_G(u,v) : d_G(u,x) = k\}$.
Let $\kappa_G(u,v)$ denote the maximum diameter of a slice between $u$ and $v$: $\kappa_G(u,v) = \max_{0 \le k \le d_G(u,v)}\max\{d_G(x,y): x,y \in S_k(u,v,G)\}$.
We define the \emph{interval thinness} of $G$ as $\kappa(G) = \max\{\kappa_G(u,v) : u,v \in V\}$.

\begin{lemma}[see Lemma 1 in~\cite{DrDu23}]\label{lm:auxiliary-GD}
Let $G$ be an $\alpha_i$-metric graph, and let $u,v,x,y$ be vertices such that $x\in I_G(u,v)$, $d_G(u,x)=d_G(u,y)$, and $d_G(v,y)\le d_G(v,x)+k$ for some nonnegative integer $k$. Then, $d_G(x,y)\le k+i+2$.
\end{lemma}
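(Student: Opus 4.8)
The plan is to prove Lemma~\ref{lm:auxiliary-GD} by inducting on the quantity $d(v,x) - d(v,y)$ (or equivalently, by first reducing to the case where $d(v,y) = d(v,x) + k$ exactly, i.e.\ where the bound on $d(v,y)$ is tight), so that I only have to understand one ``extremal'' configuration and then bootstrap. Write $m = d(u,x) = d(u,y)$. First I would handle the base case, roughly $k = 0$: here $d(v,y) \le d(v,x)$, and I want to conclude $d(x,y) \le i+2$. The idea is to exploit $x \in I(u,v)$: pick a neighbour $w$ of $v$ on a shortest $(v,x)$-path, so $w \in I(v,x) \subseteq I(u,v)$ and $x \in I(u,w)$ with $w \in I(x,v)$ adjacent to $v$-side; then apply the $\alpha_i$-metric property to the quadruple formed by $u$, the edge near $v$, and $y$ to force $d(u,y)$ to be large unless $y$ is close to $x$. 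The cleaner route is probably to invoke Lemma~\ref{lem:three-balls}: the disks $D(x, \lceil d(x,y)/2\rceil + O(1))$, $D(y, \dots)$, and one centered near $v$ (or $u$) pairwise intersect, producing a vertex whose existence contradicts $d(u,x) = d(u,y)$ being on the $u$--$v$ geodesic unless $d(x,y)$ is already small.

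For the inductive step I would take a neighbour $y'$ of $y$ on a shortest $(v,y)$-path, so $d(v,y') = d(v,y) - 1$ and $d(u,y') \ge m - 1$. If $d(u,y') = m$ as well, or if I can move $y$ one step toward the slice $S_m(u,v)$ while only decreasing the relevant parameter, I recurse and pick up one extra unit in the bound each time --- but since each step decreases $d(v,x) - d(v,y)$ by one, after $k$ steps I land in the base case, giving $d(x,y) \le k + (i+2)$ by the triangle inequality accumulated along the chain $y, y', y'', \dots$. The subtlety is that moving toward $v$ changes $d(u,\cdot)$, so the invariant I actually maintain is something like: there is a vertex $x_k \in I(u,v)$ with $d(u,x_k) = d(u,y)$ and $d(v, x_k) \le d(v,y)$, and $x$ is within $k$ of $x_k$; then the base case bounds $d(x_k, y)$.

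The main obstacle I anticipate is controlling what happens to the ``$u$-side'' distance when I perturb $y$ toward $v$: naively, $d(u,y')$ could drop below $m$, breaking the hypothesis $d(u,x) = d(u,y')$ needed to reapply the lemma, and I have to argue that in that case $y'$ is already very close to the slice $S_{m-1}(u,v)$ (again via an $\alpha_i$-metric argument on the geodesic through $u$) so that the defect stays $O(i)$ and doesn't accumulate uncontrollably. A fallback approach avoiding induction altogether: let $z \in I(v,x)$ with $d(v,z) = d(v,y)$ (so $z$ is in the same slice as $y$ relative to $v$), bound $d(x,z) \le k$ trivially since $z$ lies on a shortest $(v,x)$-path, and then bound $d(y,z)$ using the fact that $y$ and $z$ are near the same slice $S_m(u,v)$ --- this is exactly where the interval thinness $\kappa(G)$ and Lemma~\ref{lem:three-balls} enter, giving $d(y,z) = O(i)$, and the triangle inequality $d(x,y) \le d(x,z) + d(z,y) \le k + O(i)$ finishes it, with the constant tightened to exactly $i+2$ by a careful single application of the $\alpha_i$-metric property rather than the lossy three-disk lemma.
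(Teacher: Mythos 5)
This lemma is quoted in the paper from~\cite{DrDu23} without proof, so there is no in-paper argument to compare against; judging your plan on its own terms, it has genuine gaps. First, a structural fact you never use and which your fallback contradicts: since $x\in I(u,v)$ and $d(u,y)=d(u,x)$, the triangle inequality forces $d(v,y)\ge d(u,v)-d(u,y)=d(v,x)$. Hence your ``base case'' $k=0$ forces $d(v,y)=d(v,x)$, i.e.\ $y\in I(u,v)$, and it follows at once from Lemma~\ref{lem:thinness} --- no $\alpha_i$ quadruple or three-disk argument is needed (and the ones you sketch are not verified: the betweenness hypotheses of the $\alpha_i$-property, such as ``$x\in I(y,x')$'', are exactly what is not available). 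The same fact breaks your fallback: a vertex $z\in I(v,x)$ with $d(v,z)=d(v,y)$ exists only when $d(v,y)=d(v,x)$, so the construction is vacuous outside the trivial case. If you instead take $z$ on a shortest $(v,y)$-path with $d(v,z)=d(v,x)$ (so $d(y,z)\le k$), then $z$ need not lie in $I(u,v)$ and $d(u,z)$ can be anywhere in $[d(u,x),d(u,x)+k]$, so bounding $d(x,z)$ is the original problem again; and the route through Lemma~\ref{lem:three-balls} plus interval thinness that you invoke yields only $d(x,y)\le k+2i+1$, which is strictly weaker than $k+i+2$ for $i\ge 2$. The ``careful single application of the $\alpha_i$-metric property'' that is supposed to tighten the constant is precisely the missing content.

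The inductive route has the same problem concentrated in its step. Stepping $y\to y'$ along a shortest $(v,y)$-path, three cases occur. If $d(u,y')=d(u,y)$ you recurse with $k-1$ and all is well; if $d(u,y')=d(u,y)-1$ one can shift the reference vertex to the neighbour $x^-$ of $x$ in $I(u,x)$, and the parameter drops by $2$, paying for the two extra edges. But if $d(u,y')=d(u,y)+1$, the comparison vertex must move to $x^+\in I(x,v)\cap N(x)$ and the parameter does \emph{not} decrease while the accumulated path length grows by $2$, so the induction does not close; this is exactly the case your plan flags (``the $u$-side distance when I perturb $y$'') and then waves away with ``the defect stays $O(i)$''. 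That remark is both unsubstantiated (no $\alpha_i$ application with verified hypotheses is given for it; the natural one only shows $d(v,y)\le d(v,x)+i$, which bounds $k$ but not $d(x,y)$) and insufficient for the claimed constant: an $O(i)$ loss, even incurred once, gives $k+O(i)$ with an unspecified constant rather than $k+i+2$. So as written the proposal is a plan whose hard steps are left as hopes, and the steps it does commit to either misstate the geometry or deliver a weaker bound than the lemma asserts.
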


\begin{lemma}[see Lemma 2 in~\cite{DrDu23}]\label{lem:thinness}
If $G=(V,E)$ is an $\alpha_i$-metric graph, then its interval thinness is at most $i+1$.
\end{lemma}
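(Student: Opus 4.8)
\medskip
\noindent\textbf{Sketch of a proof.}
The plan is to reduce the claim to a diameter bound for slices: since $\kappa(G)=\max\{\kappa(u,v):u,v\in V\}$, it suffices to fix vertices $u,v$, an index $0\le k\le d(u,v)$ and two vertices $x,y\in S_k(u,v)$, and to prove that $t:=d(x,y)\le i+1$. One may assume $t\ge1$ and $0<k<d(u,v)$, the extreme slices $S_0(u,v)=\{u\}$ and $S_{d(u,v)}(u,v)=\{v\}$ being singletons. By definition of a slice, $d(u,x)=d(u,y)=k$, $d(v,x)=d(v,y)=d(u,v)-k$, and both $x$ and $y$ lie on shortest $(u,v)$-paths.

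The engine of the proof will be a dichotomy for the neighbours of a slice vertex along a shortest path to another slice vertex. Fix a shortest $(x,y)$-path $x=p_0,p_1,\dots,p_t=y$; then $p_1\sim x$, $p_1\in I(x,y)$, $d(p_1,y)=t-1$, and $d(u,p_1)\in\{k-1,k,k+1\}$. If $d(u,p_1)=k+1$ then $x\in I(u,p_1)$, so the quadruple $(u,x,p_1,y)$ satisfies the hypotheses of the $\alpha_i$-metric property ($x\in I(u,p_1)$, $p_1\in I(x,y)$, $x\sim p_1$), which gives $k=d(u,y)\ge d(u,x)+1+d(p_1,y)-i=k+t-i$, hence $t\le i$. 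Symmetrically, if $d(v,p_1)=d(u,v)-k+1$, applying the $\alpha_i$-metric property to $(v,x,p_1,y)$ gives $t\le i$. Otherwise $d(u,p_1)\le k$ and $d(v,p_1)\le d(u,v)-k$, so $d(u,v)\le d(u,p_1)+d(p_1,v)\le d(u,v)$ forces $p_1\in S_k(u,v)$. So: \emph{either $t\le i$, or $p_1\in S_k(u,v)$}, and likewise for $p_{t-1}$.

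Assuming $t\ge i+2$ for contradiction, I would then apply this dichotomy repeatedly, to $(p_1,y)$, then $(p_2,y)$, and so on (each such pair lies in $S_k(u,v)$ and is at distance $>i$), obtaining that $p_0,\dots,p_{t-i}$ all lie in $S_k(u,v)$, and symmetrically $p_i,\dots,p_t$; when $t$ is not too large compared to $i$, the whole shortest path lies inside the slice. It then remains to rule out the configuration in which a shortest $(x,y)$-path lies entirely in $S_k(u,v)$, where the escape argument above is useless. The plan for this residual case is to bring in a neighbouring slice: take a predecessor $p_1^-\in S_{k-1}(u,v)$ of $p_1$ on a shortest $(u,p_1)$-path. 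Since the second vertex of \emph{every} shortest $(p_1,y)$-path lies in $S_k(u,v)$, $p_1^-$ is not on a shortest $(p_1,y)$-path, so $d(p_1^-,y)\in\{t-1,t\}$; if $d(p_1^-,y)=t$, then $p_1\in I(p_1^-,y)$ and the $\alpha_i$-metric property applied to $(u,p_1^-,p_1,y)$ yields $k=d(u,y)\ge d(u,p_1^-)+1+d(p_1,y)-i=(k-1)+1+(t-1)-i$, i.e.\ $t\le i+1$, a contradiction; the remaining possibility $d(p_1^-,y)=t-1$ should be driven one further step towards $u$ (or handled on the $v$-side, or through the extremal choice $t=\kappa(G)$).

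I expect this residual case --- a shortest $(x,y)$-path staying inside the slice --- to be the main obstacle, and in particular making the treatment uniform in $i$: the naive peeling above leaves a possibly uncontrolled middle portion of the path when $i$ is large, and the passage to an adjacent slice costs exactly the unit that turns $i$ into $i+1$. A cruder argument would only give the weaker bound $i+2$; note that this weaker bound is immediate from Lemma~\ref{lm:auxiliary-GD} applied with its parameter equal to $0$ (taking there $x,y\in S_k(u,v)$, so that $x\in I(u,v)$, $d(u,x)=d(u,y)$ and $d(v,y)\le d(v,x)$, whence $d(x,y)\le i+2$).
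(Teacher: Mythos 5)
Your preparatory steps check out: the reduction to a fixed slice, the dichotomy for a neighbour $p_1$ of $x$ on a shortest $(x,y)$-path (either an application of the $\alpha_i$-property to $(u,x,p_1,y)$ or $(v,x,p_1,y)$ forces $t\le i$, or $p_1\in S_k(u,v)$), the next peeling step giving $t\le i+1$ in the escape case, the observation that under $t\ge i+2$ every neighbour of $p_1$ towards $y$ stays in $S_k(u,v)$, the computation showing that $d(p_1^-,y)=t$ yields $t\le i+1$ via $(u,p_1^-,p_1,y)$, and the remark that Lemma~\ref{lm:auxiliary-GD} with parameter $0$ already gives the weaker bound $i+2$ are all correct. (For the record, the present paper does not prove Lemma~\ref{lem:thinness} at all; it imports it from \cite{DrDu23}, so there is no in-paper argument to compare with.)

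However, the proposal is not a proof of the bound $i+1$: the case analysis bottoms out exactly where the improvement from $i+2$ to $i+1$ has to be earned, namely the subcase $d(p_1^-,y)=t-1$ (and its $v$-side analogue $d(p_1^+,y)=t-1$), and you leave it with three unsubstantiated options. ``Driving one further step towards $u$'' reproduces the same two possibilities one slice lower for the pair $(p_1^-,y)$, which no longer lies in a common slice of $(u,v)$, so neither the dichotomy nor the extremal choice $t=\kappa(G)$ applies to it, and nothing in the sketch forces this descent to terminate with a contradiction; handling it ``on the $v$-side'' only produces the symmetric stuck configuration $d(p_1^+,y)=t-1$. A secondary slip: the peeling covers the whole path only when $t\ge 2i-1$, i.e.\ when $t$ is \emph{large} relative to $i$ (your sentence states the opposite), so for $i+2\le t\le 2i-2$ the middle of the path is uncontrolled --- though this is not the fatal point, since your $p_1^-$ step needs only $p_1,p_2\in S_k(u,v)$. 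As it stands, the argument establishes $i+1$ only in favourable configurations and $i+2$ in general; the key idea needed for the sharp bound $i+1$ (which is tight, cf.\ the discussion of Fig.~\ref{fig-thinness-sharp}) is missing.
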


The bound of Lemma~\ref{lem:thinness} is sharp, as shown in Fig.~\ref{fig:leanness-tight-bound}.

\medskip
A \emph{geodesic triangle} $\Delta_G(u,v,w) = P_G(u,v) \cup P_G(v,w) \cup P_G(w,u)$ is the union of a shortest $(u,v)$-path $P_G(u,v)$, a shortest $(v,w)$-path $P_G(v,w)$ and a shortest $(w,u)$-path $P_G(w,u)$. Note that $P_G(u,v),P_G(v,w),P_G(w,u)$ are called the \emph{sides} of the triangle, and they may not be disjoint. 

A \emph{metric triangle} is a triple $u v w$ such that $I_G^o(u,v)$, $I_G^o(v,w)$ and $I_G^o(w,u)$ are pairwise disjoint.
The \emph{type} of this triangle is the triple $(d_G(u,v),d_G(v,w),d_G(w,u))$.
Let also $\max\{d_G(u,v),d_G(v,w),d_G(w,u)\}$ be the \emph{maximum side-length} of the triangle. 
Finally, given a triple $u,v,w \in V$, a \emph{quasi-median} is a metric triangle $u'v'w'$ such that we have:
\begin{itemize}
    \item $d_G(u,v) = d_G(u,u') + d_G(u',v') + d_G(v',v)$; 
    \item $d_G(v,w) = d_G(v,v') + d_G(v',w') + d_G(w',w)$;
    \item $d_G(w,u) = d_G(w,w') + d_G(w',u') + d_G(u',u)$.
\end{itemize}
In any connected graph, every triple has at least one quasi median ({\it e.g.}, see~\cite{ChDEHV08}).

\medskip
Other notations and terminology are locally defined at appropriate places throughout the paper.
Furthermore, if $G$ is clear from the context then it is removed from all our notations.

\begin{figure}[!h]
    \centering
    \resizebox{2.5in}{!}{
    \begin{tikzpicture}
        \node[circle,fill=black,inner sep=0pt,minimum size=5pt, label=left:{$u$}] at (-4,0) {};
        \node[circle,fill=black,inner sep=0pt,minimum size=5pt, label=right:{$v$}] at (4,0) {};
        \node[circle,fill=black,inner sep=0pt,minimum size=5pt, label=above:{$x$}] at (0,2) {};
        \node[circle,fill=black,inner sep=0pt,minimum size=5pt, label=below:{$y$}] at (0,-2) {};
        \node[circle,fill=black,inner sep=0pt,minimum size=5pt] at (-3.5,.25) {};
        \node[circle,fill=black,inner sep=0pt,minimum size=5pt] at (-3.5,-.25) {};
        \node[circle,fill=black,inner sep=0pt,minimum size=5pt] at (-3,.5) {};
        \node[circle,fill=black,inner sep=0pt,minimum size=5pt] at (-3,0) {};
        \node[circle,fill=black,inner sep=0pt,minimum size=5pt] at (-3,-.5) {};
        \node[circle,fill=black,inner sep=0pt,minimum size=5pt] at (-2.5,.75) {};
        \node[circle,fill=black,inner sep=0pt,minimum size=5pt] at (-2.5,.25) {};
        \node[circle,fill=black,inner sep=0pt,minimum size=5pt] at (-2.5,-.25) {};
        \node[circle,fill=black,inner sep=0pt,minimum size=5pt] at (-2.5,-.75) {};
        \node[circle,fill=black,inner sep=0pt,minimum size=5pt] at (-2,1) {};
        \node[circle,fill=black,inner sep=0pt,minimum size=5pt] at (-2,.5) {};
        \node[circle,fill=black,inner sep=0pt,minimum size=5pt] at (-2,0) {};
        \node[circle,fill=black,inner sep=0pt,minimum size=5pt] at (-2,-.5) {};
        \node[circle,fill=black,inner sep=0pt,minimum size=5pt] at (-2,-1) {};
        \node[circle,fill=black,inner sep=0pt,minimum size=5pt] at (3.5,.25) {};
        \node[circle,fill=black,inner sep=0pt,minimum size=5pt] at (3.5,-.25) {};
        \node[circle,fill=black,inner sep=0pt,minimum size=5pt] at (3,.5) {};
        \node[circle,fill=black,inner sep=0pt,minimum size=5pt] at (3,0) {};
        \node[circle,fill=black,inner sep=0pt,minimum size=5pt] at (3,-.5) {};
        \node[circle,fill=black,inner sep=0pt,minimum size=5pt] at (2.5,.75) {};
        \node[circle,fill=black,inner sep=0pt,minimum size=5pt] at (2.5,.25) {};
        \node[circle,fill=black,inner sep=0pt,minimum size=5pt] at (2.5,-.25) {};
        \node[circle,fill=black,inner sep=0pt,minimum size=5pt] at (2.5,-.75) {};
        \node[circle,fill=black,inner sep=0pt,minimum size=5pt] at (2,1) {};
        \node[circle,fill=black,inner sep=0pt,minimum size=5pt] at (2,.5) {};
        \node[circle,fill=black,inner sep=0pt,minimum size=5pt] at (2,0) {};
        \node[circle,fill=black,inner sep=0pt,minimum size=5pt] at (2,-.5) {};
        \node[circle,fill=black,inner sep=0pt,minimum size=5pt] at (2,-1) {};
        \node[circle,fill=black,inner sep=0pt,minimum size=5pt] at (0,1.5) {};
        \node[circle,fill=black,inner sep=0pt,minimum size=5pt] at (0,1) {};
        \node[circle,fill=black,inner sep=0pt,minimum size=5pt] at (0,.5) {};
        \node[circle,fill=black,inner sep=0pt,minimum size=5pt] at (0,-1.5) {};
        \node[circle,fill=black,inner sep=0pt,minimum size=5pt] at (0,-1) {};
        \node[circle,fill=black,inner sep=0pt,minimum size=5pt] at (-.5,-1.75) {};
        \node[circle,fill=black,inner sep=0pt,minimum size=5pt] at (-.5,-1.25) {};
        \node[circle,fill=black,inner sep=0pt,minimum size=5pt] at (-.5,-.75) {};
        \node[circle,fill=black,inner sep=0pt,minimum size=5pt] at (.5,1.75) {};
        \node[circle,fill=black,inner sep=0pt,minimum size=5pt] at (.5,1.25) {};
        \node[circle,fill=black,inner sep=0pt,minimum size=5pt] at (.5,.75) {};
        \node[circle,fill=black,inner sep=0pt,minimum size=5pt] at (-.5,1.75) {};
        \node[circle,fill=black,inner sep=0pt,minimum size=5pt] at (-.5,1.25) {};
        \node[circle,fill=black,inner sep=0pt,minimum size=5pt] at (.5,-1.75) {};
        \node[circle,fill=black,inner sep=0pt,minimum size=5pt] at (.5,-1.25) {};
        \draw (-4,0) -- (-2,1); \draw[dashed] (-2,1) -- (-.5,1.75); \draw (-.5,1.75) -- (0,2);
        \draw (-4,0) -- (-2,-1); \draw[dashed] (-2,-1) -- (-.5,-1.75); \draw (-.5,-1.75) -- (0,-2);
        \draw (4,0) -- (2,1); \draw[dashed] (2,1) -- (.5,1.75); \draw (.5,1.75) -- (0,2);
        \draw (4,0) -- (2,-1); \draw[dashed] (2,-1) -- (.5,-1.75); \draw (.5,-1.75) -- (0,-2);
        \draw (-3.5,-.25) -- (-3.5,.25); \draw (-3,-.5) -- (-3,.5); \draw (-2.5,-.75) -- (-2.5,.75); \draw (-2,-1) -- (-2,1);
        \draw (3.5,-.25) -- (3.5,.25); \draw (3,-.5) -- (3,.5); \draw (2.5,-.75) -- (2.5,.75); \draw (2,-1) -- (2,1);
        \draw (-.5,1.75) -- (-.5,1.25); \draw[dashed] (-.5,1.25) -- (-.5,-.75); \draw (-.5,-.75) -- (-.5,-1.75);
        \draw (.5,1.75) -- (.5,.75); \draw[dashed] (.5,.75) -- (.5,-1.25); \draw (.5,-1.25) -- (.5,-1.75);
        \draw (0,2) -- (0,.5); \draw[dashed] (0,.5) -- (0,-1); \draw (0,-1) -- (0,-2);
        \draw (-3.5,-.25) -- (-2,.5); \draw[dashed] (-2,.5) -- (-.5,1.25); \draw (-.5,1.25) -- (.5,1.75);
        \draw (-3,-.5) -- (-2,0); \draw[dashed] (-2,0) -- (0,1); \draw (0,1) -- (.5,1.25); \draw[dashed] (.5,1.25) -- (1,1.5);
        \draw (-2.5,-.75) -- (-2,-.5); \draw[dashed] (-2,-.5) -- (0,.5); \draw (0,.5) -- (.5,.75); \draw[dashed] (.5,.75) -- (1.5,1.25);
        \draw[dashed] (-2,-1) -- (2,1); 
        \draw[dashed] (-1.5,-1.25) -- (2,.5); \draw (2,.5) -- (2.5,.75);
        \draw[dashed] (-1,-1.5) -- (-.5,-1.25); \draw (-.5,-1.25) -- (0,-1); \draw[dashed] (0,-1) -- (2,0); \draw (2,0) -- (3,.5);
        \draw (-.5,-1.75) -- (.5,-1.25); \draw[dashed] (.5,-1.25) -- (2,-.5); \draw (2,-.5) -- (3.5,.25);
        \draw (3.5,-.25) -- (2,.5); \draw[dashed] (2,.5) -- (.5,1.25); \draw (.5,1.25) -- (-.5,1.75);
        \draw (3,-.5) -- (2,0); \draw[dashed] (2,0) -- (.5,.75); \draw (.5,.75) -- (-.5,1.25); \draw[dashed] (-.5,1.25) -- (-1,1.5);
        \draw (2.5,-.75) -- (2,-.5); \draw[dashed] (2,-.5) -- (0,.5); \draw (0,.5) -- (-.5,.75); \draw[dashed] (-.5,.75) -- (-1.5,1.25);
        \draw[dashed] (2,-1) -- (-2,1); 
        \draw[dashed] (1.5,-1.25) -- (-2,.5); \draw (-2,.5) -- (-2.5,.75);
        \draw[dashed] (1,-1.5) -- (.5,-1.25); \draw (.5,-1.25) -- (-.5,-.75); \draw[dashed] (-.5,-.75) -- (-2,0); \draw (-2,0) -- (-3,.5);
        \draw (.5,-1.75) -- (-.5,-1.25); \draw[dashed] (-.5,-1.25) -- (-2,-.5); \draw (-2,-.5) -- (-3.5,.25);
    \end{tikzpicture}}
    \vspace*{-2mm}
    \caption{The triangular $(i+2,i+2)$-grid is $\alpha_i$-metric. As $x,y \in S_{i+1}(u,v)$, its interval thinness is at least $d(x,y) = i+1$.}
    \label{fig:leanness-tight-bound}
    \vspace*{-10mm}
\end{figure}

\section{Hyperbolicity of $\alpha_i$-metric graphs for arbitrary $i \ge 0$}\label{sec:gal-i}

Our main result in this section is that $\alpha_i$-metric graphs are $\frac 3 2(i+1)$-hyperbolic.
This result is reported in Section~\ref{sec:helly}.
In Sections~\ref{sec:papasoglu},~\ref{sec:cop-robber}, and~\ref{sec:triangles}, we prove bounds on some parameters that are related to hyperbolicity.
Our bounds are better than the ones that follow directly from the known relations between these parameters and hyperbolicity; see Table~\ref{tab:bounds}.
Furthermore, we present some properties of $\alpha_i$-metric graphs that do not follow from their bounded hyperbolicity. 
Finally, some results on the $\alpha_i$-metric property for graphs with small hyperbolicity are discussed in Section~\ref{sec:obstructions-alphai}. 

\begin{table}[!h]
    \vspace*{-3mm}
    \centering
    \resizebox{6in}{!}{
    \begin{tabular}{|c||c|c|c|c| }
        \hline
        Parameter & Bound from hyperbolicity $\delta$ & Application to $\delta = \frac 3 2 (i+1)$ & \textcolor{red}{Our bound} & \textcolor{red}{Reference} \\
        \hline
        \hline 
        &&&&\\
        Interval thinness of $\Sigma(G)$ & $\le 4\delta+4$ (folklore) & $6i+10$ & \textcolor{red}{$2i+12$} & \textcolor{red}{Lemma~\ref{lem:thinness-subdivision}}   \\ 
        &&&&\\
        \hline 
        &&&&\\
        $(s, s')^*$-dismantlability &  $s' = \lceil{s/2}\rceil + 2\delta$~\cite{ChChNiVa11}& $s' = \lceil{s/2}\rceil + 3i+3$ & \textcolor{red}{$s' = \lceil{s/2}\rceil + 2i+1$} & \textcolor{red}{Lemma~\ref{(s,s')-dism}} \\
        &&&&\\
        \hline 
        &&&&\\
        Thinness of geodesic triangles & $\le 4\delta$~\cite{Fast-appr-hyperb} & $6(i+1)$ & \textcolor{red}{$3(i+1)$} & \textcolor{red}{Theorem~\ref{thm:thinness}} \\
        &&&&\\
        \hline
    \end{tabular}}
    \vspace*{2mm}
    \caption{Bounds on some parameters for $\alpha_i$-metric graphs.
    }
    \label{tab:bounds}
    \vspace*{-5mm}
\end{table}

    \subsection{Using Interval Thinness}\label{sec:papasoglu}

It is easy to prove that every $\delta$-hyperbolic graph has interval thinness at most $2\delta$.
Conversely, odd cycles are examples of graphs with interval thinness equal to zero (they are so-called ``geodetic graphs'') but unbounded hyperbolicity.
Therefore, bounded interval thinness does not imply bounded hyperbolicity.
However, let the \emph{$1$-subdivision graph} $\Sigma(G)$ of a graph $G$ be obtained by replacing each edge $e=uv$ by a path $[u,e,v]$ of length two.
Papasoglu~\cite{Pap95} proved that the hyperbolicity of $G$ is at most doubly exponential in the interval thinness of $\Sigma(G)$.

By Lemma~\ref{lem:thinness}, the interval thinness of $\alpha_i$-metric graphs is at most $i+1$.
Therefore, in order to prove that $\alpha_i$-metric graphs are $f(i)$-hyperbolic, for some arbitrary $f$, it would be sufficient to prove that their $1$-subdivision graphs are $\alpha_{g(i)}$-metric, for some $g$.
Unfortunately, this is false, even for $i=1$, as shown in Fig.~\ref{fig:unbounded-alpha-i}.
Our counter-example is a triangular $(2,n)$-grid graph $G$.
This graph is $\alpha_1$-metric (that follows from the characterization proved in~\cite{YuCh1991}, this result is restated in Theorem~\ref{th:charact}).
However, with the 4-tuple $u,e,w,x$ we have $e \in I_{\Sigma(G)}(u,w)$ and $w \in I_{\Sigma(G)}(e,x)$, $d_{\Sigma(G)}(u,e) = 2n-1$, $d_{\Sigma(G)}(e,x) = 2n-1$, but $d_{\Sigma(G)}(u,x) = 2$. 
Hence, for $\Sigma(G)$ to be $\alpha_i$-metric, $i$ should be at least $d_{\Sigma(G)}(u,e) + d_{\Sigma(G)}(e,x) - d_{\Sigma(G)}(u,x) = 4n-4$.

\begin{figure}[!ht]
    \centering
    \begin{tikzpicture}

        \node[circle,fill=black,inner sep=0pt,minimum size=5pt, label=above:{$u$}] at (-1,0) {};
        \node[circle,fill=black,inner sep=0pt,minimum size=5pt, label=above:{$x$}] at (1,0) {};

        \node[circle,fill=black,inner sep=0pt,minimum size=5pt, label=above:{}] at (-1,-1) {};
        \node[circle,fill=black,inner sep=0pt,minimum size=5pt, label=above:{}] at (1,-1) {};

        \node[circle,fill=black,inner sep=0pt,minimum size=5pt, label=above:{}] at (-1,-3) {};
        \node[circle,fill=black,inner sep=0pt,minimum size=5pt, label=above:{}] at (1,-3) {};
        \node[circle,fill=black,inner sep=0pt,minimum size=5pt, label=above:{}] at (-1,-4) {};
        \node[circle,fill=black,inner sep=0pt,minimum size=5pt, label=above:{}] at (1,-4) {};
        \node[circle,fill=black,inner sep=0pt,minimum size=5pt, label=below:{$v$}] at (-1,-5) {};
        \node[circle,fill=black,inner sep=0pt,minimum size=5pt, label=below:{$w$}] at (1,-5) {};

        \node[label=below:{$e=vw$}] at (0,-5) {};

        \draw (-1,0) -- (1,0) -- (1,-1) -- (-1,-1) -- (-1,0);
        \draw (-1,-1) -- (1,0);
        \draw[dashed] (-1,-1) -- (-1,-3);
        \draw[dashed] (1,-1) -- (1,-3);
        \draw (1,-3) -- (1,-5) -- (-1,-5) -- (-1,-3) -- (1,-3); 
        \draw (1,-3) -- (-1,-4) -- (1,-4) -- (-1,-5);

    \end{tikzpicture}
    \caption{The triangular $(2,n)$-grid is $\alpha_1$-metric, but its $1$-subdivision graph is not $\alpha_i$-metric for any $i \le 4n-5$.}
    \label{fig:unbounded-alpha-i}
    \vspace*{-5mm}
\end{figure}

We now give a direct proof that $1$-subdivision graphs of $\alpha_i$-metric graphs have interval thinness at most linear in $i$.

\begin{lemma}\label{lem:thinness-subdivision}
    If $G=(V,E)$ is an $\alpha_i$-metric graph, then $\kappa(\Sigma(G)) \le 2i+12$.
\end{lemma}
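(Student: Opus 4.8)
The plan is to exploit two facts about $\Sigma(G)$. First, $\Sigma(G)$ is bipartite with parts $V$ and $E$, so every slice $S_k(p,q,\Sigma(G))$ lies entirely in $V$ or entirely among the subdivision vertices. Second, $\Sigma(G)$-distances are controlled by $G$-distances: $d_{\Sigma(G)}(u,v)=2d_G(u,v)$ for $u,v\in V$; $d_{\Sigma(G)}(e,v)=1+2\min\{d_G(a,v):a\in\{u,w\}\}$ for a subdivision vertex $e$ of an edge $uw$ and $v\in V$; and $d_{\Sigma(G)}(e,f)=2+2\min\{d_G(a,b):a\in\{u,w\},\,b\in\{u',w'\}\}$ for distinct subdivision vertices $e,f$ of edges $uw,u'w'$. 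First I would record these identities, which let me translate membership of a vertex in a slice of $\Sigma(G)$ into membership in one or two slices of $G$, up to a shift of one level, after which the whole problem reduces to estimating $d_G(s,t)$ for suitable vertices $s,t$ of $G$ using Lemma~\ref{lem:thinness} ($\kappa(G)\le i+1$) and Lemma~\ref{lm:auxiliary-GD}.

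I would then split into three cases according to whether $p,q$ are original or subdivision vertices. If $p,q\in V$: for even $k=2\ell$, $S_k(p,q,\Sigma(G))=S_\ell(p,q,G)$, of diameter at most $2(i+1)$ by Lemma~\ref{lem:thinness}; for odd $k$ it is the set of (subdivision vertices of) edges of $G$ joining two consecutive slices of $I_G(p,q)$, of diameter at most $2(i+1)+2$. If exactly one of $p,q$, say $q$, is a subdivision vertex of an edge $x_0y_0$, then every shortest $(p,q)$-path in $\Sigma(G)$ passes through $x_0$ or through $y_0$, so $S_k(p,q,\Sigma(G))\subseteq S_k(p,x_0,\Sigma(G))\cup S_k(p,y_0,\Sigma(G))$; translating to $G$, two slice vertices give a level-$\ell$ vertex $s$ on $I_G(p,x_0)$ and a level-$\ell$ vertex $t$ on $I_G(p,y_0)$ (or the endpoints of the corresponding edges of $G$, for odd $k$), and since $d_G(x_0,y_0)=1$, Lemma~\ref{lm:auxiliary-GD} with parameter $1$ gives $d_G(s,t)\le i+3$; hence the distance in $\Sigma(G)$ between the two slice vertices is at most $2(i+3)+2=2i+8$.

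The core case is $p=e_0$, $q=f_0$ with $e_0=x_0y_0$, $f_0=x_1y_1$ subdivision vertices and $d_{\Sigma(G)}(e_0,f_0)=2m+2$; I may assume $m\ge 1$ (otherwise $e_0$ and $f_0$ coincide or share a vertex and every slice is a singleton). A shortest $(e_0,f_0)$-path passes through a pair $(z,z')\in\{x_0,y_0\}\times\{x_1,y_1\}$ with $d_G(z,z')=m=\min\{d_G(a,b):a\in\{x_0,y_0\},\,b\in\{x_1,y_1\}\}$, so each slice of $I_{\Sigma(G)}(e_0,f_0)$ is a union, over such ``active'' pairs, of level-shifted slices of $I_{\Sigma(G)}(z,z')$ with $z,z'\in V$ and $d_G(z,z')=m$. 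Two slice vertices lying over active pairs with a common first coordinate $z$ both belong to a slice of $I_{\Sigma(G)}(z,f_0)$, which reduces to the one-subdivision case just treated. The only genuinely new configuration is two slice vertices $s,t$ lying over $(x_0,x_1)$ and $(y_0,y_1)$ respectively, with $x_0\sim y_0$, $x_1\sim y_1$, $d_G(x_0,x_1)=d_G(y_0,y_1)=m$, and $d_G(x_0,y_1),d_G(y_0,x_1)\ge m$ by minimality of $m$. Up to the usual translation to $G$, this amounts to a level-$\ell$ vertex $s$ of $I_G(x_0,x_1)$ and a level-$\ell$ vertex $t$ of $I_G(y_0,y_1)$, and I would bound $d_G(s,t)$ by comparing $t$ with $I_G(x_0,x_1)$: a short case analysis on $d_G(x_0,t)\in\{\ell-1,\ell,\ell+1\}$ — moving $s$ or $t$ by at most one step onto a common slice of $I_G(x_0,x_1)$ and invoking Lemma~\ref{lem:thinness}, or Lemma~\ref{lm:auxiliary-GD} with parameter at most $2$ — yields $d_G(s,t)\le i+5$. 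Translating back (with an extra additive $2$ in the case where the slice vertices are subdivision vertices, since then their $\Sigma(G)$-distance equals $2+2d_G(\cdot,\cdot)$ between endpoints of the corresponding edges of $G$) gives a slice diameter at most $2(i+5)+2=2i+12$, which dominates the bounds obtained in all the other cases.

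The step I expect to be the main obstacle is precisely this last ``fully crossed'' configuration: it is the only one that is neither a slice of $G$ nor an instance of the one-subdivision case, and it forces a comparison of two geodesics of $G$ running between two ``parallel'' edges $x_0y_0$ and $x_1y_1$. The projection argument there is what produces the constants $5$ and $12$; I do not believe them to be tight, but they suffice for the stated bound.
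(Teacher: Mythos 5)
Your proposal is correct and follows essentially the same route as the paper's proof: reduce, via bipartiteness of $\Sigma(G)$, to bounding $G$-distances between original vertices lying at (almost) the same level on shortest paths toward endpoints of the subdivided edges, apply Lemma~\ref{lem:thinness} and Lemma~\ref{lm:auxiliary-GD} with small additive slack to get $d_G\le i+5$, and convert back with the $+2$ correction to obtain $2i+12$. The paper merely compresses your three-case analysis into a single WLOG choice of a base endpoint $u$ of the subdivided pole with $d_G(u,w)\le d_G(v,w)$; the key lemmas, the $i+5$ bound in $G$, and the final constant are the same.
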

\begin{proof}
    Let $s,t \in V \cup E$ be arbitrary.
    Since $\Sigma(G)$ is bipartite with partite sets $V$ and $E$, for every two consecutive slices $S_k(s,t)$ and $S_{k+1}(s,t)$, there is one slice fully in $V$ and one slice fully in $E$.
    In particular, $\kappa(s,t)$ is at most two units more than the maximum diameter of a slice $S_k(s,t) \subseteq V$.
    In what follows, we consider some vertices $p,q \in S_k(s,t) \subseteq V$, for some $k \le d_{\Sigma(G)}(s,t)$ such that $d_{\Sigma(G)}(p,q)$ is maximized.
    Let $\psi(s)$ be defined as follows: if $s \in V$ then $\psi(s) = \{s\}$, and if $s = uv \in E$ then $\psi(s) = \{u,v\}$.
    Let $\psi(t)$ be defined in a similar fashion.
    Consider some shortest $(s,t)$-paths $P,Q$ of $\Sigma(G)$ such that $p \in V(P), \ q \in V(Q)$.
    There exist some $u \in \psi(s)$ and $w \in \psi(t)$ such that $P \cap V$ induces a shortest $(u,w)$-path of $G$.
    In the same way, there exist some $u' \in \psi(s)$ and $w' \in \psi(t)$ such that $Q \cap V$ induces a shortest $(u',w')$-path of $G$.
    As in $G$ either $u=u'$ or $u \sim u'$, and similarly either $w=w'$ or $w \sim w'$, we obtain that $q$ lies on some $(u,w)$-path (not necessarily shortest) in $G$ of length at most $d_G(u,w)+2$.
    Furthermore, $d_G(u,p) \le d_G(u,q) \le d_G(u,p)+1$.
    Let $q' \in N_G[q]$ be satisfying $d_G(u,q') = d(u,p)$ (either $q'=q$, or $q' \in S_1(q,u)$).
    As $p \in I_G(u,w)$, $d_G(u,p) = d_G(u,q')$, and $d_G(q',w) \le d_G(p,w)+2$, by Lemma~\ref{lm:auxiliary-GD} applied to $u,w,p,q'$ and $k=2$, $d_G(p,q) \le d_G(p,q')+1 \le (i + 2 + 2) + 1 = i+5$.
    Therefore, $\kappa(s,t) \le d_{\Sigma(G)}(p,q) + 2 \le 2i+12$. \qed
\end{proof}

\begin{corollary}
    If $G=(V,E)$ is $\alpha_i$-metric, then it is $f(i)$-hyperbolic for some doubly exponential function $f$.
\end{corollary}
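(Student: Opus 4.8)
The plan is to obtain this as an immediate consequence of Lemma~\ref{lem:thinness-subdivision} together with the theorem of Papasoglu~\cite{Pap95} recalled at the beginning of Section~\ref{sec:papasoglu}. Concretely, Lemma~\ref{lem:thinness-subdivision} gives $\kappa(\Sigma(G)) \le 2i+12$, a quantity linear in $i$. Papasoglu's theorem asserts that the hyperbolicity of $G$ is bounded above by some fixed doubly exponential function $\phi$ of $\kappa(\Sigma(G))$. Composing the two bounds, the hyperbolicity of $G$ is at most $\phi(2i+12)$; since $\phi$ is doubly exponential and $2i+12$ is linear in $i$, the map $f\colon i \mapsto \phi(2i+12)$ is again doubly exponential. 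That is the entire argument — no additional work is required once Lemma~\ref{lem:thinness-subdivision} is in hand.

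A couple of minor points I would verify while writing this up. First, I would make sure the version of Papasoglu's result being invoked bounds the hyperbolicity of $G$ itself rather than that of $\Sigma(G)$; since passing to the $1$-subdivision multiplies all pairwise distances by exactly two, the hyperbolicities of $G$ and $\Sigma(G)$ differ only by a factor of two plus a bounded additive constant, so either formulation produces the same conclusion after absorbing constants into the doubly exponential bound. Second, I would check that the hypotheses of Papasoglu's theorem are satisfied: it applies to any graph (or geodesic space) whose $1$-subdivision has finite interval thinness, and for an $\alpha_i$-metric $G$ this holds by Lemma~\ref{lem:thinness-subdivision}.

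I do not expect any real obstacle here. The only nontrivial ingredient, Papasoglu's doubly exponential estimate, is quoted as a black box, and Lemma~\ref{lem:thinness-subdivision} has already been established. The genuine content of the paper lies in replacing this doubly exponential $f$ by a \emph{linear} one, which is carried out in the subsequent subsections through sharper and more direct arguments (Helly-type reasoning, thin geodesic triangles, and so on). Accordingly, I would present this corollary simply as a first quick upper bound and immediately note that it will be superseded.
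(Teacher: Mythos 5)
Your proposal is correct and is exactly the paper's intended argument: the corollary follows by combining Lemma~\ref{lem:thinness-subdivision} with Papasoglu's doubly exponential bound on the hyperbolicity of $G$ in terms of the interval thinness of $\Sigma(G)$, as recalled at the start of Section~\ref{sec:papasoglu}. Your side remarks on which graph Papasoglu's bound applies to are also consistent with the form in which the paper quotes that result.
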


    \subsection{Using Cop-Robber Games}\label{sec:cop-robber}

An $n$-vertex graph $G$, with $n>1$, is {\em dismantlable} if its vertices can be ordered $v_1, \dots, v_n$ so that for each vertex $v_k$, $1 \le k < n$, there exists another vertex $v_{\ell}$ with
$\ell > k$, such that $N[v_k] \cap V_k \subseteq N[v_{\ell}]$, where $V_k := \{v_k, v_{k+1}, \dots , v_n\}$. Such a vertex  ordering is called a {\em dismantling ordering}. It is known~\cite{NW83,Quil85} that the dismantlable graphs are exactly those graphs where in the classical cop and robber game the cop has a winning strategy. In this classical cop and robber game, two players, the cop $\cC$ and the robber $\cR$, move
alternatively along edges of a graph $G$. The cop captures the robber if both players
are on the same vertex at the same moment of time. A graph $G$ is called {\em cop-win} if the cop always
has a strategy to capture the robber after a finite number of steps. Nowakowski, Winkler~\cite{NW83} and Quilliot~\cite{Quil85} 
characterized the cop-win graphs as graphs admitting a dismantling ordering. 

First we show that every $(i+1)^{st}$ power of an  $\alpha_i$-metric graph $G$ is dismantlable. Recall that the $\lambda^{th}$ power of a graph $G=(V,E)$ is the graph $G^{\lambda}=(V,E')$ such that $uv\in E'$ if and only if $0<d(u,v)\le \lambda$. 
For the case when $i=1$, this result follows also from \cite{chalopin2022graphs,YuCh1991}. 

\begin{lemma}\label{lm:dismantl-i}
If $G$ is an $\alpha_i$-metric graph, then $G^{i+1}$ is  dismantlable. In particular, every BFS-ordering of $G$ is a dismantling ordering of  $G^{i+1}$.  
\end{lemma}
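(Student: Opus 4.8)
The plan is to exhibit an explicit dismantling ordering of $G^{i+1}$ and to verify the domination condition using the $\alpha_i$-metric property via Lemma~\ref{lem:three-balls}. I would take any BFS-ordering $v_1,\dots,v_n$ of $G$ rooted at some vertex $r=v_n$, so that vertices are listed by non-increasing BFS-layer; the key structural fact is that each $v_k$ ($k<n$) has a BFS-parent $p(v_k)$ with $d(v_k,p(v_k))=1$ and $d(p(v_k),r)=d(v_k,r)-1$, and $p(v_k)$ comes later in the ordering. I claim the same ordering is a dismantling ordering for $G^{i+1}$, with $v_k$ dominated in $G^{i+1}[V_k]$ by its BFS-grandparent-at-distance-$?$ — more precisely, by the ancestor $v_\ell$ of $v_k$ on the BFS-tree path toward $r$ that lies at distance exactly $\min\{i, d(v_k,r)\}$ from $v_k$ (if $d(v_k,r)\le i$ take $v_\ell=r$). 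Note $v_\ell$ occurs after $v_k$ since it is a strict ancestor (or $r$ itself).

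To verify $N_{G^{i+1}}[v_k]\cap V_k \subseteq N_{G^{i+1}}[v_\ell]$, take any $u\in V_k$ with $d_G(u,v_k)\le i+1$; I must show $d_G(u,v_\ell)\le i+1$. The three disks $D(v_k,1)$, $D(u, i)$, and $D(r, d(v_k,r)-1)$ are pairwise intersecting: the first two meet since... hmm, actually the clean way is to apply Lemma~\ref{lem:three-balls} to disks chosen so that their common point is forced to be an ancestor of $v_k$ close to $v_\ell$. Concretely, consider $D(u, d(u,v_k)-1)$, $D(p(v_k),1)$ — wait, I should set this up so the output ball radius plus slack lands at $i+1$. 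The honest approach: let $u'=p(v_k)$ be the BFS-parent and apply the $\alpha_i$-metric property directly to the quadruple formed by $u$, $v_k$, $u'=p(v_k)$, and a vertex on a shortest $(v_k,u)$-path adjacent to $v_k$; since $v_k$ is the "peak" in the BFS sense between $r$ and $u$, we get that moving from $v_k$ toward $r$ does not increase the distance to $u$ by more than a controlled amount. Iterating this $i$ times down the ancestor path from $v_k$ to $v_\ell$ shows $d_G(u,v_\ell)\le d_G(u,v_k) + i - \text{(drop)}$; since $d_G(u,v_k)\le i+1$ and we descend $i$ levels while $d(\cdot,u)$ cannot grow past what the $\alpha_i$ inequality permits, we conclude $d_G(u,v_\ell)\le i+1$.

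The cleanest packaging uses Lemma~\ref{lem:three-balls} in one shot: the disks $D(v_\ell, r_\ell)$, $D(u, i+1)$, and $D(r, \rho)$ with $r_\ell = 0$ adjusted — actually I'd rather state it as: $D(v_k, 1)$ and $D(u, i+1-1)$... I will instead prove the single-step lemma "if $v$ lies between $w$ and $v'$ with $w\sim v'$ (a terminal edge, here $v'=v_k$, $w$ a neighbor on a $(v_k,u)$-geodesic) then replacing $v_k$ by $p(v_k)$ changes $d(\cdot,u)$ in a way bounded by the $\alpha_i$-property", and then induct. The main obstacle I anticipate is precisely the bookkeeping in this induction: ensuring that after each of the $i$ descent steps along the BFS-tree the relevant betweenness hypothesis of the $\alpha_i$-metric property still holds — i.e., that the current ancestor still lies on a shortest path from $r$ through it to the part of the graph containing $u$, and that the accumulated defect stays at most $i$ rather than growing by $i$ at every step. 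Handling this likely requires exploiting that BFS layers are monotone along the tree path together with Lemma~\ref{lm:auxiliary-GD} to keep slice-diameters under control; getting the constant to come out exactly as $i+1$ (rather than something larger) is where the argument must be tight, and I expect that is the genuinely delicate point.
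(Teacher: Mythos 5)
Your high-level strategy (a BFS-ordering of $G$ is itself a dismantling ordering of $G^{i+1}$) is the same as the paper's, but the proposal never completes the one verification that matters, and the route you sketch for it would not work. You choose as dominating vertex $v_\ell$ the BFS-ancestor of $v_k$ at distance $\min\{i,d(v_k,r)\}$, and then propose to control $d(u,v_\ell)$ by iterating the $\alpha_i$-inequality $i$ times along the tree path. The contradiction that the $\alpha_i$-property can produce here comes from comparing with the BFS fact $d(r,u)\le d(r,v_k)$ for every $u$ occurring after $v_k$; but after the first descent step the analogous inequality $d(r,u)\le d(r,p(v_k))$ is simply false in general (e.g.\ when $u$ lies in the same BFS layer as $v_k$), so the betweenness/defect bookkeeping you flag as ``the genuinely delicate point'' is not a technicality — it is exactly where the iteration breaks, and the defect can indeed accumulate. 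Your fallback via Lemma~\ref{lem:three-balls} is also never set up with concrete radii, so as written there is no proof of the domination condition.

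The paper's proof avoids all of this by making the opposite choice: take $v_\ell$ to be any \emph{neighbor} of $v_k$ on a shortest $(v_k,v_n)$-path (so $\ell>k$), and observe that for $u$ later in the ordering with $d(u,v_k)\le i+1$ the only possible failure is the extremal case $d(u,v_k)=i+1$, $d(u,v_\ell)=i+2$. In that case $v_\ell\in I(v_n,v_k)$ and $v_k\in I(v_\ell,u)$ are adjacent, so a \emph{single} application of the $\alpha_i$-property to the quadruple $v_n,v_\ell,v_k,u$ gives $d(v_n,u)\ge d(v_n,v_k)+1$, contradicting $d(v_n,u)\le d(v_n,v_k)$ from the BFS-ordering. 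No iteration, no Lemma~\ref{lem:three-balls}, no Lemma~\ref{lm:auxiliary-GD} is needed; moving the dominating vertex only one step toward the root is what makes the constant $i+1$ come out for free. If you want to salvage your write-up, replace your choice of $v_\ell$ by the parent (or any neighbor of $v_k$ in $I(v_k,v_n)$) and carry out this one-shot argument.
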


\begin{proof} 
Let $v_1, \dots, v_n$ with $v_n:=u$ be a $BFS(u)$-ordering of $G$ started at an arbitrary vertex $u$. We will show that for every vertex $v_k$ there is a vertex $v_{\ell}$ with $\ell>k$ such that for every vertex $x\in \{v_{k+1}, \dots, v_n\}$, $d(x,v_k)\le i+1$ implies $d(x,v_{\ell})\le i+1$.  As a vertex $v_{\ell}$  we will choose any neighbour of $v_k$ in $I(v_k,v_n)$. Assume that for some $x\in \{v_{k+1}, \dots, v_n\}$ with $d(x,v_k)\le i+1$, $d(x,v_{\ell})> i+1$ holds. Then,  necessarily, $d(x,v_{\ell})= i+2$ and $d(x,v_k)= i+1$. By the $\alpha_i$-metric property applied to $v_n,v_{\ell},v_k,x$, $d(x,v_n)\ge d(v_n,v_k)+d(v_k,x)-i= d(v_n,v_k)+1$ must hold, contradicting with the fact that in any $BFS(u)$-ordering, $d(u,v_k)\ge d(u,x)$ for every $x\in \{v_{k+1}, \dots, v_n\}$. 
\qed
\end{proof}

%
%
Recently, several other variants of the classical
cop and robber game were introduced and investigated  \cite{ChChNiVa11,ChChPaPe14,FoGoKr08,FoGoKr++,NiSu08}. In a  most general extension of the game, the cop $\cC$ and the robber $\cR$ move at speeds $s'\ge 1$ and $s \ge 1$, respectively. This game was 
introduced and thoroughly investigated in \cite{ChChNiVa11,ChChPaPe14}. It generalizes the cop and fast robber game from  \cite{FoGoKr08,FoGoKr++,NiSu08}. The unique difference between this “$(s, s')$-cop and robber game” and
the classical cop and robber game is that at each step, $\cC$ can move along a path of length at most $s'$ and $\cR$ can move along a path of length at most $s$ not traversing the position occupied by the cop. In \cite{ChChNiVa11}, the class of cop-win graphs for this game was denoted by ${\cal CWFR}(s, s')$. 

Similar to the characterization of classical cop-win graphs given in~\cite{NW83,Quil85}, the $(s, s')$-cop-win graphs have been characterized in \cite{ChChNiVa11}  via a special $(s, s')$-dismantling scheme. A graph $G = (V, E)$ is called {\em $(s, s')$-dismantlable} if its vertices can be ordered $v_1, \dots, v_n$ so that for each vertex $v_k$, $1 \le k < n$, there exists another vertex $v_{\ell}$ with
$\ell > k$, such that $D_{G-v_{\ell}}(v_k,s) \cap V_k \subseteq D_{G}(v_{\ell},s')$, where $V_k := \{v_k, v_{k+1}, \dots , v_n\}$ and $D_{G-v_{\ell}}(v_k,s)$ is the disk of radius $s$ and with center $v_{k}$ considered in the graph $G\setminus\{v_{\ell}\}$. Such a vertex  ordering is called an {\em $(s, s')$-dismantling ordering}. It was proven in  \cite{ChChNiVa11} that $G$ belongs to the class ${\cal CWFR}(s, s')$, $s'\le s$, if and only if $G$ is $(s, s')$-dismantlable\footnote{Note that if $s'> s$, then the cop can always capture the robber by strictly
decreasing at each move his distance to the robber.}. 
 Furthermore, any $\delta$-hyperbolic graph 
belongs to the class
${\cal CWFR}(2r, r + 2\delta)$ for any $r > 0$, and the graphs in ${\cal CWFR}(s, s')$ are
$(s - 1)$-hyperbolic for any $s \ge 2s'$~\cite{ChChNiVa11}. In the follow-up paper~\cite{ChChPaPe14}, those results were complemented by proving that if $s' < s$, then any graph of
${\cal CWFR}(s, s')$ is $\delta$-hyperbolic with  $\delta = O(s^2)$ (unfortunately, a large constant is hidden under the big-O). Authors of \cite{ChChPaPe14} showed  also that the dependency between $\delta$  and $s$ is linear if $s- s' = \Omega(s)$ and $G$
satisfies a slightly stronger $(s, s')^*$-dismantling condition. A graph $G = (V, E)$ is called {\em $(s, s')^*$-dismantlable} if its vertices can be ordered $v_1, \dots, v_n$ so that for each vertex $v_k$, $1 \le k < n$, there exists another vertex $v_{\ell}$ with
$\ell > k$, such that $D_G(v_{k},s) \cap V_k \subseteq D_G(v_{\ell},s')$, where $V_k := \{v_k, v_{k+1}, \dots , v_n\}$ and both disks are considered in $G$. Such a vertex  ordering is called an {\em $(s, s')^*$-dismantling ordering}. Paper~\cite{ChChPaPe14} demonstrated also that weakly modular graphs from ${\cal CWFR}(s, s')$ with $s' < s$ are $184s$-hyperbolic. 

Next, we show that $\alpha_i$-metric graphs with $n>1$ vertices belong to the class
${\cal CWFR}(r, \lceil{r/2}\rceil + 2i+1)$ for any $r>0$. In fact, we will prove that they are $(r, \lceil{r/2}\rceil + 2i+1)^*$-dismantlable. For that, we will need Lemma \ref{lem:thinness}. 

\begin{lemma}\label{(s,s')-dism}
If $G=(V,E)$ is an $\alpha_i$-metric graph with $n>1$ vertices, then every BFS-ordering of $G$ is also an  $(r, \lceil{r/2}\rceil + 2i+1)^*$-dismantling ordering for every $r>0$.  
\end{lemma}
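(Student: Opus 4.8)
The plan is to run a BFS from some vertex $u=v_n$ (so that $j\mapsto d(u,v_j)$ is non-increasing, exactly as used in the proof of Lemma~\ref{lm:dismantl-i}) and, for each $k<n$, to take as the witness $v_\ell$ any vertex of the slice $S_j(v_k,u)$, where $j:=\lceil r/2\rceil$; concretely, the vertex at distance $j$ from $v_k$ along a fixed shortest $(v_k,u)$-path. Every vertex of that slice is at distance $d(u,v_k)-j<d(u,v_k)$ from $u$, hence occurs strictly after $v_k$ in the BFS-ordering, so $\ell>k$. If $d(u,v_k)<j$ the slice is empty, but then one simply takes $v_\ell=u$ and the condition is immediate since $d(v_\ell,x)=d(u,x)\le d(u,v_k)<j$ for all $x\in D(v_k,r)\cap V_k$. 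So write $v:=v_k$ and $D:=d(u,v)\ge j$, and fix $v_\ell\in S_j(v,u)$.

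The core step is, for an arbitrary $x\in D(v,r)\cap V_k$ with $t:=d(v,x)\le r$ and $b:=d(u,x)$ (note $b\le D$, since $x$ does not precede $v_k$ in the ordering), to find some $y\in S_j(v,u)$ with $d(x,y)$ small. I would get this from the three-balls lemma (Lemma~\ref{lem:three-balls}) applied to $D(v,j)$, $D(u,D-j)$, and $D(x,\rho)$ with $\rho:=\max\{t-j,\ b-D+j,\ 0\}$: a one-line check using $d(v,u)=D$, $d(v,x)=t$, $d(u,x)=b$ shows these disks pairwise intersect, so the lemma produces $y$ with $d(v,y)\le j$, $d(u,y)\le D-j$, $d(x,y)\le\rho+i$. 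Now $d(v,y)+d(u,y)\ge d(v,u)=j+(D-j)$ forces $d(v,y)=j$, $d(u,y)=D-j$ and $y\in I(v,u)$, i.e. $y\in S_j(v,u)$.

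To finish, I would use interval thinness: $v_\ell,y\in S_j(v,u)$ and Lemma~\ref{lem:thinness} give $d(v_\ell,y)\le\kappa(G)\le i+1$, so
\[ d(v_\ell,x)\ \le\ d(v_\ell,y)+d(y,x)\ \le\ (i+1)+(\rho+i)\ =\ \rho+2i+1 . \]
It remains only to note $\rho\le j=\lceil r/2\rceil$: indeed $t-j\le r-\lceil r/2\rceil\le\lceil r/2\rceil$, $b-D+j\le j$ (as $b\le D$), and $0\le j$. Hence $D(v,r)\cap V_k\subseteq D(v_\ell,\lceil r/2\rceil+2i+1)$, which is the required dismantling condition; doing this for every $k<n$ shows that the BFS-ordering is $(r,\lceil r/2\rceil+2i+1)^*$-dismantling.

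The only real obstacle is picking the three radii: $\rho$ has to be $\le\lceil r/2\rceil$ for the final estimate, yet large enough for $D(x,\rho)$ to meet both $D(v,j)$ and $D(u,D-j)$, and the output of the three-balls lemma must be forced into the slice $S_j(v,u)$ and not merely somewhere near the $(v,u)$-geodesic. The fortunate point is that $d(v,y)+d(u,y)\ge d(v,u)$ pins $y$ into that slice automatically, after which everything is routine given Lemmas~\ref{lem:three-balls} and~\ref{lem:thinness}.
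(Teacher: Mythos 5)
Your proof is correct, and it reaches the same bound by a slightly different mechanism. The skeleton coincides with the paper's: pick the witness $v_\ell$ in a middle slice of $I(v_k,u)$, show that every $x\in D(v_k,r)\cap V_k$ is within $\lceil r/2\rceil+i$ of some vertex of that slice, and finish with the interval-thinness bound of Lemma~\ref{lem:thinness}. Where you differ is in the middle step: the paper takes the slice at depth $\lfloor r/2\rfloor$, picks the vertex $x'$ of the slice closest to $x$, and applies the $\alpha_i$-property directly to $x$, $x'$, a neighbour $x''$ of $x'$ towards $x$, and either $v_k$ or $u$ (a two-case analysis driven by which distance the neighbour increases, plus the BFS property $d(u,x)\le d(u,v_k)$); you instead take the slice at depth $\lceil r/2\rceil$ and invoke Lemma~\ref{lem:three-balls} on $D(v_k,j)$, $D(u,D-j)$, $D(x,\rho)$ with $\rho=\max\{t-j,\,b-D+j,\,0\}$, and the nice observation that the triangle equality $d(v_k,y)+d(y,u)\ge d(v_k,u)$ pins the output $y$ into the slice $S_j(v_k,u)$. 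Your checks are all sound: the pairwise intersections, $\rho\le\lceil r/2\rceil$ (using $b\le D$ from the BFS property and $r-\lceil r/2\rceil=\lfloor r/2\rfloor$), the degenerate case $d(u,v_k)<j$ with $v_\ell=u$, and the fact that $\ell>k$ because $v_\ell$ is strictly closer to $u$ than $v_k$. What each approach buys: the paper's argument is self-contained, using only the raw $\alpha_i$-property and the BFS ordering; yours reuses the already-proved three-balls lemma, which makes the argument shorter and mirrors the technique the paper itself uses later for Theorem~\ref{thm:thinness}, at the cost of hiding the closest-point argument inside Lemma~\ref{lem:three-balls}. The $\lfloor r/2\rfloor$ versus $\lceil r/2\rceil$ choice is immaterial; both yield $\lceil r/2\rceil+2i+1$.
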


\begin{proof} We may assume that $r\ge 2$. 
Let $v_1, \dots, v_n$ with $v_n:=u$ be a $BFS(u)$-ordering of $G$ started at an arbitrary vertex $u$. We will show that for every vertex $v_k$ there is a vertex $v_{\ell}$ with $\ell>k$ such that for every vertex $x\in \{v_{k+1}, \dots, v_n\}$, $d(x,v_k)\le r$ implies $d(x,v_{\ell})\le \lceil{r/2}\rceil+2i+1$. As a vertex $v_{\ell}$  we will choose any vertex from  $S_q(v_k,v_n)$, where $q=\min\left\{\lfloor{r/2}\rfloor,d(v_k,v_n)\right\}$ (in particular, if $d(v_k,v_n)<\lfloor{r/2}\rfloor$, then we set $v_{\ell}:=v_n$). 
Let $x$ be an arbitrary vertex from $\{v_{k+1}, \dots, v_n\}$ with $d(x,v_k)\le r$, and $x'$ be a vertex of $S_q(v_k,v_n)$ closest to $x$.
If $x=x'$, then by Lemma~\ref{lem:thinness}, we get $d(x,v_{\ell}) \le i+1 \le \lceil{r/2}\rceil +2i+1$.
Thus, from now on $x \ne x'$, and let $x''$ be a neighbour of $x'$ on a shortest path from $x'$ to $x$. By the choice of $x'$, $d(x'',v_k)>d(x',v_k)$ or $d(x'',v_n)>d(x',v_n)$. In the former case, by the $\alpha_i$-metric property applied to $v_k,x',x'', x$, we get $d(v_k,x)\ge d(v_k,x')+d(x',x)-i$. That is, $d(x',x)\le d(v_k,x)-d(v_k,x')+i$. If $q < \lfloor{r/2}\rfloor$ then necessarily $x' = v_n$, and so $d(v_k,x)-d(v_k,x')+i = d(v_k,x) - d(v_k,v_n) + i \le d(v_n,x)+i$. We also know, by the property of a $BFS(u)$-ordering, that $d(v_n,v_k)\ge d(v_n,x)$. Therefore, $d(x',x) \le d(v_n,x)+i \le d(v_n,v_k)+i < \lfloor{r/2}\rfloor + i$. Otherwise ($q = \lfloor{r/2}\rfloor$), $d(v_k,x)-d(v_k,x')+i \le r- \lfloor{r/2}\rfloor+i=\lceil{r/2}\rceil +i$. Finally, in the latter case, by the $\alpha_i$-metric property applied to $v_n,x',x'', x$, we get $d(v_n,x)\ge d(v_n,x')+d(x',x)-i$. Recall that we also know, by the property of a $BFS(u)$-ordering, that $d(v_n,v_k)\ge d(v_n,x)$. Hence, $d(x',x)\le d(v_n,x)-d(v_n,x')+i\le d(v_n,v_k)-d(v_n,x')+i\le d(x',v_k)+i\le \lfloor{r/2}\rfloor+i$. 

Since in either case $d(x',x)\le \lceil{r/2}\rceil +i$ holds and, by Lemma \ref{lem:thinness}, $d(x',v_{\ell})\le i+1$, we get $d(x,v_{\ell})\le \lceil{r/2}\rceil +2i+1$.
\qed
\end{proof}

From Lemma \ref{(s,s')-dism} and \cite[Corollary 3]{ChChPaPe14}, we conclude: 
\begin{corollary} \label{cor:hyp-huge}
 Every $\alpha_i$-metric graph is $\delta$-hyperbolic for 
 $\delta=O(i)$. 
\end{corollary}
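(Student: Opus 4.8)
The plan is to feed Lemma~\ref{(s,s')-dism} into the machinery of~\cite{ChChPaPe14} in the right parameter regime. Fix an $\alpha_i$-metric graph $G$ with $n>1$ vertices. By Lemma~\ref{(s,s')-dism}, for every $r>0$ any BFS-ordering of $G$ is an $(s,s')^*$-dismantling ordering with $s=r$ and $s' = \lceil r/2\rceil + 2i+1$. The key observation is that $s - s' = r - \lceil r/2\rceil - 2i - 1 = \lfloor r/2 \rfloor - 2i - 1$, so by choosing $r$ to be a suitable linear function of $i$ — say $r = c\cdot i$ for a large enough absolute constant $c$, or more simply $r = 8i+8$ so that $\lfloor r/2\rfloor - 2i - 1 = 2i + 3 \geq \tfrac{1}{4} r$ — we land in the regime $s - s' = \Omega(s)$ where~\cite[Corollary~3]{ChChPaPe14} guarantees that an $(s,s')^*$-dismantlable graph is $\delta$-hyperbolic with $\delta$ \emph{linear} in $s$. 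Since $s = r = \Theta(i)$, this yields $\delta = O(i)$, which is exactly the claim.

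First I would recall precisely the statement of~\cite[Corollary~3]{ChChPaPe14}: a graph admitting an $(s,s')^*$-dismantling ordering with $s - s' = \Omega(s)$ is $O(s)$-hyperbolic, with the hidden constant absolute. Then I would verify the hypothesis: instantiate Lemma~\ref{(s,s')-dism} at $r = 8i+8$ (one may need $r \ge 2$, which holds), record that every BFS-ordering is then an $(8i+8,\, 6i+5)^*$-dismantling ordering, and check $s - s' = 2i+3 \ge (8i+8)/4 = 2i+2$, hence $s-s' = \Omega(s)$. Applying the cited corollary gives $\delta = O(8i+8) = O(i)$. The degenerate case $n=1$ is trivially $0$-hyperbolic, so the bound holds for all $\alpha_i$-metric graphs.

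The main obstacle is not mathematical depth but making sure the parameter bookkeeping matches the precise hypotheses of~\cite{ChChPaPe14}: that corollary may be stated with a specific lower bound on $s - s'$ relative to $s$ (e.g. $s - s' \ge s/3$ or a similar fraction) rather than the loose asymptotic ``$\Omega(s)$'', so the constant $c$ in $r = c\,i$ (equivalently, the choice $r = 8i+8$) must be chosen generously enough to clear whatever explicit threshold that paper uses, and large enough that $\lfloor r/2\rfloor - 2i - 1$ stays positive and grows linearly in $i$. As long as $c$ is a fixed absolute constant this only affects the hidden constant in $\delta = O(i)$ and not the linear dependence, so a single clean choice of $r$ suffices and no case analysis on $i$ is needed. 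A secondary point worth a sentence is that Lemma~\ref{(s,s')-dism} gives a BFS-ordering simultaneously for \emph{all} $r$, so we are free to pick the $r$ that optimizes the dependence; this is what lets us escape the much weaker $\delta = O(s^2)$ bound that~\cite{ChChPaPe14} gives without the $s - s' = \Omega(s)$ assumption, and it is the reason the $(s,s')^*$ (rather than merely $(s,s')$) dismantling condition established in Lemma~\ref{(s,s')-dism} is exactly what we need.
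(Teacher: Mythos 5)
Your proposal is correct and follows essentially the same route as the paper's proof: both feed Lemma~\ref{(s,s')-dism} into \cite[Corollary~3]{ChChPaPe14} with $r$ chosen linear in $i$ so that $s-s'\ge s/4$ (you take $r=8i+8$, the paper takes $r=4(2i+1)$, with $k=1/4$ in either case), yielding $\delta=O(i)$. The only point you leave open --- the exact form of the threshold in \cite[Corollary~3]{ChChPaPe14} --- is resolved in the paper exactly as you anticipated ($s-s'\ge ks$ gives $64s/k$-hyperbolicity), so your parameter choice goes through verbatim.
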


\begin{proof} By Lemma \ref{(s,s')-dism}, any  $\alpha_i$-metric graph $G$ with $n>1$ vertices has an  $(r, \lceil{r/2}\rceil + 2i+1)^*$-dismantling ordering for every $r>0$. It is known  \cite[Corollary 3]{ChChPaPe14} that if a graph $G$ is 
$(s, s')^*$-dismantlable with $s - s' \ge k s$ for some constant $k > 0$, then $G$ is $64s/k$-hyperbolic. 
For our  $\alpha_i$-metric graph $G$, we can pick $k:=1/4$ and $r:= 4(2i+1)$. Then,  $G$ is  $(s, s')^*$-dismantlable with $s=4(2i+1)$, $s'=3(2i+1)$, and $s=s'+(1/4)s$. Therefore, $G$ must be $\delta$-hyperbolic for $\delta=2^6s/k=2^{10}(2i+1).$   
\qed
\end{proof}


    \subsection{Using Geodesic Triangles}\label{sec:triangles}

For every vertices $x,y,z$ in a graph $G=(V,E)$, the \emph{Gromov product} of $x,y$ with respect to $z$ is defined as $(x \mid y)_z = \frac 1 2 \left(d(x,z)+d(y,z) - d(x,y)\right)$.
Let $\Delta(x,y,z) = P(x,y) \cup P(y,z) \cup P(z,x)$ be some geodesic triangle.
There is a canonical distance-preserving embedding of $x,y,z$ in some weighted star $T(x,y,z)$: the leaves are $x,y,z$, and the respective weights of their incident edges are $\alpha_x = (y \mid z)_x, \alpha_y = (z \mid x)_y, \alpha_z = (x \mid y)_z$.
Furthermore, if we replace each edge of $T(x,y,z)$ by some continuous segment, then one can define a unique mapping $\varphi : \Delta(x,y,z) \mapsto T(x,y,z)$ such that the restriction of $\varphi$ on either side $P(x,y)$, $P(y,z)$ or $P(z,x)$ is an isometry.
The \emph{thinness} of  $\Delta(x,y,z)$ equals the smallest $\delta$ such that, for every $u,v \in \Delta(x,y,z)$ with $\varphi(u) = \varphi(v)$, $d_G(u,v) \le \delta$. 
See Fig.~\ref{fig:thinness} taken from~\cite{Fast-appr-hyperb} for an illustration.

\begin{figure}
    \centering
    \includegraphics[width=\textwidth]{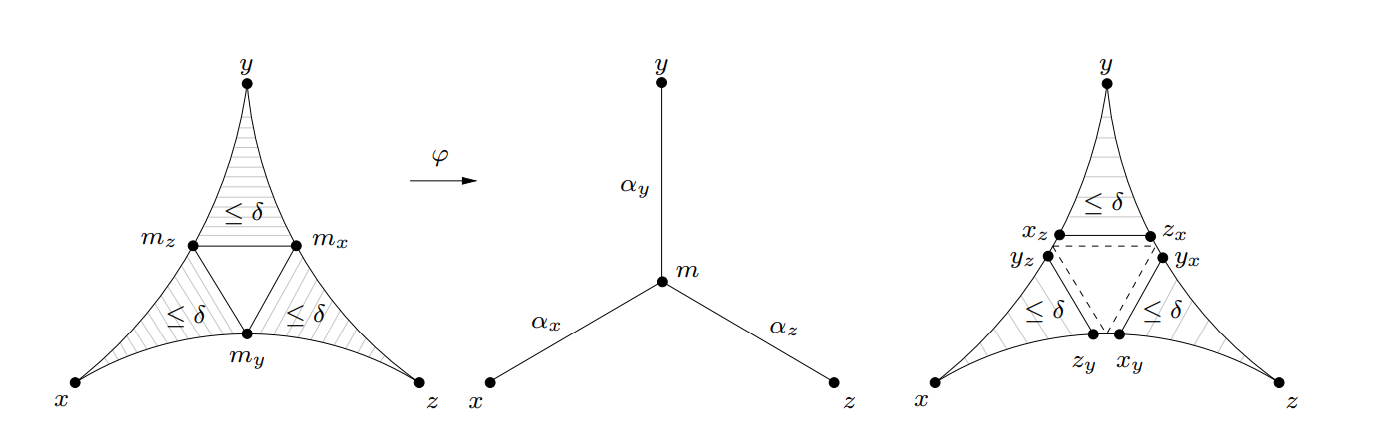}
    \caption{Thinness in graphs.}
    \label{fig:thinness}
    \vspace*{-5mm}
\end{figure}

It is known \cite[Proposition 3.1]{Fast-appr-hyperb} that in $\delta$-hyperbolic graphs all geodesic triangles are $4\delta$-thin. 
Conversely \cite[Lemma 3.1]{ChDrVa17}, if all geodesic triangles in a graph are $\delta$-thin, then it is $\delta$-hyperbolic.
Next we show that the thinness of geodesic triangles in $\alpha_i$-metric graphs, and so, their hyperbolicity, is at most $3(i+1)$. 

\begin{theorem}\label{thm:thinness}
All geodesic triangles in an $\alpha_i$-metric graph $G$ are $3(i + 1)$-thin. 
\end{theorem}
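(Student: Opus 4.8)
The plan is to bound, for an arbitrary geodesic triangle $\Delta(x,y,z)=P(x,y)\cup P(y,z)\cup P(z,x)$, the distance $d_G(p,q)$ between any two points $p,q$ with $\varphi(p)=\varphi(q)$. First I would reduce to the case where $p$ and $q$ lie on two different sides of the triangle, say $p \in P(x,y)$ and $q \in P(x,z)$, with the common image lying in the $x$-branch of the tripod $T(x,y,z)$; the case where $\varphi(p)=\varphi(q)$ is the center of the tripod is the worst case and can be treated uniformly by allowing $p$ (resp. $q$) to be the point of $P(x,y)$ (resp. $P(x,z)$) at distance exactly $(y\mid z)_x$ from $x$. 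By the isometry property of $\varphi$ on each side, $d_G(x,p)=d_G(x,q)=k$ for the appropriate $k\le (y\mid z)_x$, so $p$ and $q$ both lie in the slice $S_k(x,y')$ and $S_k(x,z')$ respectively, where $y',z'$ are suitable endpoints — but they lie on \emph{different} shortest paths out of $x$, so Lemma~\ref{lem:thinness} does not apply directly.

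The key step is therefore to relate $p$ and $q$ through the Gromov-product geometry. Since $\varphi(p)=\varphi(q)$ sits in the $x$-branch, we have $d_G(x,p)=d_G(x,q)=k \le (y\mid z)_x$, and the defining inequality of the tripod gives $d_G(p,y)=d_G(x,y)-k$ and $d_G(q,z)=d_G(x,z)-k$. Now I would pick a vertex $w$ on $P(y,z)$ whose tripod-image is also the center (or closest to it), so that $d_G(y,w)\approx (x\mid z)_y$ and $d_G(z,w)\approx (x\mid y)_z$; then $w$ is ``far'' from $x$ along both $P(x,y)\cup P(y,w)$ and $P(x,z)\cup P(z,w)$ in the sense that these concatenations are shortest paths up to small defect. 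The idea is to apply Lemma~\ref{lm:auxiliary-GD} twice: once with the shortest path $P(x,y)$ (so $p \in I(x,y)$), the vertex $q$, and the estimates $d_G(x,p)=d_G(x,q)$ and $d_G(y,q)\le d_G(y,p)+k'$ for a small $k'$ controlled by the thinness of the star embedding; and symmetrically. This yields $d_G(p,q)\le k'+i+2$, and the bookkeeping of $k'$ in terms of $i$ (using that intervals are $(i+1)$-thin, Lemma~\ref{lem:thinness}, and that concatenations through near-medians are near-geodesics) should give $k' \le 2(i+1)$ or so, landing at the claimed $3(i+1)$.

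Concretely, I expect the cleanest route is: let $m$ be a quasi-median vertex of the triple $x,y,z$ on the $x$-side (Section~\ref{sec:prelim}), so that $d_G(x,m)$ is within a constant of $(y\mid z)_x$; then both $p$ and $q$ are within $O(i)$ of $m$ by Lemma~\ref{lm:auxiliary-GD} applied along $P(x,y)$ and along $P(x,z)$ respectively, comparing each to the geodesic from $x$ through $m$; the triangle inequality $d_G(p,q)\le d_G(p,m)+d_G(m,q)$ then finishes, and the two summands are each bounded by roughly $\tfrac{3}{2}(i+1)$ after optimizing the choice of $m$ and the slice depth.

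The main obstacle will be the last step of controlling the ``defect'' $k'$ precisely enough to reach $3(i+1)$ rather than a larger constant multiple of $i$: one must argue carefully that the relevant concatenations of shortest paths (from $x$ through a near-median to $y$, and the side $P(x,y)$ itself) differ in length by only $O(i)$, which requires invoking the $\alpha_i$-metric property along the median triangle, not merely interval thinness. I would handle this by first proving an auxiliary claim that for any geodesic $P(x,y)$ and any vertex $p\in P(x,y)$ with $d_G(x,p)=k\le (y\mid z)_x$, one has $d_G(p,z)\le d_G(x,z)-k+c(i)$ with $c(i)$ linear in $i$ — this is essentially a one-sided version of the tripod estimate and follows from Lemma~\ref{lem:three-balls} applied to the disks $D(x,k)$, $D(y,d_G(x,y)-k)$, $D(z,(x\mid y)_z)$ — and then feeding this into Lemma~\ref{lm:auxiliary-GD}.
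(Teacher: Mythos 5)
Your final paragraph is, in substance, the paper's own proof: the paper also combines Lemma~\ref{lem:three-balls}, Lemma~\ref{lm:auxiliary-GD} and Lemma~\ref{lem:thinness} with exactly the accounting $i+(i+1)+(i+2)=3(i+1)$ (it works from apex $y$: Lemma~\ref{lem:three-balls} applied to $D(y,\lfloor(x|z)_y\rfloor)$, $D(x,\lceil(z|y)_x\rceil)$, $D(z,\lceil(x|y)_z\rceil)$ produces $z^*\in S_k(y,x)$ with $d(z,z^*)\le\lceil(x|y)_z\rceil+i$, then Lemma~\ref{lm:auxiliary-GD} gives $d(z^*,x')\le 2i+2$ for every $x'\in S_k(y,z)$, and Lemma~\ref{lem:thinness} adds $i+1$; you apply the thinness step before Lemma~\ref{lm:auxiliary-GD} instead of after, which yields the same total). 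The intermediate ``quasi-median $m$'' paragraph is not needed and would not obviously give the sharp constant; the route in your last paragraph is the one that works.

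One concrete repair is required, though. With your radii $D(x,k)$, $D(y,d(x,y)-k)$, $D(z,(x\mid y)_z)$, the first and third disks intersect only when $k\ge (y\mid z)_x$, so for a matched pair strictly below the branching point the hypothesis of Lemma~\ref{lem:three-balls} fails; and your fallback, ``the center of the tripod is the worst case,'' is asserted without proof -- it is precisely the reduction the paper imports from \cite[Lemma 3.2]{Fast-appr-hyperb}, stated there for arbitrary interval vertices in the two slices at level $\lfloor(x|z)_y\rfloor$ (which is also why the paper proves its bound for all of $S_k(y,x)\times S_k(y,z)$, not just for the two geodesic vertices). The cleanest fix inside your own scheme is to take $r_z=d(x,z)-k$: since $k\le(y\mid z)_x$, the three disks then pairwise intersect for every level $k$, Lemma~\ref{lem:three-balls} gives some $v\in S_k(x,y)$ with $d(z,v)\le d(x,z)-k+i$, Lemma~\ref{lem:thinness} gives $d(p,v)\le i+1$, hence $d(p,z)\le d(x,z)-k+2i+1$, and Lemma~\ref{lm:auxiliary-GD} applied to $x,z,q,p$ yields $d(p,q)\le(2i+1)+i+2=3(i+1)$; this covers every matched pair directly, so no ``worst case'' reduction (and no integrality fuss with $(x\mid y)_z$) is needed. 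With that adjustment your argument is correct and matches the paper's.
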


\begin{proof} Consider an arbitrary triple $x,y,z$ of vertices. By \cite[Lemma 3.2]{Fast-appr-hyperb} and by symmetry, it is sufficient to show that any two vertices $z'\in S_k(y,x)$ and $x'\in S_k(y,z)$, where $k=\lfloor(x|z)_y\rfloor$, are at distance at most $3(i+1)$. To do so, we will show that a vertex $z^*$ exists in $S_k(y,x)$ whose distance to any vertex $x'\in S_k(y,z)$ is at most $2(i+1)$. Then, by Lemma~\ref{lem:thinness}, it will follow that $d(z',x')\le 3(i+1)$ for any two vertices $z'\in S_k(y,x)$ and $x'\in S_k(y,z)$.  

Consider three disks $D(y,r_y), D(x,r_x), D(z,r_z)$, where  $r_y=k=\lfloor(x|z)_y\rfloor$, $r_x=\lceil(z|y)_x\rceil$ and $r_z=\lceil(x|y)_z\rceil$. Since  $\lfloor(x|z)_y\rfloor+\lceil(z|y)_x\rceil=d(x,y)$, $\lfloor(x|z)_y\rfloor+\lceil(x|y)_z\rceil=d(y,z)$ and $\lceil(z|y)_x\rceil+\lceil(x|y)_z\rceil\ge d(x,z)$, those disks pairwise intersect. By Lemma~\ref{lem:three-balls}, there is a vertex $z^*\in S_k(y,x)$ whose distance to $z$ is at most $r_z+i=\lceil(x|y)_z\rceil+i$. 
Applying now Lemma~\ref{lm:auxiliary-GD} to $z,y,z^*$ and any vertex $x'\in S_k(y,z)$, we get $d(z^*,x')\le i+i+2=2i+2$.  
\qed
\end{proof}

It was further proved in~\cite[Proposition 10]{ChDEHV08} that if the interval thinness of a graph $G$ is at most $\mu$, and the maximum side-length of its metric triangles is at most $\nu$, then $G$ must be $(16\mu+4\nu)$-hyperbolic. By Lemma~\ref{lem:thinness}, $\mu \le i+1$ if $G$ is $\alpha_i$-metric. Furthermore, if $i=1$, then it was proved in~\cite{BaCh2003} that $\nu \le 2$ (see also Lemma~\ref{lem:triangle} in the next section). However, we conclude   this section by proving that for every $i \ge 2$, the maximum side-length of metric triangles in $\alpha_i$-metric graphs is unbounded. 

\begin{lemma}\label{lem:unbounded-metric-triangle}
    For every integer $p \ge 2$, there exists an $\alpha_2$-metric graph $G_p$ with $4p$ vertices and a metric triangle $x_0y_{p-1}z_{p-1}$ such that $d(x_0,y_{p-1}) = d(x_0,z_{p-1}) = p \ge 2 = d(y_{p-1},z_{p-1})$.
    In particular, there is no constant upper bound on the side-length of metric triangles in an $\alpha_2$-metric graph. 
\end{lemma}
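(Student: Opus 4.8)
The plan is to exhibit an explicit family $\{G_p\}_{p\ge 2}$ and verify four properties, the last of which is where essentially all the work lies. Since $\alpha_2$-metric graphs are $O(1)$-hyperbolic by Theorem~\ref{thm:main}, $G_p$ cannot be a naive ``lens'' bounded by a single long induced cycle, so I take it to consist of \emph{two triangulated strips of length $p$}, one of which carries all shortest $(x_0,y_{p-1})$-paths and the other all shortest $(x_0,z_{p-1})$-paths. Concretely, let $G_p$ have vertex set $\{x_0\}\cup\{a_1,\dots,a_p\}\cup\{b_1,\dots,b_p\}\cup\{c_1,\dots,c_p\}\cup\{d_1,\dots,d_{p-1}\}$ with the identification $d_p:=b_p$, so that $|V(G_p)|=4p$; let $x_0$ be adjacent to $a_1,b_1,c_1,d_1$; on the first strip put the triangular-grid edges $a_ja_{j+1}$, $b_jb_{j+1}$, $a_jb_j$, $a_jb_{j+1}$, and symmetrically $c_jc_{j+1}$, $d_jd_{j+1}$, $c_jd_j$, $c_jd_{j+1}$ on the second strip; finally add the cross-chords $a_jc_j$ for $1\le j\le p-1$. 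Set $y_{p-1}:=a_p$ and $z_{p-1}:=c_p$.

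The first three steps are routine. \emph{Vertex count:} $|V(G_p)|=1+p+p+p+(p-1)=4p$. \emph{Distances:} give every vertex other than $x_0$ its ``column'' $j$ (and $x_0$ column $0$); every edge changes the column by at most one, so, as $y_{p-1},z_{p-1}$ sit in column $p$, we get $d(x_0,y_{p-1}),d(x_0,z_{p-1})\ge p$, with equality along $x_0,a_1,\dots,a_p$ and $x_0,c_1,\dots,c_p$; moreover $d(y_{p-1},z_{p-1})=2$ because $a_p\not\sim c_p$ while $b_p(=d_p)$ is their unique common neighbour. \emph{Metric triangle:} for each $v\in\{b_j,c_j,d_j:1\le j\le p-1\}\cup\{b_p\}$ one checks that $d(x_0,v)+d(v,y_{p-1})\ge p+1$ — a cross-chord runs ``straight across'' a column and hence does not shorten such a sum, and any route that actually leaves the first strip must re-enter it through column $p$, paying an extra step — so $I(x_0,y_{p-1})=\{x_0,a_1,\dots,a_p\}$; symmetrically $I(x_0,z_{p-1})=\{x_0,c_1,\dots,c_p\}$, and $I^o(y_{p-1},z_{p-1})=\{b_p\}$. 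The three open intervals are then pairwise disjoint, i.e., $x_0\,y_{p-1}\,z_{p-1}$ is a metric triangle of type $(2,p,p)$.

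The fourth step, that $G_p$ is $\alpha_2$-metric, is the main obstacle. I would verify the $\alpha_2$-property for every quadruple $u,v,w,x$ with $v\in I(u,w)$, $w\in I(v,x)$ and $vw\in E$ by a case analysis on how these vertices distribute among the two strips, the apex $x_0$ and the seam vertex $b_p$. When all four lie within one strip (possibly together with $x_0$ or $b_p$), the relevant induced subgraph is an isometric chordal graph, hence $\alpha_1$-metric — and so $\alpha_2$-metric — by the classical result on chordal graphs. The remaining ``mixed'' configurations are those crossing from one strip to the other through a cross-chord or through $b_p$; for those one bounds the defect $d(u,v)+d(v,x)-d(u,x)$ by $2$ using the explicit column/row distance formula. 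Defect $2$ is genuinely attained, e.g.\ on the isometric $6$-cycle $a_j,c_j,c_{j+1},c_{j+2},a_{j+2},a_{j+1}$, which is also why the statement is about $\alpha_2$ rather than $\alpha_1$. The delicate point — and what forces the cross-chords to be only the ``straight'' ones $a_jc_j$, in particular \emph{not} $a_jc_{j\pm1}$ (those would merge $I(x_0,y_{p-1})$ with $I(x_0,z_{p-1})$ and destroy the metric triangle) — is to rule out any configuration of defect $3$, equivalently any isometric $7$-cycle and any ``seam'' quadruple that behaves like one inside a larger hypercube. Granting this, $G_p$ is an $\alpha_2$-metric graph with a metric triangle of side-length $p$, and letting $p\to\infty$ proves the lemma.
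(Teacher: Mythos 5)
The decisive step of this lemma is the verification that the constructed graph is $\alpha_2$-metric, and this is precisely the step you do not carry out (``Granting this\ldots''). Worse, the gap cannot be filled: your graph $G_p$ is \emph{not} $\alpha_2$-metric; in fact it is not $\alpha_i$-metric for any constant $i$ as $p\to\infty$. Indeed, in your two strips the only row-changing edges are $a_nb_n,a_nb_{n+1}$ and $c_nd_n,c_nd_{n+1}$, so a step from the outer row ($b$ or $d$) to the inner row ($a$ or $c$) never increases the column, and the only strip changes are the column-preserving chords $a_nc_n$ and the vertices $x_0,b_p$. From this one computes, for $2\le j<m\le p-1$: $d(b_j,a_m)=m-j+1$, $d(b_j,c_m)=m-j+2$, $d(c_m,d_j)=m-j+1$, $d(a_m,d_j)=m-j+2$, and $d(b_j,d_j)=3$ (via $b_j\sim a_j\sim c_j\sim d_j$; the two strips share no other short crossing since $x_0$ and $b_p$ are far). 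Hence for $u=b_j$, $v=a_m$, $w=c_m$, $x=d_j$ we have $v\in I(u,w)$, $w\in I(v,x)$, $v\sim w$, while $d(u,v)+d(v,x)-d(u,x)=(m-j+1)+(m-j+2)-3=2(m-j)$. Already $m=j+2$ (e.g.\ $u=b_{p-3}$, $v=a_{p-1}$, $w=c_{p-1}$, $x=d_{p-3}$ for $p\ge 6$) gives defect $4>2$, and the defect is unbounded in $p$. Structurally, your outer rows $b_1,\dots,b_p$ and $d_1,\dots,d_{p-1},b_p$ are two long parallel geodesics at mutual distance $3$, so your graph reproduces exactly the ladder phenomenon of Fig.~\ref{fig:ladder}; any gluing of two strips by ``straight rungs'' will do so. (A minor symptom of the same issue: your cycle $a_j,c_j,c_{j+1},c_{j+2},a_{j+2},a_{j+1}$ is not induced, because $a_{j+1}c_{j+1}$ is an edge.)

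For comparison, the paper's $G_p$ is a tower of $4$-cycles $x_iy_iw_iz_i$ in which consecutive levels are tied together on \emph{both} the $y$- and $z$-sides (edges $w_i\sim y_{i+1},z_{i+1}$ and $x_{i+1}\sim y_i,z_i$), so that no two long geodesics can drift apart by more than an additive $2$; and the $\alpha_2$-property is then actually proved, by induction on $p$, covering every edge by two isometric copies of $G_{p-1}$ and handling by an explicit case analysis the quadruples that leave a copy. Even if you repaired the construction, an argument of that explicit kind is unavoidable: appealing to ``the column/row distance formula bounds the defect by $2$ in mixed configurations'' is not a proof, and in your graph that claim is false.
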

\begin{proof}
    The proof is by induction on $p$.
    The vertex set of $G_p$ ($p \ge 1$) consists of four disjoint sets: $W = \{w_0,w_1,\ldots,w_{p-1}\}$; $X = \{x_0,x_1,\ldots,x_{p-1}\}$; $Y = \{y_0,y_1,\ldots,y_{p-1}\}$; and $Z = \{z_0,z_1,\ldots,z_{p-1}\}$.
    Its edge set is as follows:
    \begin{itemize}
        \item for every $i$ such that $0 \leq i < p-1$, $y_i \sim y_{i+1}$ and $z_i \sim z_{i+1}$;
        \item for every $i$ such that $0 \le i < p$, $x_i,y_i,w_i,z_i$ induce a $C_4$;
        \item for every $i$ such that $0 \le i < p -1$, $w_i \sim y_{i+1},z_{i+1}$;
        \item for every $i$ such that $0 < i \le p-1$, $x_i \sim y_{i-1},z_{i-1}$.
    \end{itemize}


    \begin{figure}[!h]
        \centering
        \resizebox{2.5in}{!}{
        \begin{tikzpicture}
            \node[circle,fill=black,inner sep=0pt,minimum size=5pt, label=above:{$x_0$}] at (0,4) {};
            \node[circle,fill=black,inner sep=0pt,minimum size=5pt, label=below left:{$y_0$}] at (-4,0) {};
            \node[circle,fill=black,inner sep=0pt,minimum size=5pt, label=below right:{$z_0$}] at (4,0) {};
            \node[circle,fill=black,inner sep=0pt,minimum size=5pt, label=below:{$w_0$}] at (0,-4) {};
            \node[circle,fill=black,inner sep=0pt,minimum size=5pt, label=above:{$w_1$}] at (0,3) {};
            \node[circle,fill=black,inner sep=0pt,minimum size=5pt, label=above left:{$y_1$}] at (-3,0) {};
            \node[circle,fill=black,inner sep=0pt,minimum size=5pt, label=above right:{$z_1$}] at (3,0) {};
            \node[circle,fill=black,inner sep=0pt,minimum size=5pt, label=below:{$x_1$}] at (0,-3) {};
            \node[circle,fill=black,inner sep=0pt,minimum size=5pt, label=above:{$x_2$}] at (0,2) {};
            \node[circle,fill=black,inner sep=0pt,minimum size=5pt, label=below left:{$y_2$}] at (-2,0) {};
            \node[circle,fill=black,inner sep=0pt,minimum size=5pt, label=below right:{$z_2$}] at (2,0) {};
            \node[circle,fill=black,inner sep=0pt,minimum size=5pt, label=below:{$w_2$}] at (0,-2) {};
            \node[circle,fill=black,inner sep=0pt,minimum size=5pt, label=above:{$w_3$}] at (0,1) {};
            \node[circle,fill=black,inner sep=0pt,minimum size=5pt, label=above left:{$y_3$}] at (-1,0) {};
            \node[circle,fill=black,inner sep=0pt,minimum size=5pt, label=above right:{$z_3$}] at (1,0) {};
            \node[circle,fill=black,inner sep=0pt,minimum size=5pt, label=below:{$x_3$}] at (0,-1) {};
            \draw[thick] (0,4) -- (-4,0) -- (0,-4) -- (4,0) -- (0,4); \draw[thick] (0,3) -- (-3,0) -- (0,-3) -- (3,0) -- (0,3); \draw[thick] (0,2) -- (-2,0) -- (0,-2) -- (2,0) -- (0,2); \draw[thick] (0,1) -- (-1,0) -- (0,-1) -- (1,0) -- (0,1);
            \draw[thick] (4,0) -- (1,0); \draw[thick] (-1,0) -- (-4,0);
            \draw (4,0) -- (0,-3) -- (-4,0); \draw (3,0) -- (0,-4) -- (-3,0);
            \draw (2,0) -- (0,3) -- (-2,0); \draw (3,0) -- (0,2) -- (-3,0);
            \draw (1,0) -- (0,-2) -- (-1,0); \draw (2,0) -- (0,-1) -- (-2,0);
        \end{tikzpicture}}
        \caption{Graph $G_4$ in the proof of Lemma~\ref{lem:unbounded-metric-triangle}.}
        \label{fig:gp}
        \vspace*{-5mm}
    \end{figure}

    See Fig.~\ref{fig:gp} for an illustration.
    We can compute from the above construction the following distances (by induction on $p$): 
    \begin{enumerate}
        \item\label{dist-formula-gp-1} $d(y_i,y_j) = d(z_i,z_j) = |j-i|$;
        \item\label{dist-formula-gp-2} $d(y_i,z_i) = 2$, and $d(y_i,z_j) = |j-i|+1$ if $i \ne j$;
        \item\label{dist-formula-gp-3} $d(x_i,y_j) = d(x_i,z_j) = 1+j-i$ if $j \ge i$, and $d(x_i,y_j) = d(x_i,z_j) = i-j$ if $j < i$;
        \item\label{dist-formula-gp-4} $d(w_i,y_j) = d(w_i,z_j) = 1+i-j$ if $j \le i$, and $d(w_i,y_j) = d(w_i,z_j) = j-i$ if $j > i$;
        \item\label{dist-formula-gp-5} $d(x_i,x_j) = d(w_i,w_j) = |j-i|+1$ if $i \ne j$;
        \item\label{dist-formula-gp-6} $d(x_i,w_j) = j-i+2$ if $i \le j$, $d(x_i,w_{i-1}) = 2$, and $d(x_i,w_j) = i-j$ if $i > j+1$.
    \end{enumerate}
    In particular, for $p \ge 2$, we can directly deduce from the above distance formulas that $x_0y_{p-1}z_{p-1}$ is a metric triangle because the open intervals $I^o(x_0,y_{p-1})= Y \setminus \{y_{p-1}\}$, $I^o(x_0,z_{p-1})= Z \setminus \{z_{p-1}\}$, $I^o(y_{p-1},z_{p-1})=\{w_{p-2},x_{p-1},w_{p-1}\}$ are pairwise disjoint.
    Furthermore, $d(x_0,y_{p-1}) = d(x_0,z_{p-1}) = p$, and $d(y_{p-1},z_{p-1}) = 2$.

    \smallskip
    It remains to prove that $G_p$ is an $\alpha_2$-metric graph.
    For $p=1$, this is true, because $G_1 = C_4$.
    From now on, we assume $p > 1$ and $G_{p-1}$ is an $\alpha_2$-metric graph.
    Then, $H_1 = G_p \setminus \{w_{p-1},x_{p-1},y_{p-1},z_{p-1}\}$ is an isometric subgraph of $G_p$ that is isomorphic to $G_{p-1}$.
    Furthermore, let us consider the automorphism $\varphi$ of $G_p$ such that, for every $i$ such that $0 \le i < p$, we have $\varphi(w_i) = x_{p-1-i}$, $\varphi(x_i) = w_{p-1-i}$, $\varphi(y_i) = y_{p-1-i}$ and $\varphi(z_i) = z_{p-1-i}$.
    We obtain that $H_2 = G_p \setminus \{w_0,x_0,y_0,z_0\} = \varphi(H_1)$ is also an isometric copy of $G_{p-1}$.
    We next observe that every edge of $G_p$ must be covered by at least one of $H_1,H_2$.
    Hence, due to the aforementioned isomorphism between $H_1$ and $H_2$, it is sufficient to prove the $\alpha_2$-metric property in $G_p$ for every $4$-tuple $s,u,v,t$ such that $uv$ is an edge of $H_1$.
    For that, let us pick arbitrary vertices $s$ and $t$ such that $d(s,u)<d(s,v)$ and $d(t,v)<d(t,u)$.
    By induction, $H_1$ is $\alpha_2$-metric.
    Therefore, we may assume in what follows that $s \notin V(H_1)$.
    Assume furthermore $s \sim u$. Then, $d(s,u)+d(u,t)-2 = d(u,t)-1 \le d(s,t)+1-1 = d(s,t)$, and we are done.
    Hence, from now on, we assume $s \not\sim u$ and, similarly, $t \not\sim v$.
    In particular, $u,v \notin N[s] \cup N[t]$.

    Suppose first $t \notin V(H_1)$.
    If $s=y_{p-1}$, $t=z_{p-1}$, then due to distance formulas (\ref{dist-formula-gp-3}) and (\ref{dist-formula-gp-4}), we obtain $u,v \in Y \cup Z$.
    Furthermore, due to distance formulas (\ref{dist-formula-gp-1}) and (\ref{dist-formula-gp-2}), $u \in Y$ and $v \in Z$.
    However, the latter is impossible because there are no edges between $Y$ and $Z$.
    Therefore, $\{s,t\} \cap \{x_{p-1},w_{p-1}\} \ne \emptyset$.
    By construction of $G_p$, for every $r \notin V(H_1)$, $D(w_{p-1},2) \subseteq D(r,2) \subseteq D(x_{p-1},2)$. 
    In particular, we may assume up to reverting the respective roles of $s$ and $t$ that $d(s,r') \le d(t,r')$ for every $r'\notin N[s] \cup N[t]$.
    As $u,v \notin N[s] \cup N[t]$, $d(s,u) < d(s,v) \le d(t,v) < d(t,u)$.
    Therefore, $d(s,u) \le d(t,u)-2$.
    Furthermore, $d(s,u) \ge d(t,u) - d(s,t) \ge d(t,u)-2$.
    This implies $d(s,u) = d(t,u)-2$, and so, $d(s,t) = 2$.
    As $\{s,t\} \cap \{x_{p-1},w_{p-1}\} \ne \emptyset$, the only possibility is $s = x_{p-1},t=w_{p-1}$.
    However, as $N(x_{p-1}) \subseteq N[y_{p-1}] \cup N[z_{p-1}] \subseteq D(w_{p-1},2)$, $|d(x_{p-1},r')-d(w_{p-1},r')| \le 1$ for every $r' \in V(H_1)$, which holds in particular for $r'=u$, a contradiction.
    As a result, $t \in V(H_1)$.

    \begin{itemize}
        \item {\it Case $s = x_{p-1}$}.
        By construction of $G_p$, $N(x_{p-1}) = N(w_{p-2})$.
        Therefore, for every $r \in V(H_1) \setminus \{w_{p-2}\}$, we have $d(r,x_{p-1}) = d(r,w_{p-2})$.
        As $s \not\sim v$, it implies $u \neq w_{p-2}$.
        In the same way, as $s \not\sim u$, it implies $v \neq w_{p-2}$.
        Under the assumptions $u \neq w_{p-2}$ and $v \neq w_{p-2}$, we obtain $d(w_{p-2},u) = d(s,u) < d(s,v) = d(w_{p-2},v)$. As $d(t,v) < d(t,u)$, it also implies $t \ne w_{p-2}$.
        Then, as by the induction hypothesis, $H_1$ is $\alpha_2$-metric, by the $\alpha_2$-metric property applied to $w_{p-2},u,v,t$ in $H_1$, $d(s,t) = d(w_{p-2},t) \ge d(w_{p-2},u)+d(u,t)-2 = d(s,u)+d(u,t)-2$.
        \item {\it Case $s = w_{p-1}$}.  Due to distance formulas (\ref{dist-formula-gp-4}) and (\ref{dist-formula-gp-5}) applied for $i \in \{p-2,p-1\}, j \le p-2$, and to the distance formula (\ref{dist-formula-gp-6}) applied for $i \le p-2, j \in \{p-2,p-1\}$, for every $r \in V(H_1) \setminus \{w_{p-2}\}$, we have $d(r,w_{p-1}) = d(r,w_{p-2})+1$.
        Furthermore, since $d(w_{p-1},w_{p-2})=2$, we get that $w_{p-1}$ is equidistant to $w_{p-2}$ and any of its neighbours in $H_1$.
        In particular, $w_{p-2} \ne u,v$.
        Then, as $d(w_{p-2},u) = d(s,u)-1<d(s,v)-1 = d(w_{p-2},v)$, and $d(t,v) < d(t,u)$, it implies $t \ne w_{p-2}$. 
        As by the induction hypothesis, $H_1$ is $\alpha_2$-metric, by the $\alpha_2$-metric property applied to $w_{p-2},u,v,t$ in $H_1$, $d(s,t) = d(w_{p-2},t)+1 \ge (d(w_{p-2},u)+d(u,t)-2)+1 =  (d(w_{p-2},u)+1) + d(u,t)-2 = d(s,u)+d(u,t)-2$.
        \item {\it Case $s = y_{p-1}$ or $s = z_{p-1}$}.
        By symmetry, we only consider the case $s = y_{p-1}$.

        \smallskip
        Let us first consider the subcase $\{u,v\} \cap \{y_{p-2},z_{p-2},w_{p-2}\} \ne \emptyset$.
        Since $s \not\sim u$, the edge $uv$ must be incident to $z_{p-2}$ and another non-neighbour of $s$.
        By construction of $G_p$, the neighbours of $z_{p-2}$ in $H_1$ that are nonadjacent to $y_{p-1}$ are: $x_{p-2}$, and (only if $p \ge 3$) $w_{p-3},z_{p-3}$. 
        Furthermore, as $y_{p-2} \in N(s) \cap N(x_{p-2})$, $d(s,x_{p-2}) = 2 = d(s,z_{p-2})$, and in the same way if $w_{p-3}$ exists then due to the distance formula (\ref{dist-formula-gp-4}), $d(s,w_{p-3}) = 2 = d(s,z_{p-2})$.
        Since we must have $d(s,u) < d(s,v)$, $u = z_{p-2}$ and $v = z_{p-3}$ is the only possibility.
        As $d(t,v) < d(t,u)$, and $d(t,v) \ge 2$, it implies that $t \notin \{w_{p-2},x_{p-2},y_{p-2},z_{p-2}\}$. Furthermore, to prove that $d(s,u)+d(u,t)-2 \le d(s,t)$, as $d(s,u) = 2$, it is necessary and sufficient to prove $d(u,t)\le d(s,t)$. For that, assume $t = y_j$, for some $j \le p-3$.
        Due to the distance formula (\ref{dist-formula-gp-1}), $d(s,t) = p-1-j$.
        If $j = p-3$ then due to the distance formula (\ref{dist-formula-gp-2}) $d(t,u) = d(t,v)$, a contradiction.
        Therefore, $j \le p-4$, and again due to the distance formula (\ref{dist-formula-gp-2}) $d(t,u) = d(t,v)+1 = p-1-j = d(s,t)$. 
        Assume now $t = z_j$, for some $j \le p-3$. 
        Due to the distance formula (\ref{dist-formula-gp-2}), $d(s,t) = p-j$, and due to the distance formula (\ref{dist-formula-gp-1}), $d(t,u) = p-2-j < d(s,t)$.
        Otherwise, $t \in \{w_j,x_j\}$, for some $j \le p-3$.
        Due to distance formulas (\ref{dist-formula-gp-3}) and (\ref{dist-formula-gp-4}), $d(s,t) = d(u,t)+1$.

        \smallskip
        From now on, $\{u,v\} \cap \{y_{p-2},z_{p-2},w_{p-2}\} = \emptyset$.
        We claim that for every $r \in V(H_1) \setminus \{y_{p-2},z_{p-2},w_{p-2}\}$, $d(r,y_{p-1}) = d(r,y_{p-2})+1$.
        Indeed, if $r \in Y$ then it follows from the distance formula (\ref{dist-formula-gp-1}) applied for $i \in \{p-2,p-1\}$, $j \le p-3$.
        In the same way if $r \in Z$, then it follows from the distance formula (\ref{dist-formula-gp-2}) applied for $i \in \{p-2,p-1\}$, $j \le p-3$.
        If $r \in W$, then the claim follows from the distance formula (\ref{dist-formula-gp-4}) applied for $j > i$, $j \in \{p-2,p-1\}$.
        Finally, if $r \in X$, then it follows from the distance formula (\ref{dist-formula-gp-3}) applied for $j \ge i$, $j \in \{p-2,p-1\}$.
        We will prove next that $t \notin \{y_{p-2},z_{p-2},w_{p-2}\}$.
        Under this assumption, as by the induction hypothesis $H_1$ is $\alpha_2$-metric, by the $\alpha_2$-metric property applied to $y_{p-2},u,v,t$ in $H_1$, we obtain $d(t,y_{p-1}) = d(t,y_{p-2})+1 \ge (d(t,u)+d(u,y_{p-2})-2)+1 = d(t,u)+d(u,y_{p-1})-2$.
        So, suppose by contradiction $t \in \{y_{p-2},z_{p-2},w_{p-2}\}$.
        As $d(y_{p-2},u) = d(s,u)-1 < d(s,v)-1 = d(y_{p-2},v)$, and $d(t,v) < d(t,u)$, we obtain $y_{p-2} \neq t$.
        Therefore, either $t = z_{p-2}$, or $t = w_{p-2}$.
        As $N_{H_1}(w_{p-2}) = \{y_{p-2},z_{p-2}\}$, and $H_1$ is an isometric subgraph of $G$, necessarily $z_{p-2} \in I(t,v)$.
        In particular, $d(z_{p-2},v) < d(z_{p-2},u)$.
        However, due to distance formulas (\ref{dist-formula-gp-1})--(\ref{dist-formula-gp-4}), it implies that $u \in Y$, $v \in Z$, which is impossible.
    \end{itemize}
    Overall, the case analysis implies that any $s,u,v,t$ satisfies the $\alpha_2$-metric property in $G_p$. \qed
\end{proof}


    \subsection{Using Injective Hulls}\label{sec:helly}

A graph is called \emph{Helly} if every family of pairwise intersecting disks has a nonempty common intersection.
It is known ({\it e.g.}, see~\cite{ChDrVa17}) that every graph $G$ can be isometrically embedded in a (unique) smallest Helly graph ${\cal H}(G)$, which is sometimes called the \emph{injective hull} of $G$.
In what follows, we will show that using some recent results on injective hulls of graphs, a better bound on the hyperbolicity of $\alpha_i$-metric graphs can be obtained. First note that, unfortunately, the injective hull of an $\alpha_i$-metric graph may not necessarily be $\alpha_i$-metric. This complicates some considerations but the difficulties can be circumvented. In Fig.\ref{fig-thinness-sharp} we give an example of an $\alpha_1$-metric graph whose injective hull is not $\alpha_1$-metric. To have other such examples, one main ingredient is the sharpness of the bound of Lemma~\ref{lem:thinness}. In particular, that means that there are $\alpha_1$-metric graphs $G$ whose injective hulls ${\cal H}(G)$ are not $\alpha_1$-metric, because every $\alpha_1$-metric graph has convex disks~\cite{YuCh1991}, however the interval thinness of a Helly graph with convex disks must be at most $1$~\cite{DuDr21-netw}. 


\begin{figure}[htb]
    \centering
    \begin{subfigure}{.45\textwidth}
        \centering
        \begin{tikzpicture}
            \node[circle,fill=black,inner sep=0pt,minimum size=5pt] at (-2,1) {};
            \node[circle,fill=black,inner sep=0pt,minimum size=5pt] at (-1,.5) {};
            \node[circle,fill=black,inner sep=0pt,minimum size=5pt] at (-1,1.5) {};
            \node[circle,fill=black,inner sep=0pt,minimum size=5pt] at (0,2) {};
            \node[circle,fill=black,inner sep=0pt,minimum size=5pt] at (0,1) {};
            \node[circle,fill=black,inner sep=0pt,minimum size=5pt] at (0,0) {};
            \node[circle,fill=black,inner sep=0pt,minimum size=5pt] at (2,1) {};
            \node[circle,fill=black,inner sep=0pt,minimum size=5pt] at (1,.5) {};
            \node[circle,fill=black,inner sep=0pt,minimum size=5pt] at (1,1.5) {};
            \draw (-2,1) -- (0,2) -- (2,1) -- (0,0) -- (-2,1);
            \draw (-1,.5) -- (-1,1.5) -- (1,.5) -- (1,1.5) -- (-1,.5);
            \draw (0,2) -- (0,0);
            \draw node at (0,-.25) {};
        \end{tikzpicture}
        \caption{An $\alpha_1$-metric graph $G$ with interval thinness 2.}
        \label{fig:thinness-sharp-a}
    \end{subfigure}\hfill
    \begin{subfigure}{.45\textwidth}
        \centering
        \begin{tikzpicture}
            \node[circle,fill=black,inner sep=0pt,minimum size=5pt] at (-2,0) {};
            \node[circle,fill=black,inner sep=0pt,minimum size=5pt] at (-1,-.5) {};
            \node[circle,fill=black,inner sep=0pt,minimum size=5pt] at (-1,.5) {};
            \node[circle,fill=black,inner sep=0pt,minimum size=5pt] at (0,1) {};
            \node[circle,fill=black,inner sep=0pt,minimum size=5pt] at (0,0) {};
            \node[circle,fill=black,inner sep=0pt,minimum size=5pt] at (0,-1) {};
            \node[circle,fill=black,inner sep=0pt,minimum size=5pt] at (2,0) {};
            \node[circle,fill=black,inner sep=0pt,minimum size=5pt] at (1,-.5) {};
            \node[circle,fill=black,inner sep=0pt,minimum size=5pt] at (1,.5) {};
            \draw (-2,0) -- (0,1) -- (2,0) -- (0,-1) -- (-2,0);
            \draw (-1,-.5) -- (-1,.5) -- (1,-.5) -- (1,.5) -- (-1,-.5);
            \draw (0,1) -- (0,-1);
            \draw[thick] (-2,0) -- (0,0); \draw[thick] (-1,-.5) -- (-.65,0) -- (-1,.5);
            \draw[thick] (2,0) -- (0,0); \draw[thick] (1,-.5) -- (.65,0) -- (1,.5);
            \draw[very thick] (-.65,0) -- (0,1) -- (.65,0) -- (0,-1) -- (-.65,0);
            \node[rectangle,fill=red,inner sep=0pt,minimum size=5pt] at (-.65,0) {};
            \node[rectangle,fill=red,inner sep=0pt,minimum size=5pt] at (.65,0) {};
        \end{tikzpicture}
        \caption{The injective hull of $G$, which is not $\alpha_1$-metric (it contains an induced $C_4$).}
        \label{fig:thinness-sharp-b}
    \end{subfigure}
    \caption{An example showing that injective hulls of $\alpha_1$-metric graphs are not always $\alpha_1$-metric.}
    \label{fig-thinness-sharp}
    \vspace*{-5mm}
\end{figure}

\medskip
We recall the following properties of injective hulls. 

\begin{lemma}[see Corollary 2 in~\cite{GDL22}]\label{lem:hyp}
A graph $G$ is $\delta$-hyperbolic if and only if its injective hull ${\cal H}(G)$ is $\delta$-hyperbolic.
\end{lemma}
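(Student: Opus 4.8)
The statement to prove is Lemma~\ref{lem:hyp}: a graph $G$ is $\delta$-hyperbolic if and only if its injective hull ${\cal H}(G)$ is $\delta$-hyperbolic. It's cited from~\cite{GDL22}, so the proof should be short and reference-based, but here is how I would argue it directly.

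\medskip

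The plan is to exploit two facts: first, $G$ embeds isometrically into ${\cal H}(G)$; second, hyperbolicity (via the $\beta_{2\delta}$-metric four-point condition) is a purely metric notion that is inherited by isometric subspaces, and conversely the injective hull does not increase it. The ``only if'' direction is the substantive one. The ``if'' direction is immediate: the four-point condition defining $\delta$-hyperbolicity quantifies over all quadruples of vertices, so if ${\cal H}(G)$ satisfies it then in particular every quadruple of vertices of $G$ does, and since the embedding $G \hookrightarrow {\cal H}(G)$ is isometric, the distances agree and $G$ is $\delta$-hyperbolic.

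\medskip

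For the ``only if'' direction I would proceed as follows. First recall that ${\cal H}(G)$ is the injective (tight) span of the metric space $(V, d_G)$; by construction every point of ${\cal H}(G)$ realizes a function $f$ on $V$ with $f(x) + f(y) \ge d_G(x,y)$ and $f(x) = \sup_y (d_G(x,y) - f(y))$, and $d_{{\cal H}(G)}(f,g) = \sup_{x \in V} |f(x) - g(x)|$. The key structural input is that for any point $p \in {\cal H}(G)$ and any $\epsilon > 0$ (here, since we are in the graph/integer setting, it suffices to take $\epsilon$ to be the relevant half-integer gap), there is a vertex $v \in V$ with $d_{{\cal H}(G)}(p, v) + \text{something}$ controlled — more precisely, for every two points $p, q \in {\cal H}(G)$ and every vertex basepoint, the Gromov product $(p \mid q)_z$ is approximated arbitrarily well by Gromov products of vertices of $G$. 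Concretely: $\delta$-hyperbolicity is equivalent to the condition that for all points $p,q,r,s$, $(p\mid r)_s \ge \min\{(p\mid q)_s, (q\mid r)_s\} - \delta$. Since the tight span is geodesic and each of its points lies on a geodesic between two vertices of $G$ (indeed ${\cal H}(G)$ is generated as the union of geodesics between vertices of $V$, up to taking suprema), one reduces the four-point condition on ${\cal H}(G)$ to the four-point condition on quadruples lying on such geodesics, and then to quadruples of vertices of $V$ by a continuity/approximation argument, using that Gromov products vary $1$-Lipschitzly along geodesics. Because $G$ is $\delta$-hyperbolic, the vertex four-point condition holds with constant $\delta$, and the approximation introduces no loss in the limit (or, in the discrete setting, the half-integrality of hyperbolicity lets one conclude exactly).

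\medskip

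The main obstacle is the reduction of the four-point condition from arbitrary points of ${\cal H}(G)$ to vertices of $G$ without loss in the hyperbolicity constant; naive approximation arguments tend to lose an additive term. The clean way around this is to invoke the known fact (from the theory of injective hulls, e.g.~\cite{GDL22} or the literature on tight spans) that ${\cal H}(G)$ is itself a graph whose vertex set contains $V$ isometrically and on which the same four-point inequalities that hold on $V$ propagate — equivalently, that $\delta$-hyperbolicity of a metric space equals $\delta$-hyperbolicity of its injective hull because the injective hull of a $\delta$-hyperbolic space is $\delta$-hyperbolic (a result essentially due to Lang). Given that this lemma is explicitly attributed to~\cite{GDL22}, the honest proof is simply to cite that reference; the sketch above is the route one would take to reprove it from scratch.
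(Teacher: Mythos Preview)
The paper does not prove this lemma at all; it is stated with attribution to~\cite{GDL22} and used as a black box, so your final conclusion --- that the honest proof is simply to cite that reference --- is exactly what the paper does.

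That said, your attempted direct argument for the ``only if'' direction is not a proof as written, and you correctly flag the gap yourself: reducing the four-point condition on ${\cal H}(G)$ to quadruples of vertices of $G$ \emph{without any loss} in the constant is precisely the nontrivial content of the result, and the hand-wave about ``continuity/approximation'' and ``Gromov products vary $1$-Lipschitzly along geodesics'' does not close it (Lipschitz control gives bounded loss, not zero loss). The fact recorded later as Lemma~\ref{lem:max-sp} --- that every pair of points in ${\cal H}(G)$ lies on a geodesic between two vertices of $G$ --- is the right structural ingredient, but converting that into an exact four-point inequality for four arbitrary points of ${\cal H}(G)$ still requires the argument carried out in~\cite{GDL22} (or Lang's result for geodesic spaces). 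So: your citation is correct and matches the paper; your sketch is an outline of where the difficulty lies rather than a proof.
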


\begin{lemma}[see Theorem 2 in~\cite{DrGu19}]\label{lem:helly}
If $H$ is Helly and its interval thinness is at most $\tau$, then it is $\left\lceil\frac{\tau}{2}\right\rceil$-hyperbolic.
\end{lemma}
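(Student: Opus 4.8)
The plan is to verify the four‑point condition directly, using the Helly property to place, on two shortest paths issuing from a common base vertex, exact ``partial medians'' sitting at the \emph{same} level, and then using the interval‑thinness hypothesis to bound the distance between them. (Routing the argument through the thinness of geodesic triangles would be lossy here --- that approach seems to yield only $2\tau$‑hyperbolicity, since in such graphs geodesic triangles can be up to roughly $2\tau$‑thin --- so I would avoid it.) Write $\kappa(H)\le\tau$, and recall the standard four‑point reformulation: $H$ is $\delta$‑hyperbolic iff $(u\mid v)_x\ge\min\{(u\mid w)_x,(v\mid w)_x\}-\delta$ for all vertices $u,v,w,x$. So, assuming without loss of generality that $a:=(u\mid w)_x\le(v\mid w)_x$, it suffices to prove $(u\mid v)_x\ge a-\lceil\tau/2\rceil$.

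Step 1 (Helly construction). Since $0\le a\le\min\{d(x,w),d(u,x)\}$, the three disks $D(x,\lfloor a\rfloor)$, $D(w,d(x,w)-\lfloor a\rfloor)$ and $D(u,d(u,x)-\lfloor a\rfloor)$ pairwise intersect: for the first two and for the first and third the radii sum exactly to the distance between the centres, and for the pair centred at $w$ and $u$ one only needs $d(u,w)\le d(x,w)+d(u,x)-2\lfloor a\rfloor$, which is just $\lfloor a\rfloor\le a$. By the Helly property there is a vertex $p$ in all three disks, and the triangle inequality then forces $p\in I(x,w)\cap I(x,u)$ with $d(x,p)=\lfloor a\rfloor$ and $d(u,p)=d(u,x)-\lfloor a\rfloor$. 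In exactly the same way (replacing $u$ by $v$, and using $\lfloor a\rfloor\le(v\mid w)_x$ for the pair centred at $w$ and $v$) there is a vertex $q\in I(x,w)\cap I(x,v)$ with $d(x,q)=\lfloor a\rfloor$ and $d(v,q)=d(v,x)-\lfloor a\rfloor$.

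Step 2 (interval thinness and conclusion). Now $p,q\in S_{\lfloor a\rfloor}(x,w)$, so $d(p,q)\le\kappa(H)\le\tau$. Hence $d(u,v)\le d(u,p)+d(p,q)+d(q,v)\le(d(u,x)-\lfloor a\rfloor)+\tau+(d(v,x)-\lfloor a\rfloor)$, which gives $(u\mid v)_x=\frac{1}{2}\bigl(d(u,x)+d(v,x)-d(u,v)\bigr)\ge\lfloor a\rfloor-\tau/2$. If $a$ is an integer this is $\ge a-\tau/2\ge a-\lceil\tau/2\rceil$ and we are done; and a parity comparison of the integer $2(u\mid v)_x$ against $2\lfloor a\rfloor-\tau$ settles as well the cases where $\tau$ is odd or where $(u\mid v)_x$ is a half‑integer.

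The single remaining case --- $a$ a half‑integer, $\tau$ even, and $(u\mid v)_x$ an integer --- is where the ceiling in the statement (and essentially all of the difficulty) lives: there the bound above falls short of the target by exactly $\frac{1}{2}$, and one must extract an extra half‑unit. I would try to do this by exploiting that a half‑integral $a$ forces the perimeter of the triangle $uwx$ to be odd, and then choosing the partial median $p$ (or, better, the family of disks defining it) more carefully, or by invoking interval thinness on a slice better adapted to that parity. Getting this last half‑unit --- equivalently, pinning the constant down to exactly $\lceil\tau/2\rceil$ rather than $\lceil\tau/2\rceil+\frac{1}{2}$ --- is the step I expect to be the real obstacle; everything else is a routine, if careful, combination of the Helly property with the interval‑thinness hypothesis.
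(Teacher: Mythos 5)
You are proving a statement the paper itself does not prove: Lemma~\ref{lem:helly} is imported from~\cite{DrGu19} and used as a black box, so there is no internal proof to compare against; your proposal has to stand on its own, and as written it does not. The positive part is fine: your Helly construction is correct (the three disks do pairwise intersect, the common vertex $p$ is forced by the triangle inequality to lie in $I(x,w)\cap I(x,u)$ at level $\lfloor a\rfloor$, likewise $q$ for $v$), and interval thinness then gives $d(p,q)\le\tau$ and $(u\mid v)_x\ge\lfloor a\rfloor-\tau/2$. But this yields the claimed constant only when $\tau$ is odd, or $a$ is integral, or the parity of $2(u\mid v)_x$ happens to help. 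In the residual case ($\tau$ even, $a=(u\mid w)_x$ half-integral, $(u\mid v)_x$ integral) you obtain only $(u\mid v)_x\ge a-\tau/2-\tfrac12$, i.e.\ $(\lceil\tau/2\rceil+\tfrac12)$-hyperbolicity, and you explicitly leave the missing half-unit as something you ``would try'' to extract. That is the whole content of the lemma's ceiling, and it is a genuine gap, not a routine detail: with the weaker constant, Theorem~\ref{thm:alpha-1} (thinness of ${\cal H}(G)$ at most $2$ implies $1$-hyperbolic, claimed sharp) would only give $3/2$-hyperbolicity, and Theorem~\ref{thm:main} would also lose half a unit.

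Moreover, the remedies you sketch do not obviously close it. The natural parity/rebasing idea fails: the target inequality $d(u,v)+d(w,x)-\max\{d(u,x)+d(v,w),\,d(u,w)+d(v,x)\}\le 2\delta$ can be attacked from any of the four base points, and the half-unit loss at base $x$ occurs exactly when the perimeter of the triangle $uwx$ is odd, with the analogous losses at bases $w,v,u$ governed by the perimeters of $vwx$, $uvw$, $uvx$; but all four perimeters can be odd simultaneously (for instance when all six pairwise distances are odd), so no choice of base or of the slice's level ($\lfloor a\rfloor$ versus $\lceil a\rceil$, which costs the same half-unit on the radius side) escapes the bad case. Extracting the last $\tfrac12$ therefore needs an additional structural input about Helly graphs beyond ``pairwise intersecting disks meet'' --- this is precisely what the proof in~\cite{DrGu19} supplies and what is missing from your argument.
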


\begin{lemma}[see Proposition 2 in~\cite{DrGu21}]\label{lem:max-sp}
If $H$ is the injective hull of $G$ and $x,y \in V(H)$ are arbitrary, then there exist $x',y' \in V(G)$ such that any shortest $(x,y)$-path in $H$ is contained in a shortest $(x',y')$-path in $H$.
\end{lemma}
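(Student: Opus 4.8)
The plan is to work in the standard tight-span realization of the injective hull: I would encode each vertex $f\in V(H)$ of $H:=\mathcal{H}(G)$ by the function $u\mapsto d_H(f,u)$ on $V(G)$ (so that a vertex $v\in V(G)$ corresponds to $d_G(v,\cdot)$), recover the metric via $d_H(f,g)=\max_{u\in V(G)}|d_H(f,u)-d_H(g,u)|$, and use the defining minimality property of $\mathcal{H}(G)$ in the form of \emph{extremality}: for every $f\in V(H)$ and every $u\in V(G)$ there is a $v\in V(G)$ with $d_H(f,u)+d_H(f,v)=d_G(u,v)$. Fixing $x,y\in V(H)$, writing $D:=d_H(x,y)$ and abbreviating $f(\cdot):=d_H(x,\cdot)$, $g(\cdot):=d_H(y,\cdot)$ on $V(G)$, it is enough to find $x',y'\in V(G)$ with $d_H(x',y')=d_H(x',x)+d_H(x,y)+d_H(y,y')$; then $x$ and $y$ lie, in this order, on a common shortest $(x',y')$-path, and prepending a shortest $(x',x)$-path and appending a shortest $(y,y')$-path to any prescribed shortest $(x,y)$-path produces the required shortest $(x',y')$-path containing it.

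First I would extend past $x$, away from $y$. I would pick $u_0\in V(G)$ realizing $\max_u|f(u)-g(u)|=D$ and argue that $u_0$ can be chosen with $f(u_0)-g(u_0)=D$ (rather than $g(u_0)-f(u_0)=D$): if only the reversed sign occurred, extremality of $g$ at $u_0$ would give $w$ with $g(u_0)+g(w)=d_G(u_0,w)$, hence $f(w)\ge g(u_0)+g(w)-f(u_0)=D+g(w)$, and combined with $f(w)-g(w)\le D$ this forces $f(w)-g(w)=D$. With such a $u_0$ in hand, extremality of $f$ at $u_0$ yields $x'\in V(G)$ with $f(u_0)+f(x')=d_G(u_0,x')$; then $g(x')\ge d_G(u_0,x')-g(u_0)=D+f(x')$, and $g(x')-f(x')\le D$ upgrades this to $g(x')=f(x')+D$, i.e. $d_H(x',y)=d_H(x',x)+d_H(x,y)$, so that $x\in I_H(x',y)$.

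Next I would extend past $y$, away from $x'$: extremality of $g$ at $x'$ gives $y'\in V(G)$ with $g(x')+g(y')=d_G(x',y')$, i.e. $d_H(x',y)+d_H(y,y')=d_H(x',y')$, hence $y\in I_H(x',y')$. Adding the two identities gives exactly $d_H(x',y')=d_H(x',x)+d_H(x,y)+d_H(y,y')$, completing the proof via the observation of the first paragraph. The hard part, I expect, is the first extension step: one must notice that the $\ell_\infty$-distance between two tight-span points is always attained with \emph{either} sign, so the support vertex $u_0$ may be forced to satisfy $f(u_0)-g(u_0)=D$, and that extremality at $u_0$ is precisely what turns the automatic inequality $g(x')\ge f(x')+D$ into an equality; controlling these signs is what guarantees the correct order $x',x,y,y'$, i.e. that the shortest $(x',y')$-path we build really does run through the prescribed shortest $(x,y)$-path rather than some other geodesic.
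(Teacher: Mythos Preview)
The paper does not prove Lemma~\ref{lem:max-sp}; it is quoted verbatim from~\cite{DrGu21} and used as a black box, so there is no in-paper argument to compare against. Your proof is correct and is essentially the standard tight-span argument one would expect: identify $V(\mathcal H(G))$ with the integer-valued extremal functions on $V(G)$, use the $\ell_\infty$ distance formula $d_H(f,g)=\max_u|f(u)-g(u)|$, and exploit extremality twice to push first $x$ and then $y$ onto a geodesic between two real vertices $x',y'\in V(G)$. The sign-flipping observation you highlight (that the maximum $|f(u)-g(u)|$ is always attained with the sign of your choice) is exactly the point that makes the two extensions compose in the right order.

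One minor presentational remark: you silently use that $d_H(f,u)=f(u)$ for $u\in V(G)$ and that extremal functions are $1$-Lipschitz (needed for the inequality $g(x')\ge d_G(u_0,x')-g(u_0)$ via $g(u_0)+g(x')\ge d_G(u_0,x')$). Both are immediate from the defining properties of extremal functions, but since you are writing a self-contained proof it would be cleaner to state them up front rather than leave them implicit.
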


We complement these above properties with the following simple observation about distances in the injective hull of a graph:
\begin{lemma}\label{lem:dist-in-hull}
    If $H$ is the injective hull of $G$ and $x,y$ are vertices of $H$ such that $d(x,v) \le d(y,v) + \lambda$ for every $v \in V(G)$, then $d(x,y) \le \lambda$.
\end{lemma}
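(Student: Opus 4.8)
The plan is to exploit the defining extremal/minimality property of the injective hull: $\mathcal{H}(G)$ is the \emph{smallest} Helly graph containing $G$ isometrically, which concretely means that every vertex of $\mathcal{H}(G)$ can be described by its ``Hahn--Banach type'' distance vector to $V(G)$, and that one cannot shrink any such vector. First I would recall the standard coordinatization of the injective hull: each vertex $z \in V(\mathcal{H}(G))$ corresponds to a function $f_z : V(G) \to \mathbb{N}$ given by $f_z(v) = d_{\mathcal{H}(G)}(z,v)$, and these functions are exactly the pointwise-minimal elements among all $f : V(G) \to \mathbb{N}$ satisfying $f(u) + f(v) \ge d_G(u,v)$ for all $u,v \in V(G)$ (the so-called Isbell condition); moreover $d_{\mathcal{H}(G)}(z,z') = \sup_{v \in V(G)} |f_z(v) - f_{z'}(v)|$.

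Given this, the hypothesis states precisely that $f_x(v) \le f_y(v) + \lambda$ for every $v \in V(G)$. I would then show directly that the function $g := \min\{f_x, \, f_y + \lambda\}$ (pointwise minimum) still satisfies the Isbell condition: for any $u,v$, if $g(u) = f_x(u)$ and $g(v) = f_x(v)$ we use admissibility of $f_x$; if both pick up the $f_y + \lambda$ branch we use admissibility of $f_y$ plus $2\lambda \ge 0$; and in the mixed case, say $g(u) = f_x(u)$ and $g(v) = f_y(v) + \lambda$, we use $g(u) + g(v) = f_x(u) + f_y(v) + \lambda \ge f_x(u) + f_y(v) \ge$ \dots here I would invoke the fact that $f_x$ and $f_y$, being distance vectors of actual vertices, jointly satisfy $f_x(u) + f_y(v) \ge d_{\mathcal{H}(G)}(x,v) \ge d_G(u,v) - f_x(u)$ is not quite what I want — more cleanly, $f_x(u) + f_y(v) \ge d_{\mathcal H(G)}(x,y) - $ something; the robust route is simply $f_x(u) + f_y(v) \ge |f_x(u) - f_y(u)| \cdot 0 + \dots$. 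Let me instead argue: $g(u) + g(v) \ge f_x(u) + f_y(v) \ge d_{\mathcal H(G)}(u,v)$ is false in general, so the genuinely correct mixed-case bound is $f_x(u) + f_y(v) + \lambda \ge f_x(u) + (f_x(v) - \lambda) + \lambda = f_x(u)+f_x(v) \ge d_G(u,v)$, using the hypothesis in the form $f_y(v) \ge f_x(v) - \lambda$. So $g$ is admissible. By pointwise minimality of $f_x$ we get $f_x \le g \le f_x$, hence $f_x = g$, i.e. $f_x(v) \le f_y(v) + \lambda$ with equality forced nowhere but implying $f_x \le f_y + \lambda$ is \emph{tight enough}; combined with the trivial reverse-type estimate we then compute $d_{\mathcal H(G)}(x,y) = \sup_v |f_x(v) - f_y(v)|$ and bound it by $\lambda$: one direction $f_x(v) - f_y(v) \le \lambda$ is the hypothesis, and the other direction $f_y(v) - f_x(v) \le \lambda$ follows because $\min\{f_y, f_x + \lambda\}$ is likewise admissible, forcing $f_y \le f_x + \lambda$ by minimality of $f_y$.

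The main obstacle is making the coordinatization rigorous without re-deriving the whole theory of injective hulls: I would lean on the cited source for the correspondence between vertices of $\mathcal{H}(G)$ and minimal Isbell-admissible functions, and for the formula $d_{\mathcal{H}(G)}(z,z') = \sup_v |f_z(v) - f_{z'}(v)|$, so that the argument reduces to the short symmetric computation above. An alternative, purely graph-theoretic proof that avoids coordinates entirely: since $\mathcal H(G)$ is Helly, the disks $D(y,\,d(y,v)+ \text{?})$... — but the cleanest self-contained version uses that in a Helly graph, for any vertex $x$, the family $\{D(v, d(y,v)+\lambda) : v \in V(G)\} \cup \{D(x,0)\}$ has the pairwise-intersection property (each pair of the $D(v,\cdot)$'s intersects because $\mathcal H(G)$ is geodesic and $d(v,v') \le (d(y,v)+\lambda)+(d(y,v')+\lambda)$, and $D(x,0)=\{x\}$ meets $D(v,d(y,v)+\lambda)$ precisely by hypothesis), so Helly gives a common point, which must be $x$, lying in every $D(v,d(y,v)+\lambda)$; but $y$ realizes $\inf$ of such a configuration, and a minimality argument of $\mathcal{H}(G)$ over $G$ then yields $d(x,y)\le\lambda$. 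I would present whichever of the two is shorter given the exact statements available from the references, most likely the Isbell-function version since Lemma~\ref{lem:max-sp} and the surrounding discussion already presume familiarity with that machinery.
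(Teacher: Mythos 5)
Your Isbell-function argument is essentially correct, and it is a genuinely different route from the paper's. The paper derives the lemma in three lines from Lemma~\ref{lem:max-sp}: since any shortest $(x,y)$-path of $H$ extends to a shortest $(u,v)$-path with $u,v\in V(G)$, both $x$ and $y$ lie on a common geodesic from $v\in V(G)$, so $d(x,y)=|d(v,x)-d(v,y)|\le\lambda$ directly from the hypothesis. You instead work with the coordinatization of $\mathcal H(G)$ by extremal (pointwise-minimal admissible) functions and the $\ell_\infty$ distance formula $d_{\mathcal H(G)}(z,z')=\max_{v\in V(G)}|f_z(v)-f_{z'}(v)|$; the one genuinely needed step is the one you state last, namely that $h:=\min\{f_y,\,f_x+\lambda\}$ is admissible (in the mixed case $h(u)=f_y(u)$, $h(v)=f_x(v)+\lambda$ one uses the hypothesis in the form $f_y(u)+\lambda\ge f_x(u)$ together with admissibility of $f_x$), whence minimality of $f_y$ forces $f_y\le f_x+\lambda$, and the distance formula gives $d(x,y)\le\lambda$. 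Two remarks: the detour with $g=\min\{f_x,\,f_y+\lambda\}$ is vacuous, since $g=f_x$ already by hypothesis, so minimality of $f_x$ yields nothing and that passage (including the visible false starts) should simply be deleted; and the sketched ``Helly-disk'' alternative at the end is not a proof as written (the final ``minimality of $\mathcal H(G)$ over $G$'' step is hand-waving). Your route buys independence from Lemma~\ref{lem:max-sp}, at the cost of importing the extremal-function description of the hull and the sup-metric distance formula, which are standard but not stated in the paper and would need an explicit citation; the paper's route is shorter precisely because Lemma~\ref{lem:max-sp} is already on the table and is reused again in Theorem~\ref{thm:alpha-1}.
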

\begin{proof}
    By Lemma~\ref{lem:max-sp}, there exist $u,v \in V(G)$ such that any shortest $(x,y)$-path in $H$ is contained in a shortest $(u,v)$-path in $H$.
    Without loss of generality, $d(v,y) \le d(v,x)$.
    Since $d(v,x) \le d(v,y) + \lambda$, $d(x,y) = d(v,x) - d(v,y) \le \lambda$.
\end{proof}

We are now ready to prove our main result in this section.

\begin{theorem}\label{thm:main}
If $G=(V,E)$ is $\alpha_i$-metric, then it is $\delta$-hyperbolic for some $\delta \leq i + \left\lceil\frac{i+1}{2}\right\rceil \leq \frac 3 2 \cdot (i+1)$.
\end{theorem}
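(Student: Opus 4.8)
The plan is to exploit the injective hull $\cH(G)$ as a bridge: by Lemma~\ref{lem:hyp}, it suffices to bound the hyperbolicity of $\cH(G)$, and since $\cH(G)$ is Helly, Lemma~\ref{lem:helly} reduces everything to bounding the interval thinness $\kappa(\cH(G))$. Concretely, if we can show $\kappa(\cH(G)) \le i + (i+1) = 2i+1$, then $\cH(G)$ is $\lceil (2i+1)/2 \rceil = (i+1)$-hyperbolic... but that is weaker than what we want. So the real target is sharper: we must show $\kappa(\cH(G)) \le 2i + 2\lceil (i+1)/2\rceil$ is \emph{not} the route; instead we should bound the diameter of a slice in $\cH(G)$ directly in terms of the $\alpha_i$-metric property of $G$ itself, getting $\kappa(\cH(G)) \le i + \kappa(G) \le i + (i+1) = 2i+1$ would give hyperbolicity $i+1$. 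To reach $i + \lceil (i+1)/2 \rceil$ we instead want to apply Lemma~\ref{lem:helly} with $\tau$ chosen so that $i + \lceil \tau/2 \rceil$ matches; this suggests we should prove $\kappa(\cH(G)) \le \tau$ with $\tau = i+1$, but then bound hyperbolicity more cleverly. Let me restate the actual plan.

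The plan is as follows. First, take two vertices $x,y$ of $\cH(G)$ lying in a common slice $S_k(a,b)$ of $\cH(G)$, i.e. $d(a,x)=d(a,y)=k$, $d(b,x)=d(b,y)=d(a,b)-k$. By Lemma~\ref{lem:max-sp}, I may assume $a,b \in V(G)$ (pushing $x,y$ onto a shortest path between original vertices of $G$). The key step is then to find a vertex $z^* \in V(G)$ (or with controlled distance to $V(G)$) that is ``close to both $x$ and $y$'': using Lemma~\ref{lem:three-balls} applied inside $G$ to three pairwise intersecting disks whose centers are $a$, $b$, and an $x$-witness, I can produce a vertex of $G$ within distance $k$ of $a$, within distance $d(a,b)-k$ of $b$, and within $i$ of the point realizing the nearest projection. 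Then Lemma~\ref{lem:dist-in-hull} converts ``$\alpha_i$-type'' inequalities valid for all $v\in V(G)$ into genuine distance bounds in $\cH(G)$. The upshot should be a vertex $z^*$ with $d(x,z^*)\le i$ and $z^*$ in the same slice, so that applying the interval-thinness bound (Lemma~\ref{lem:thinness}) — which holds for $G$ but needs a Helly-graph analogue, or rather an argument analogous to Lemma~\ref{lm:auxiliary-GD} transported via Lemma~\ref{lem:dist-in-hull} — yields $d(x,y)\le i + \lceil (i+1)/2\rceil$. The ceiling arises because in $\cH(G)$ one can take the \emph{midpoint} of a shortest path, halving one of the contributions; this is precisely where Helly-ness (real, not discrete, geodesic bisection up to rounding) buys the improvement over $2i+1$.

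So the detailed steps, in order: (1) Set up $x,y \in S_k(a,b,\cH(G))$ with $a,b \in V(G)$ via Lemma~\ref{lem:max-sp}. (2) For $v \in V(G)$, derive the key inequality $d(x,v) \ge d(x,a) + \text{(something)} - i$ type bounds using the $\alpha_i$-metric property of $G$ along $G$-shortest paths, combined with the fact that $d(\cdot,v)$ restricted to $V(G)$ agrees with $d_G$. (3) Use Lemma~\ref{lem:three-balls} with radii $r_a = k$, $r_b = d(a,b)-k$, and $r$ on a third center chosen as a $G$-vertex near $x$, to get $z^* \in V(G)$ with $d(a,z^*)\le k$, $d(b,z^*)\le d(a,b)-k$, and $d(x,z^*)$-type control with additive loss $i$. (4) Observe $z^*$ lies in (or very near) the slice $S_k(a,b)$, and bound $d(z^*,y)$ by the same argument on the other side, or by a Lemma~\ref{lm:auxiliary-GD}-style estimate lifted to $\cH(G)$. (5) Combine with a halving argument at the midpoint of a geodesic in $\cH(G)$ — valid because $\cH(G)$ is Helly, so midpoints of even-length geodesics exist and pairwise-intersecting disks centered at the endpoints and the far vertex meet — to shave the bound from $2(i+1)$ down to $i + \lceil (i+1)/2 \rceil$. (6) Conclude $\kappa(\cH(G)) \le i + \lceil (i+1)/2 \rceil$... and here I must be careful, because Lemma~\ref{lem:helly} would then give only $\lceil (i + \lceil(i+1)/2\rceil)/2 \rceil$-hyperbolicity; so in fact the correct final move is to bound hyperbolicity \emph{directly} via the slice diameter without the extra factor $1/2$, i.e. the theorem's $\delta = i + \lceil (i+1)/2 \rceil$ should come from a direct Gromov-product / geodesic-triangle estimate in $\cH(G)$ in the style of Theorem~\ref{thm:thinness}, where the slice-diameter bound enters once (giving the $i+1$ term via Lemma~\ref{lem:thinness}-type reasoning in $\cH(G)$, rounded) and Lemma~\ref{lem:three-balls} enters once (giving the $i$ term), rather than through the crude Helly$\,\to\,$hyperbolic conversion.

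The main obstacle, and the subtle point the proof must handle with care, is that $\cH(G)$ is \emph{not} $\alpha_i$-metric (as the authors stress, with the $C_4$-in-the-injective-hull example), so none of Lemmas~\ref{lem:three-balls}, \ref{lm:auxiliary-GD}, \ref{lem:thinness} can be applied to $\cH(G)$ directly. Everything must be proved for $G$ and then transported to $\cH(G)$ using only two legitimate tools: the isometric embedding $G \hookrightarrow \cH(G)$, and Lemmas~\ref{lem:max-sp}/\ref{lem:dist-in-hull}, which let us (a) assume the endpoints of the geodesic in question come from $V(G)$, and (b) upgrade ``$d(x,v)\le d(y,v)+\lambda$ for all $v\in V(G)$'' to ``$d_{\cH(G)}(x,y)\le\lambda$''. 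Threading the $\alpha_i$-argument through this two-step transport, while keeping the additive losses down to exactly $i$ from Lemma~\ref{lem:three-balls} and exactly $\lceil(i+1)/2\rceil$ from the thinness-plus-midpoint estimate, is the delicate core of the proof; getting a sloppy bound like $2i+2$ or $\frac52(i+1)$ is easy, but matching the stated $i + \lceil\frac{i+1}{2}\rceil$ requires using the Helly midpoint property to halve precisely the interval-thinness contribution and nothing else.
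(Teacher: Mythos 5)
Your toolbox is the right one (Lemmas~\ref{lem:hyp}, \ref{lem:helly}, \ref{lem:max-sp}, \ref{lem:dist-in-hull}, \ref{lem:three-balls}, \ref{lem:thinness}), and the high-level idea of bounding a slice of $\cH(G)$ while transporting $\alpha_i$-facts from $G$ is exactly the paper's, but the plan as you finally state it has a genuine gap, caused by a bookkeeping error about where the factor $\tfrac12$ enters. The correct architecture is: exhibit one vertex $c$ of $\cH(G)$ with $S_k(u,v,\cH(G))\subseteq D(c,\delta)$, conclude the slice has diameter at most $2\delta$, and then let Lemma~\ref{lem:helly} convert interval thinness $2\delta$ of the Helly graph $\cH(G)$ into $\delta$-hyperbolicity. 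You explicitly reject this route and instead demand the much stronger bound $\kappa(\cH(G))\le i+\lceil\frac{i+1}{2}\rceil$ together with a ``factor-free'' conversion from slice diameter to hyperbolicity via a Gromov-product estimate ``in the style of Theorem~\ref{thm:thinness}''; neither half is available --- the stronger slice bound is nowhere established, and the Theorem~\ref{thm:thinness}-style argument cannot be run inside $\cH(G)$ because, as you yourself stress, $\cH(G)$ need not be $\alpha_i$-metric. (Also, your remark that $\kappa(\cH(G))\le 2i+1$ would only give $(i+1)$-hyperbolicity, ``weaker than what we want'', is backwards: $i+1\le i+\lceil\frac{i+1}{2}\rceil$, so that conclusion would in fact suffice --- but you have no proof of that thinness bound either.)

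Two concrete mechanisms are missing. First, the halving does not come from a geodesic-midpoint argument: it comes from applying the Helly property of $\cH(G)$ to the family of disks $D(x,\lceil\frac{i+1}{2}\rceil)$ for $x\in S_k(u,v,G)$, which pairwise intersect by Lemma~\ref{lem:thinness}; their common point is the center $c$. Second, your step (3) wants to feed Lemma~\ref{lem:three-balls} a third disk centered at ``a $G$-vertex near $x$'', but a hull vertex need not be near any vertex of $G$ (hull vertices can be far from $V$, e.g.\ in hulls of long even cycles). The paper instead applies Lemma~\ref{lem:three-balls} once per witness $w\in V$, with third disk $D(w,d(y,w))$ where $y$ is the hull vertex in the slice: this produces a ($w$-dependent) vertex $x_w\in S_k(u,v,G)$ with $d(x_w,w)\le d(y,w)+i$, hence $d(c,w)\le d(y,w)+i+\lceil\frac{i+1}{2}\rceil$ for every $w\in V$, and only then does Lemma~\ref{lem:dist-in-hull} convert this family of inequalities into $d(y,c)\le\delta$. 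Without this pointwise-in-$w$ use of the three-balls lemma, Lemma~\ref{lem:dist-in-hull} has nothing to act on, and your chain of estimates does not close.
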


\begin{proof}
Set $\delta = i + \left\lceil\frac{i+1}{2}\right\rceil$.
By Lemma~\ref{lem:hyp}, $G$ and its injective hull ${\cal H}(G)$ have the same hyperbolicity constant. 
Therefore, we are left proving that ${\cal H}(G)$ is $\delta$-hyperbolic. 
For that, by Lemma~\ref{lem:helly}, it is sufficient to prove that the interval thinness of ${\cal H}(G)$ is at most $2\delta$. 
Furthermore, in order to bound the interval thinness of ${\cal H}(G)$, by Lemma~\ref{lem:max-sp}, it is sufficient to consider the shortest-paths between vertices $u,v \in V$ ({\it i.e.}, we discard the pairs of vertices with at least one vertex in ${\cal H}(G) \setminus V$). 

Thus, from now on, let $u,v \in V$ be fixed, and let $S_k(u,v,{\cal H}(G))$ be some fixed slice of $I_{{\cal H}(G)}(u,v)$.
By Lemma~\ref{lem:thinness}, the disks $D_G(x,\left\lceil\frac{i+1}{2}\right\rceil)$, for every $x \in S_k(u,v,G)$, pairwise intersect.
Therefore, by the Helly property, there exists a vertex $c$ of ${\cal H}(G)$ such that $S_k(u,v,G) \subseteq D_{{\cal H}(G)}(c,\left\lceil\frac{i+1}{2}\right\rceil)$. Then, in order to prove that every two vertices of $S_k(u,v,{\cal H}(G))$ are pairwise at distance at most $2\delta$, it suffices to prove that $S_k(u,v,{\cal H}(G)) \subseteq D_{{\cal H}(G)}(c,\delta)$.
For that, let $y \in S_k(u,v,{\cal H}(G)) \setminus V$ be arbitrary.
Let us define $r_y(w) = d(y,w)$ for every $w \in V \setminus \{u,v\}$. 
For each $w \in V \setminus \{u,v\}$, by Lemma~\ref{lem:three-balls}, there exists some $x_w \in S_k(u,v,G)$ such that $d(x_w,w) \leq r_y(w) + i$.
It implies $d(c,w) \leq \left\lceil \frac{i+1}{2}\right\rceil + r_y(w) + i = r_y(w) + \delta$.
By Lemma~\ref{lem:dist-in-hull}, $d(y,c) \le \delta$.\qed
\end{proof}

\subsection{The $\alpha_i$-metric property for $1$-hyperbolic graphs}\label{sec:obstructions-alphai}

Note that, although the hyperbolicity of an $\alpha_i$-metric graph is upper-bounded by $i + \left\lceil\frac{i+1}{2}\right\rceil$, there are $n$-vertex 1-hyperbolic graphs that only satisfy the $\alpha_i$-metric property for some $i=\Omega(n)$. Consider, for example, a ladder with height $\ell$ (see Fig. \ref{fig:ladder-c4}). It is a 1-hyperbolic graph and it satisfies the $\alpha_i$-metric property only for $i\ge 2\ell$.  Other examples are obtained by subdiving once some edges of the ladder, see Fig.~\ref{fig:ladder-c5} and~\ref{fig:ladder-c6}.


  \begin{figure}[htb]
      \centering
      \begin{subfigure}{.4\textwidth}
          \begin{tikzpicture}
              \node[circle,fill=black,inner sep=0pt,minimum size=5pt, label=left:{$w$}] at (-3,.5) {};
              \node[circle,fill=black,inner sep=0pt,minimum size=5pt, label=left:{$v$}] at (-3,-.5) {};
              \node[circle,fill=black,inner sep=0pt,minimum size=5pt] at (-2,.5) {};
              \node[circle,fill=black,inner sep=0pt,minimum size=5pt] at (-2,-.5) {};
              \node[circle,fill=black,inner sep=0pt,minimum size=5pt] at (-1,.5) {};
              \node[circle,fill=black,inner sep=0pt,minimum size=5pt] at (-1,-.5) {};
              \node[circle,fill=black,inner sep=0pt,minimum size=5pt] at (0,.5) {};
              \node[circle,fill=black,inner sep=0pt,minimum size=5pt] at (0,-.5) {};
              \node at (1,.5) {$\ldots$};
              \node at (1,-.5) {$\ldots$};
              \node[circle,fill=black,inner sep=0pt,minimum size=5pt] at (2,.5) {};
              \node[circle,fill=black,inner sep=0pt,minimum size=5pt] at (2,-.5) {};
              \node[circle,fill=black,inner sep=0pt,minimum size=5pt, label=right:{$x$}] at (3,.5) {};
              \node[circle,fill=black,inner sep=0pt,minimum size=5pt, label=right:{$u$}] at (3,-.5) {};
              \draw (0,-.5) -- (-3,-.5) -- (-3,.5) -- (0,.5) -- (0,-.5); \draw (-2,.5) -- (-2,-.5); \draw (-1,.5) -- (-1,-.5);
              \draw (2,-.5) -- (2,.5) -- (3,.5) -- (3,-.5) -- (2,-.5);
          \end{tikzpicture}
          \caption{}
          \label{fig:ladder-c4}
      \end{subfigure}\hfill
      \begin{subfigure}{.4\textwidth}
          \begin{tikzpicture}
              \node[circle,fill=black,inner sep=0pt,minimum size=5pt, label=left:{$w$}] at (-3,.5) {};
              \node[circle,fill=black,inner sep=0pt,minimum size=5pt, label=left:{$v$}] at (-3,-.5) {};
              \node[circle,fill=black,inner sep=0pt,minimum size=5pt] at (-2.5,.5) {};
              \node[circle,fill=black,inner sep=0pt,minimum size=5pt] at (-2,.5) {};
              \node[circle,fill=black,inner sep=0pt,minimum size=5pt] at (-2,-.5) {};
              \node[circle,fill=black,inner sep=0pt,minimum size=5pt] at (-1.5,-.5) {};
              \node[circle,fill=black,inner sep=0pt,minimum size=5pt] at (-1,.5) {};
              \node[circle,fill=black,inner sep=0pt,minimum size=5pt] at (-1,-.5) {};
              \node[circle,fill=black,inner sep=0pt,minimum size=5pt] at (0,.5) {};
              \node[circle,fill=black,inner sep=0pt,minimum size=5pt] at (0,-.5) {};
              \node[circle,fill=black,inner sep=0pt,minimum size=5pt] at (-.5,.5) {};
              \node at (1,.5) {$\ldots$};
              \node at (1,-.5) {$\ldots$};
              \node[circle,fill=black,inner sep=0pt,minimum size=5pt] at (2,.5) {};
              \node[circle,fill=black,inner sep=0pt,minimum size=5pt] at (2,-.5) {};
              \node[circle,fill=black,inner sep=0pt,minimum size=5pt, label=right:{$x$}] at (3,.5) {};
              \node[circle,fill=black,inner sep=0pt,minimum size=5pt, label=right:{$u$}] at (3,-.5) {};
              \node[circle,fill=black,inner sep=0pt,minimum size=5pt] at (2.5,-.5) {};
              \draw (0,-.5) -- (-3,-.5) -- (-3,.5) -- (0,.5) -- (0,-.5); \draw (-2,.5) -- (-2,-.5); \draw (-1,.5) -- (-1,-.5);
              \draw (2,-.5) -- (2,.5) -- (3,.5) -- (3,-.5) -- (2,-.5);
          \end{tikzpicture}
          \caption{}
          \label{fig:ladder-c5}
      \end{subfigure}\vfill
      \begin{subfigure}{.4\textwidth}
          \begin{tikzpicture}
              \node[circle,fill=black,inner sep=0pt,minimum size=5pt, label=left:{$w$}] at (-3,.5) {};
              \node[circle,fill=black,inner sep=0pt,minimum size=5pt, label=left:{$v$}] at (-3,-.5) {};
              \node[circle,fill=black,inner sep=0pt,minimum size=5pt] at (-2.5,.5) {};
              \node[circle,fill=black,inner sep=0pt,minimum size=5pt] at (-2.5,-.5) {};
              \node[circle,fill=black,inner sep=0pt,minimum size=5pt] at (-2,.5) {};
              \node[circle,fill=black,inner sep=0pt,minimum size=5pt] at (-2,-.5) {};
              \node[circle,fill=black,inner sep=0pt,minimum size=5pt] at (-1.5,.5) {};
              \node[circle,fill=black,inner sep=0pt,minimum size=5pt] at (-1.5,-.5) {};
              \node[circle,fill=black,inner sep=0pt,minimum size=5pt] at (-1,.5) {};
              \node[circle,fill=black,inner sep=0pt,minimum size=5pt] at (-1,-.5) {};
              \node[circle,fill=black,inner sep=0pt,minimum size=5pt] at (-.5,.5) {};
              \node[circle,fill=black,inner sep=0pt,minimum size=5pt] at (-.5,-.5) {};
              \node[circle,fill=black,inner sep=0pt,minimum size=5pt] at (0,.5) {};
              \node[circle,fill=black,inner sep=0pt,minimum size=5pt] at (0,-.5) {};
              \node at (1,.5) {$\ldots$};
              \node at (1,-.5) {$\ldots$};
              \node[circle,fill=black,inner sep=0pt,minimum size=5pt] at (2,.5) {};
              \node[circle,fill=black,inner sep=0pt,minimum size=5pt] at (2,-.5) {};
              \node[circle,fill=black,inner sep=0pt,minimum size=5pt, label=right:{$x$}] at (3,.5) {};
              \node[circle,fill=black,inner sep=0pt,minimum size=5pt, label=right:{$u$}] at (3,-.5) {};
              \node[circle,fill=black,inner sep=0pt,minimum size=5pt] at (2.5,.5) {};
              \node[circle,fill=black,inner sep=0pt,minimum size=5pt] at (2.5,-.5) {};
              \draw (0,-.5) -- (-3,-.5) -- (-3,.5) -- (0,.5) -- (0,-.5); \draw (-2,.5) -- (-2,-.5); \draw (-1,.5) -- (-1,-.5);
              \draw (2,-.5) -- (2,.5) -- (3,.5) -- (3,-.5) -- (2,-.5);
          \end{tikzpicture}
          \caption{}
          \label{fig:ladder-c6}
      \end{subfigure}
      \caption{Examples of $1$-hyperbolic graphs that are not $\alpha_i$-metric for arbitrarily large values $i$. All these graphs are subdivisions of a ladder with height $\ell$ (Fig.~\ref{fig:ladder-c4}). The value chosen for $\ell$ can be arbitrary for the construction of Fig.~\ref{fig:ladder-c6}, but it must be an even number for that of Fig.~\ref{fig:ladder-c5}. As $v\in I(u,w), w\in I(v,x)$ and $1=d(u,x)=d(u,v)+d(v,x)-2\ell$, the graph of Fig.~\ref{fig:ladder-c4} is not $\alpha_{2\ell-1}$-metric. Similarly, the graph of Fig.~\ref{fig:ladder-c5} is not $\alpha_{3\ell-1}$-metric, and the graph of Fig.~\ref{fig:ladder-c6} is not $\alpha_{4\ell-1}$-metric.}
      \label{fig:ladders}
      \vspace*{-5mm}
  \end{figure}

 The multiple obstructions presented in Fig.~\ref{fig:ladders} hint at the difficulty of characterizing the $1$-hyperbolic $\alpha_i$-metric graphs.
 However, in some simpler settings, ladders turn out to be the only such obstructions.
 We prove it next.
 In what follows, a graph is called \emph{$k$-slim} if for every geodesic triangle $\Delta(u,v,w) = P(u,v) \cup P(v,w) \cup P(w,u)$, every vertex $x$ on one side $P(u,v)$ is at distance at most $k$ to some vertex $x' \in P(v,w) \cup P(w,u)$ on one of the other two sides. In particular, chordal graphs are $1$-slim~\cite{slimness}, and every $k$-slim graph is $\left(2k+\frac 1 2\right)$-hyperbolic~\cite{Sotothesis}.

\begin{theorem}\label{thm:1-slim}
    If $G=(V,E)$ is $1$-slim then, for every integer $i \ge 4$, either $G$ is $\alpha_i$-metric or it contains an isometric ladder of height $\left\lceil \frac{i-1} 2 \right\rceil - 1$.
\end{theorem}

\begin{proof}
Assuming $G$ is not $\alpha_i$-metric, there are vertices $u,v,x,y$ that satisfy the following properties: $u \in I(x,v)$, $v \in I(y,u)$, $u \sim v$, and $d(x,y) \le d(x,u)+d(v,y)-i$. In what follows, we construct an isometric ladder of height $\left\lceil \frac{i-1} 2 \right\rceil - 1$ with all its vertices in $I(x,u) \cup I(y,v)$. The reader can follow our construction using Fig.~\ref{fig:isometric-ladder}.

Let $x' \in I(x,u) \cap I(x,y)$ be maximizing $d(x,x') = k$. Then, $d(x',y) = d(x,y) - k \le d(x,u)-k+d(v,y)-i = d(x',u)+d(v,y) - i$. Therefore, up to replacing $x$ with $x'$, we may assume from now on $I^o(x,u) \cap I^o(x,y) = \emptyset$. In the same way, up to replacing $y$ with some $y' \in I(y,v) \cap I(y,x)$ such that $d(y,y')$ is maximized, we may assume in what follows that $I^o(y,v) \cap I^o(y,x) = \emptyset$.

We fix some shortest paths $P(x,y)$, $P(x,u)$, and $P(y,v)$.
Since we ensured above that $I^o(x,u) \cap I^o(x,y) = \emptyset$, $P(x,y)$ and $P(x,u)$ are internally vertex-disjoint.
In the same way, $P(x,y)$ and $P(y,v)$ are internally vertex-disjoint.
Finally, $P(x,u)$ and $P(y,v)$ are vertex-disjoint since  $d(x,u) < d(x,v)$ and $d(y,v) < d(y,u)$.
Since $d(x,y) \ge d(y,v) - d(x,v) = d(x,u) + d(y,v) - (2d(x,u)+1)$, necessarily, $i \le 2d(x,u)+1$. In the same way, $i \le 2d(y,v)+1$.
We set $\ell = \left\lceil \frac{i-1} 2 \right\rceil - 1$.
Note that $\ell \le \min\{d(x,u),d(y,v)\}$, and that $\ell \ge 1$ because $i \ge 4$.

Then, let $x_\ell,x_{\ell-1},\ldots,x_0 = u$ and $y_\ell,y_{\ell-1},\ldots,y_0=v$ be respectively the terminal sub-paths of $P(x,u)$ and $P(y,v)$.
We prove, in what follows, that $\{x_0,x_1,\ldots,x_\ell\} \cup \{y_0,y_1,\ldots,y_\ell\}$ are the vertices of an isometric ladder of $G$.

\begin{figure}[!ht]
    \centering
    \usetikzlibrary{decorations.pathmorphing, patterns,shapes}
    \begin{tikzpicture}

            \node[circle,fill=black,inner sep=0pt,minimum size=5pt, label=above:{$x$}] at (-1,5) {};
            \node[circle,fill=black,inner sep=0pt,minimum size=5pt, label=above:{$y$}] at (1,5) {};
            \node[circle,fill=black,inner sep=0pt,minimum size=5pt, label=left:{$x_\ell$}] at (-1,3) {};
            \node[circle,fill=black,inner sep=0pt,minimum size=5pt, label=right:{$y_\ell$}] at (1,3) {};
            \node[circle,fill=black,inner sep=0pt,minimum size=5pt, label=left:{$x_{\ell-1}$}] at (-1,2.5) {};
            \node[circle,fill=black,inner sep=0pt,minimum size=5pt, label=right:{$y_{\ell-1}$}] at (1,2.5) {};
            \node[circle,fill=black,inner sep=0pt,minimum size=5pt, label=left:{$x_3$}] at (-1,1.5) {};
            \node[circle,fill=black,inner sep=0pt,minimum size=5pt, label=right:{$y_3$}] at (1,1.5) {};
            \node[circle,fill=black,inner sep=0pt,minimum size=5pt, label=left:{$x_2$}] at (-1,1) {};
            \node[circle,fill=black,inner sep=0pt,minimum size=5pt, label=right:{$y_2$}] at (1,1) {};
            \node[circle,fill=black,inner sep=0pt,minimum size=5pt, label=left:{$x_1$}] at (-1,.5) {};
            \node[circle,fill=black,inner sep=0pt,minimum size=5pt, label=right:{$y_1$}] at (1,.5) {};
            \node[circle,fill=black,inner sep=0pt,minimum size=5pt, label=left:{$u$}] at (-1,0) {};
            \node[circle,fill=black,inner sep=0pt,minimum size=5pt, label=right:{$v$}] at (1,0) {};

            \draw[decorate,decoration=zigzag] (-1,5) -- (1,5);
            \draw[decorate,decoration=zigzag] (-1,5) -- (-1,3);
            \draw[decorate,decoration=zigzag] (1,5) -- (1,3);
            \draw (-1,3) -- (-1,2.5) -- (1,2.5) -- (1,3) -- (-1,3);
            \draw[dashed] (-1,2.5) -- (-1,1.5);
            \draw[dashed] (1,2.5) -- (1,1.5);
            \draw (-1,1.5) -- (-1,0) -- (1,0) -- (1,1.5) -- (-1,1.5);
            \draw (-1,.5) -- (1,.5);
            \draw (-1,1) -- (1,1);

    \end{tikzpicture}
    \caption{Construction of an isometric ladder.}
    \label{fig:isometric-ladder}
    \vspace*{-5mm}
\end{figure}

For that, we first prove that $x_j \sim y_j$ for every $0 \le j \le \ell$.
This is done by finite induction on $j$.
For $j=0$ this is true because $x_0 = u$ and $y_0 = v$.
Then, let $j$ be such that $1 \le j \le \ell$, and $x_{j-1} \sim y_{j-1}$.
We consider the shortest sub-paths $P(x,x_{j-1}),P(y,y_{j-1})$ of $P(x,u),P(y,v)$.
Observe that $P'(x,y_{j-1}) = P(x,x_{j-1})+y_{j-1}$ is a shortest $(x,y_{j-1})$-path.
In particular, $P(x,y), \ P'(x,y_{j-1}), P(y,y_{j-1})$ are the sides of a geodesic triangle formed by vertices $x,y,y_{j-1}$.
Since $G$ is $1$-slim, there exists a $z_j \in P(x,y) \cup P(y,y_{j-1})$ such that $x_j \sim z_j$.
Assume  $z_j \in P(x,y)$.
Since $I^o(x,u) \cap I^o(x,y) = \emptyset$, $z_j \notin I(x,u)$.
Therefore, $d(x,z_j) \ge d(x,x_j) = d(x,x_{j-1})-1$.
Recall that $d(x,y) \le d(x,u)+d(v,y)-i = d(x,x_{j-1})+d(y_{j-1},y) + 2(j-1)-i$.
As a result, $d(z_j,y) = d(x,y)-d(x,z_j) \le d(y_{j-1},y)+2j-i-1$.
Since, furthermore, $I^o(x,y) \cap I^o(y,v) = \emptyset$, $z_j \notin I(y,y_{j-1})$.
This implies $d(y,y_{j-1}) \le d(y,z_j)+d(z_j,y_{j-1})-1 \le d(y,z_j)+2 \le d(y_{j-1},y)+2j-i+1$.
However, $2j-i+1 \le 2\ell-i+1 < 0$, which is impossible.
As a result, $z_j \in P(y,y_{j-1})$.
Since $d(x,u) < d(x,v)$, necessarily, $d(z_j,v) \ge d(x_j,u)$.
Furthermore, since $d(y,v) < d(y,u)$, necessarily, $d(z_j,v) \le d(x_j,u)$.
Therefore, the only possibility is $z_j = y_j$.

For every $j$ such that $0 \le j \le \ell$, we actually proved above that $y_j$ is the only possible neighbour of $x_j$ within $P(y,v)$.
As a result, the subgraph induced by $\{x_0,x_1,\ldots,x_\ell\} \cup \{y_0,y_1,\ldots,y_\ell\}$ is a ladder of height $\ell$.
Finally, observe that for every $j,j'$ such that $0 \le j,j' \le \ell$, $d(x_j,y_{j'}) > |j-j'|$ since $d(x_j,u) < d(x_j,v)$, whereas $d(y_{j'},v) < d(y_{j'},u)$.
It implies that the ladder induced by $\{x_0,x_1,\ldots,x_\ell\} \cup \{y_0,y_1,\ldots,y_\ell\}$ is an isometric subgraph of $G$.\qed
\end{proof}

The following observation can be used for proving similar coarse characterizations of the $\alpha_i$-metric property in some classes of $1$-hyperbolic graphs.

\begin{lemma}\label{lem:1hyp}
    Let $u,v,x,y$ be vertices in a graph $G$ such that $u \sim v$, $u \in I(x,v)$, $v \in I(y,u)$, and $d(x,y) \le d(x,u)+d(v,y)-3$.
    If $G$ is $1$-hyperbolic then $S_1(u,x) \subseteq S_1(u,y)$.
\end{lemma}
\begin{proof}
    Let $u' \in S_1(u,x)$ be arbitrary.
    To prove that $u' \in S_1(u,y)$, it suffices to prove that $d(y,u') = d(y,v)$.
    For that, we first note that $u',v$ are nonadjacent, which is because $d(x,u) < d(x,v)$. Therefore, $d(u',v) = 2$.
    Then, let us consider the three distance sums: $(A)$ $d(u',v)+d(x,y) = 2+d(x,y)$, $(B)$ $d(u',x)+d(v,y) = d(x,u)+d(v,y)-1$, and $(C)$ $d(u',y)+d(v,x)$.
    In particular, $(A) \le d(x,u)+d(v,y)-1 = (B)$, and $(C) = d(u',y)+d(x,u)+1 \ge d(y,v)+d(u,x)+1 = (B)+2$.
    Therefore, $(C) > (B) \ge (A)$.
    Since $G$ is $1$-hyperbolic, $(C)-(B) \le 2$.
    As a result, $(C) = (B)+2$, which implies $d(y,u') = d(y,v)$. \qed
\end{proof}

A graph satisfies the \emph{quadrangle condition} $(QC)$ if for every vertices $u,v$ such that $d(u,v) \ge 2$, every two vertices $x,y \in S_1(u,v)$, $x\nsim y$, have a common neighbour in $S_2(u,v)$.
The quadrangle condition is satisfied by all weakly modular graphs, which include the most studied classes in Metric Graph Theory, such as: median graphs (i.e., 1-skeletons of CAT(0) cube complexes), Helly graphs, and bridged graphs~\cite{WeaklyModular}.

\begin{theorem}\label{thm:1hyp-qc}
    If $G=(V,E)$ is $1$-hyperbolic and satisfies the quadrangle condition then, for every integer $i \ge 3$, either $G$ is $\alpha_i$-metric or it contains an isometric ladder of height $\left\lfloor \frac{i-1} 2 \right\rfloor$.   
\end{theorem}
\begin{proof}
    The proof if similar to that of Theorem~\ref{thm:1-slim}.
    Assuming $G$ is not $\alpha_i$-metric, there exist vertices $u,v,x,y$ that satisfy the following conditions: $u \sim v$, $u \in I(x,v)$, $v \in I(y,u)$, and $d(x,y) \le d(x,u)+d(v,y)-i$.
    Set $\ell = \left\lfloor \frac{i-1} 2 \right\rfloor$.
    We fix some shortest $(x,u)$-path, denoting by $x_\ell,x_{\ell-1},\ldots,x_1,x_0=u$ its last $\ell+1$ vertices.
    Then, using $(QC)$, we prove, in what follows, the existence of vertices $y_\ell,y_{\ell-1},\ldots,y_1,y_0$ such that $y_0 = v$, and that the following properties hold for every $1 \le j \le \ell$:
    \begin{itemize}
        \item $x_j \sim y_j$;
        \item and $y_j \in I(y,y_{j-1})$.
    \end{itemize}
    For that, we proceed by finite induction on $j$.
    Let $j$ be such that $1 \le j \le \ell$, and there exists a vertex $y_{j-1}$ such that: $x_{j-1} \sim y_{j-1}$, $y_{j-1} \in I(y,v)$, and $d(y_{j-1},v) = j-1$.
    (Such a vertex $y_{j-1}$ exists if $j=1$, because $y_0 = v$, and otherwise its existence follows from the induction hypothesis).
    Since $d(x,y) \le d(x,u)+d(v,y) - i$, $d(x,y) \le d(x,x_{j-1})+d(y_{j-1},y) + 2(j-1)-i \le d(x,x_{j-1})+d(y_{j-1},y) + 2(\ell-1)-i \le d(x,x_{j-1})+d(y_{j-1},y) -3$.
    Furthermore, $x_{j-1} \in I(x,y_{j-1})$, and $y_{j-1} \in I(y,x_{j-1})$.
    By Lemma~\ref{lem:1hyp}, $x_j \in S_1(x_{j-1},y)$, or equivalently $d(y,x_j) = d(y,y_{j-1})$.
    Therefore, by applying $(QC)$ to $x_j,y_{j-1} \in S_1(x_{j-1},y)$, we get the existence of some common neighbour $y_j \in S_2(x_{j-1},y)$ of $x_j,y_{j-1}$.
    In particular, $x_j \sim y_j$, and $y_j \in I(y,y_{j-1})$.

    By using the same arguments as for the proof of Theorem~\ref{thm:1-slim}, we conclude that $\{x_\ell,x_{\ell-1},\ldots,x_1,x_0\} \cup \{y_\ell,y_{\ell-1},\ldots,y_1,y_0\}$ induces an isometric ladder of $G$ of height $\ell$.
\end{proof}

\section{Graphs with $\alpha_1$-metric}\label{sec:i-1}

By Theorem~\ref{thm:main}, every $\alpha_0$-metric graph must be $1$-hyperbolic, and similarly every $\alpha_1$-metric graph must be $2$-hyperbolic.
In fact, by~\cite[Corollary 30]{WuZh01}, the hyperbolicity of an $\alpha_0$-metric graph is at most $\frac 1 2$.
In this section, we improve the bound on the hyperbolicity of $\alpha_1$-metric graphs from $2$ to $1$ (Theorem~\ref{thm:alpha-1}).
The bound is sharp even for the subclass of chordal graphs~\cite{BKM01}.

    \subsection{Intermediate Results}\label{sec:prelim-i-1}

    First, we recall an elegant characterization of $\alpha_1$-metric graphs.

\begin{theorem} [see Corollary 1 in~\cite{YuCh1991}]\label{th:charact}
$G$ is an  $\alpha_1$-metric graph if and only if all disks $D(v,k)$ $(v\in V$, $k\geq 1)$ of $G$ are convex and $G$ does not contain the graph \WW\ from Fig. \ref{fig:forbid} as an isometric subgraph.
\end{theorem}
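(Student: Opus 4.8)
The plan is to prove the two implications of this characterization separately, the forward (necessity) direction being the routine one and the converse (sufficiency) being the crux.

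For necessity, suppose $G$ is $\alpha_1$-metric. To see that every disk $D(v,k)$ is convex, take any shortest path $P=(p_0,p_1,\dots,p_m)$ with $p_0,p_m\in D(v,k)$, and let $j$ be the smallest index maximizing $d(v,p_j)$. If this maximum exceeded $k$, then $0<j<m$ (neither endpoint attains it), and by minimality of $j$, together with the fact that consecutive vertices of $P$ are at distance one, $d(v,p_{j-1})=d(v,p_j)-1$. Hence $p_{j-1}\in I(v,p_j)$ and $p_j\in I(p_{j-1},p_m)$, with $p_{j-1}\sim p_j$; applying the $\alpha_1$-metric property to the quadruple $v,p_{j-1},p_j,p_m$ yields $d(v,p_m)\ge d(v,p_{j-1})+d(p_{j-1},p_m)-1=d(v,p_j)+(m-j)-1>k$, contradicting $p_m\in D(v,k)$. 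Thus all disks of $G$ are convex. For the forbidden subgraph, observe that the $\alpha_1$-metric property is inherited by isometric subgraphs: an isometric subgraph preserves distances, hence also intervals and adjacencies, so any quadruple violating the property in the subgraph violates it in $G$ as well. Since a direct inspection of $W_6^{++}$ in Fig.~\ref{fig:forbid} exhibits four vertices $u,v,w,x$ with $v\in I(u,w)$, $w\in I(v,x)$, $v\sim w$ and $d(u,x)<d(u,v)+d(v,x)-1$, the graph $W_6^{++}$ cannot occur as an isometric subgraph of $G$.

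For sufficiency, assume all disks of $G$ are convex and $G$ has no isometric $W_6^{++}$, and suppose for contradiction that $G$ is not $\alpha_1$-metric. Pick a quadruple $u,v,w,x$ violating the property — so $v\in I(u,w)$, $w\in I(v,x)$, $v\sim w$ and $d(u,x)\le d(u,v)+d(v,x)-2$ — that minimizes $d(u,v)+d(w,x)$, and among those minimizes $d(u,x)$. A first use of disk convexity disposes of the extreme cases: if $d(u,v)=1$ then $d(u,x)\le d(w,x)$, so $u$ and $w$ both lie in $D(x,d(w,x))$ while the internal vertex $v$ of the $(u,w)$-geodesic escapes this disk, contradicting convexity; symmetrically $d(w,x)=1$ forces $w$ out of the convex disk $D(u,d(u,v))$ whose points $v$ and $x$ would straddle it. Hence $d(u,v),d(w,x)\ge 2$. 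Next, replacing $u$ by its neighbour $u'$ on a $(u,v)$-geodesic, and symmetrically $x$ by its neighbour $x'$ on a $(w,x)$-geodesic, produces quadruples $u',v,w,x$ and $u,v,w,x'$ that still satisfy the hypotheses but have strictly smaller parameter $d(\cdot,v)+d(w,\cdot)$; by minimality they are not counterexamples, and comparing the inequalities they satisfy with the violated inequality for $u,v,w,x$ pins down the metric structure around $u,v,w,x$ to a small number of rigid configurations. In each such configuration, a further application of disk convexity produces a pair of ``detour'' vertices off the geodesics, and one checks that a bounded set of vertices around $u,v,w,x$ together with these detour vertices induces an isometric copy of $W_6^{++}$ in $G$ — the desired contradiction.

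The main obstacle is this last step: the case analysis on the rigid configurations and, in each case, the verification that the extracted vertices both induce a subgraph isomorphic to $W_6^{++}$ and do so isometrically (i.e.\ that $G$ admits no shortcut between them). This bookkeeping is precisely the substance of the argument in~\cite{YuCh1991}; alternatively one may simply invoke that reference.
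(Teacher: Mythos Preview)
The paper does not prove Theorem~\ref{th:charact}; it is quoted verbatim from~\cite{YuCh1991} as a known characterization and used as a black box throughout Section~\ref{sec:i-1}. There is therefore no proof in the paper to compare your proposal against.

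That said, your sketch is sound as far as it goes. The necessity direction is complete and correct: your disk-convexity argument via the $\alpha_1$-property applied to $v,p_{j-1},p_j,p_m$ is clean, and the claim that $W_6^{++}$ fails the $\alpha_1$-property is true (concretely, using the labeling from the paper's Lemma~\ref{lem:alpha-1-thinness}, the quadruple $u',a_u,x,c_v$ with edge $a_u x$ witnesses $d(u',c_v)=3<4=d(u',a_u)+d(a_u,c_v)-1$). For sufficiency, your minimal-counterexample reduction and the base cases $d(u,v)=1$, $d(w,x)=1$ are handled correctly via disk convexity. You are candid that the remaining extraction of an isometric $W_6^{++}$ is the substantive work and defer it to~\cite{YuCh1991}; since the theorem \emph{is} the main result of that reference, this is appropriate, but note that your write-up is then a sketch rather than a self-contained proof.
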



  \begin{figure}[htb]
      \centering
      \begin{tikzpicture}
          \node[circle,fill=black,inner sep=0pt,minimum size=5pt] at (-3.5,0) {};
          \node[circle,fill=black,inner sep=0pt,minimum size=5pt] at (-3,.5) {};
          \node[circle,fill=black,inner sep=0pt,minimum size=5pt] at (-2.5,0) {};
          \node[circle,fill=black,inner sep=0pt,minimum size=5pt] at (-2.5,1) {};
          \node[circle,fill=black,inner sep=0pt,minimum size=5pt] at (-2,.5) {};
          \node[circle,fill=black,inner sep=0pt,minimum size=5pt] at (-1.5,0) {};
          \node[circle,fill=black,inner sep=0pt,minimum size=5pt] at (-1.5,1) {};
          \node[circle,fill=black,inner sep=0pt,minimum size=5pt] at (-1,.5) {};
          \node[circle,fill=black,inner sep=0pt,minimum size=5pt] at (-.5,0) {};
          \draw (-3.5,0) -- (-2.5,1) -- (-1.5,1) -- (-.5,0) -- (-3.5,0);
          \draw (-2.5,1) -- (-2,.5) -- (-3,.5) -- (-2.5,0) -- (-2,.5) -- (-1.5,0) -- (-1,.5) -- (-2,.5) -- (-1.5,1);
      \end{tikzpicture}
      \caption{Forbidden isometric subgraph \WW.}
      \label{fig:forbid}
      \vspace*{-5mm}
  \end{figure}

\begin{lemma}[see Theorem 2 in~\cite{SoCh1983}]\label{lem:convex}
All disks $D(v,k)$ $(v\in V$, $k\geq 1)$ of a graph $G$ are convex if and only if $G$ does not contain isometric cycles of length $\ell > 5$, and for any two vertices $x,y$ of $G$ the slice $S_1(x,y)$ is a clique.  
\end{lemma}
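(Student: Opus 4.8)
The plan is to establish the equivalence by proving each implication separately; the forward one is easy and the reverse one carries all the weight. For the forward direction I argue by contraposition. If $C=v_0v_1\cdots v_{\ell-1}v_0$ is an isometric cycle with $\ell\ge 6$, set $h=\lfloor\ell/2\rfloor$: when $\ell=2h$, the vertices $v_{h-1}$ and $v_{h+1}$ both lie in $D(v_0,h-1)$ and, by isometry of $C$, the cycle-arc joining them is a geodesic of $G$ passing through $v_h$, where $d(v_0,v_h)=h>h-1$; when $\ell=2h+1$, do the same with $v_{h-1},v_{h+2}$ and the arc through $v_h$. In both cases $D(v_0,h-1)$ is not convex. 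Likewise, if some first slice $S_1(x,y)$ contains two non-adjacent vertices $p,q$, then $d(p,q)=2$ with $x\in I(p,q)$, while $p,q\in D(y,d(x,y)-1)$ and $d(y,x)=d(x,y)>d(x,y)-1$, so that disk is not convex either.

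For the reverse direction, assume $G$ has no isometric cycle of length exceeding $5$ and that every first slice is a clique, yet some disk of $G$ is not convex. Then there is a quadruple $(v,p,q,w)$ with $w\in I(p,q)$ and $d(v,w)>k:=\max\{d(v,p),d(v,q)\}$; choose one minimizing $d(p,q)$, and among these minimizing $d(v,p)+d(v,q)$, and fix a geodesic $P$ of $G$ from $p$ to $q$ through $w$. A peak-finding step along $P$ — replace $p$ by the last vertex of the subpath $P[p,w]$ that is still within distance $k$ of $v$, and $q$ symmetrically — yields a counterexample whose endpoints are at distance at most $d(p,q)$ and both at distance exactly $k$ from $v$; minimality of $d(p,q)$ forces these endpoints to be $p$ and $q$ themselves, so $d(v,p)=d(v,q)=k$ and every internal vertex of $P$ is at distance at least $k+1$ from $v$. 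A second use of minimality, applied to a subpath of $P$ straddling a jump from $k+1$ to $k+2$, forces every internal vertex of $P$ to be at distance exactly $k+1$ from $v$; in particular $w$ may be taken to be the neighbour of $p$ on $P$, and the neighbour $q'$ of $q$ on $P$ also satisfies $d(v,q')=k+1$.

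Write $m=d(p,q)$. If $m=2$, then $p\sim w\sim q$, $p\not\sim q$, and $d(v,w)=d(v,p)+d(p,w)=d(v,q)+d(q,w)=k+1$, so both $p$ and $q$ lie in $S_1(w,v)$ although $p\not\sim q$ — contradicting the slice hypothesis. Hence $m\ge 3$. Fix geodesics $A$ (from $v$ to $p$) and $B$ (from $v$ to $q$) sharing a longest possible common prefix, let $r$ be their last common vertex, and put $c=d(v,r)<k$. Since the internal vertices of $P$ are at distance $k+1$ from $v$ they avoid $A\cup B$, and moreover $p\notin B$ and $q\notin A$, so $C:=A[r,p]\cup P\cup B[q,r]$ is a genuine cycle of length $2(k-c)+m$; as $m=d(p,q)\le d(p,r)+d(r,q)=2(k-c)$, we get $|C|\ge 2m\ge 6$. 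If $C$ is isometric in $G$ this already contradicts the hypothesis. Otherwise choose $x,y\in V(C)$ with $d_G(x,y)<d_C(x,y)$ and $d_C(x,y)$ minimum; a geodesic of $G$ between $x$ and $y$ then meets $C$ only in $x$ and $y$, and one uses it either to re-route $A$ or $B$ into a $v$-geodesic sharing a strictly longer prefix with the other (impossible by the choice of $A,B$), or to produce a strictly smaller counterexample in the chosen ordering, the few residual configurations being dispatched — as in the case $m=2$ — by the first-slice-clique hypothesis. This contradiction proves all disks of $G$ are convex.

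The hard part is the very last step: showing the cycle $C$ may be taken isometric. Every chord or shortcut of $C$ has to be shown to either shortcut one of the three geodesic sides of $C$ (impossible, geodesics are induced), or to yield a $v$-geodesic with a longer common prefix with its partner (contradicting maximality), or — when the shortcut sits entirely on the \emph{plateau} of vertices at distance $k+1$ from $v$ — to create a smaller quadruple in the well-ordering; and one must check the distance labels survive each surgery. That case analysis, together with the bounded-size configurations it leaves, is where the slice hypothesis does its real work and where essentially all the effort goes; everything else is routine.
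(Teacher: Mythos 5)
The paper does not prove this statement at all: it is imported verbatim from Soltan and Chepoi \cite{SoCh1983}, so your argument has to stand entirely on its own. Your forward direction is correct and complete: on an isometric cycle of length $\ell\ge 6$ the two vertices you pick do lie in $D(v_0,h-1)$ and the short arc through $v_h$ is a geodesic leaving that disk, and a non-clique slice $S_1(x,y)$ immediately breaks convexity of $D(y,d(x,y)-1)$. The skeleton of the reverse direction is also sound as far as it goes: the peak-finding reduction to a minimal quadruple with $d(v,p)=d(v,q)=k$ and all interior vertices of $P$ on the plateau $d(v,\cdot)=k+1$, the $m=2$ case via the slice hypothesis, and the construction of the cycle $C=A[r,p]\cup P\cup B[q,r]$ of length $2(k-c)+m\ge 6$ (where maximality of the common prefix is what guarantees $A$ and $B$ do not meet again, a point worth stating).

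The genuine gap is exactly the step you yourself defer: showing that every shortcut of $C$ leads to a contradiction. This is the heart of the lemma and it is not done; a plan plus the admission that ``this is where essentially all the effort goes'' is not a proof. Moreover, the trichotomy you propose is not exhaustive: a shortcut joining an interior vertex of $A[r,p]$ (or of $B[r,q]$) to an interior vertex of $P$ is not a shortcut of a single geodesic side, does not lie on the plateau, and does not re-route anything; even for an $A$--$B$ shortcut, splicing it in yields a $v$-geodesic with a longer common prefix only when the shortcut has exactly the right length ($d_G(a,b)=|d(v,a)-d(v,b)|$), which is not automatic. These mixed chords are precisely where the hypotheses (no long isometric cycles plus $S_1(x,y)$ a clique) must be combined, typically with an induction on the radius $k$ or a finer extremal ordering than the one you fixed, and waving at ``residual configurations dispatched as in the case $m=2$'' leaves the lemma unproved. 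A smaller technical slip: choosing the shortcut pair $x,y$ with $d_C(x,y)$ minimum does not force a $G$-geodesic between them to meet $C$ only in its endpoints (an interior intersection point $z$ may have $d_C(x,z)>d_C(x,y)$); the standard fix is to minimize $d_G(x,y)$ instead.
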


The next lemma characterizes the situation in which, in an $\alpha_1$-metric graph, the union of two shortest paths with one common terminal edge is not a shortest path.  

\begin{lemma} [see Lemma 5 in~\cite{BaCh2003}]  \label{lm:1-hyp}
Let $G$ be an $\alpha_1$-metric graph. Let $x,y,v,u$ be vertices of $G$ such that
$v \in I(x,y)$, $x\in I(v,u)$, and $x$ and $v$ are adjacent. Then $d(u,y)=d(u,x) + d(v,y)$ holds if and only if there exist a neighbour $x'$ of $x$ in $I(x,u)$ and a neighbour $v'$ of $v$ in  $I(v,y)$ with $d(x',v')=2$; in particular, $x'$ and $v'$ lie on a common shortest path of $G$ between $u$ and $y$.
\end{lemma}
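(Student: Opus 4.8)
The plan is to prove the two implications separately; the ($\Leftarrow$) direction is short, and essentially all the work lies in ($\Rightarrow$).

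For ($\Leftarrow$): given $x'\in N(x)\cap I(x,u)$ and $v'\in N(v)\cap I(v,y)$ with $d(x',v')=2$, I would concatenate a shortest $(u,x')$-path, a shortest $(x',v')$-path and a shortest $(v',y)$-path to get $d(u,y)\le d(u,x')+d(x',v')+d(v',y)=(d(u,x)-1)+2+(d(v,y)-1)=d(u,x)+d(v,y)$. For the reverse inequality I would apply the $\alpha_1$-metric property to the quadruple $u,x,v,y$ — legitimate since $x\in I(u,v)$, $v\in I(x,y)$ and $x\sim v$ — obtaining $d(u,y)\ge d(u,x)+d(x,y)-1=d(u,x)+d(v,y)$. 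Hence $d(u,y)=d(u,x)+d(v,y)$, and since the triangle inequality above is then tight, the concatenated path is a shortest $(u,y)$-path, which also yields the ``in particular'' clause.

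For ($\Rightarrow$): write $a=d(u,x)$, $b=d(v,y)$; from the hypotheses $d(x,y)=b+1$, $d(u,v)=a+1$, and (since $d(u,y)$ equals $a+b$ and not $a+b+1$) $a,b\ge 1$. Call a pair $(x',v')$ \emph{admissible} if $x'\in N(x)$ with $d(u,x')=a-1$ and $v'\in N(v)$ with $d(v',y)=b-1$; admissible pairs exist because $a,b\ge 1$. For any admissible pair, $a+b=d(u,y)\le d(u,x')+d(x',v')+d(v',y)=(a-1)+d(x',v')+(b-1)$ forces $d(x',v')\ge 2$ (hence $x'\ne v'$ and $x'\not\sim v'$), while $d(x',v')\le d(x',x)+d(x,v)+d(v,v')=3$. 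Since two vertices at distance $2$ always have a common neighbour, it suffices to exhibit \emph{one} admissible pair with a common neighbour; then $d(x',v')=2$ automatically and, as above, $x'$ and $v'$ share a shortest $(u,y)$-path. If some admissible $v'$ is adjacent to $x$ (resp.\ some admissible $x'$ is adjacent to $v$), then $x$ (resp.\ $v$) is such a common neighbour; so I may assume $d(x,v')=2$ for every admissible $v'$ and $d(v,x')=2$ for every admissible $x'$.

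In this remaining case I would bring $x$ and $v$ near the middle slice of a shortest $(u,y)$-path. The disks $D(u,a)$, $D(y,b)$, $D(x,1)$ pairwise intersect (the first two meet exactly in the nonempty slice $S_a(u,y)$; $x\in D(u,a)$; and a neighbour of $x$ on a shortest $(x,y)$-path lies in $D(y,b)\cap D(x,1)$), so Lemma~\ref{lem:three-balls} gives $x^*\in S_a(u,y)$ with $d(x,x^*)\le 2$; symmetrically, using $D(u,a),D(y,b),D(v,1)$, there is $v^*\in S_a(u,y)$ with $d(v,v^*)\le 2$. Since $x\notin I(u,y)$ and $v\notin I(u,y)$ we get $x\ne x^*$, $v\ne v^*$, hence $d(x,x^*),d(v,v^*)\in\{1,2\}$. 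The remaining argument is a finite case analysis on these two distances and on which of $S_{a-1}(u,y),S_a(u,y)$ the relevant common neighbours lie in, producing in each case an admissible pair with a common neighbour, and using convexity of all disks together with $\kappa(G)\le 2$ (Theorem~\ref{th:charact}, Lemma~\ref{lem:convex}, Lemma~\ref{lem:thinness}) to discard the configurations in which no such pair exists, by extracting an isometric \WW. The step I expect to be the crux is exactly this: showing it is impossible for \emph{every} admissible pair to have $d(x',v')=3$. Everything else — the $\alpha_1$ lower bound, the reduction to ``find a common neighbour'', the two adjacency cases, and the three-ball contraction onto $S_a(u,y)$ — is routine, and the real content of the lemma is that convexity of disks plus the $W_6^{++}$-exclusion is precisely what prevents a ``defect-one'' quadruple $u,x,v,y$ from being unrepairable by a local shortcut.
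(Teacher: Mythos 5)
First, a point of comparison: the paper does not prove Lemma~\ref{lm:1-hyp} at all --- it is imported verbatim from~\cite{BaCh2003} --- so there is no in-paper argument to measure your proposal against; it has to stand on its own. The ($\Leftarrow$) direction is correct as you present it: the $\alpha_1$ lower bound $d(u,y)\ge d(u,x)+d(v,y)$ applied to the quadruple $u,x,v,y$, combined with the upper bound through $x',v'$, gives equality and also the ``in particular'' clause. Your framing of ($\Rightarrow$) is also sound as far as it goes: $a,b\ge 1$, admissible pairs exist, $2\le d(x',v')\le 3$ for every admissible pair, it suffices to exhibit one admissible pair with a common neighbour, and the sub-cases where some admissible $v'$ is adjacent to $x$ (or some admissible $x'$ to $v$) are correctly disposed of. The construction of $x^*,v^*\in S_a(u,y)$ via Lemma~\ref{lem:three-balls} is also legitimate (with $i=1$, $r_w=1$, the three disks do pairwise intersect, and $x\notin I(u,y)$, $v\notin I(u,y)$ as you say).

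The genuine gap is that the forward direction is never actually proved. Everything after the three-ball contraction is an announcement, not an argument: you state that ``a finite case analysis on these two distances and on which of $S_{a-1}(u,y),S_a(u,y)$ the relevant common neighbours lie in'' will in each configuration either produce an admissible pair with a common neighbour or extract an isometric \WW\ (or violate convexity of disks), but no case is carried out. That step is precisely the mathematical content of the lemma --- you acknowledge this yourself when you call ``showing it is impossible for \emph{every} admissible pair to have $d(x',v')=3$'' the crux --- and it is not routine: one must build a candidate forbidden configuration vertex by vertex, verify all required adjacencies and non-adjacencies, and check that the subgraph obtained is \emph{isometric} (not merely induced) before Theorem~\ref{th:charact} can be invoked, in the same painstaking style as the proofs of Lemma~\ref{lem:alpha-1-thinness} and Lemma~\ref{lem:close-balls} in this paper. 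It is also not evident a priori that the case split you name is exhaustive, nor that each bad branch really terminates in \WW\ rather than in some configuration compatible with the $\alpha_1$ hypotheses. Until that analysis is written out and verified, only the easy implication of the equivalence is established, so the proposal is incomplete where it matters most.
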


We will also need in our proofs the following properties of metric triangles in $\alpha_1$-metric graphs.

\begin{lemma}[see Lemma 4 in~\cite{BaCh2003}]\label{lem:triangle}
In an $\alpha_1$-metric graph $G$, every metric triangle is of type $(1,1,1)$, $(1,2,2)$, $(2,1,2)$, $(2,2,1)$ or $(2,2,2)$.
\end{lemma}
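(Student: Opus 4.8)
The plan is to leverage the characterization of $\alpha_1$-metric graphs from Theorem~\ref{th:charact}, in particular the fact that all disks are convex, which by Lemma~\ref{lem:convex} forbids isometric cycles of length $\ell > 5$ and forces every slice $S_1(x,y)$ to be a clique. Let $uvw$ be a metric triangle with $d(u,v) \le d(v,w) \le d(w,u)$; so the open intervals $I^o(u,v), I^o(v,w), I^o(w,u)$ are pairwise disjoint. First I would observe that the maximum side-length $d(w,u)$ can be at most $2$: if $d(w,u) \ge 3$, take a neighbour $x'$ of $u$ on a shortest $(u,w)$-path and a neighbour $v'$ of $u$ on a shortest $(u,v)$-path; since $uvw$ is metric, $x' \notin I(u,v)$ and $v' \notin I(v,w) \cup I(w,u)$, and one checks $d(x',v')=2$ with $u \in S_1(x',v')$. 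Now consider the closed walk going $x' \to u \to v'$ then along shortest paths from $v'$ to $v$, $v$ to $w$, $w$ back to $x'$; after shortcutting it one exhibits a cycle that, because the three intervals are internally disjoint and the triangle is metric, must be isometric, and whose length exceeds $5$ once the longest side is at least $3$ — contradicting Lemma~\ref{lem:convex}. (The bookkeeping here is the routine part: one has to rule out that the shortcutting collapses the cycle below length $6$, using the disjointness of the open intervals and the side-length inequalities.)

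Once we know $d(w,u) \le 2$, only finitely many types survive, namely those with all side-lengths in $\{1,2\}$: these are $(1,1,1)$, $(1,1,2)$, $(1,2,2)$ and $(2,2,2)$, together with the permutations listed in the statement for the middle three (the statement writes $(1,2,2),(2,1,2),(2,2,1)$ because it does not pre-sort the sides). So it remains only to exclude type $(1,1,2)$. Suppose $d(u,v) = d(v,w) = 1$ and $d(w,u) = 2$; then $v$ is a common neighbour of $u$ and $w$, so $v \in I(u,w)$, hence $v \in I^o(u,w)$, which is nonempty and meets $I^o(u,v) \cup I^o(v,w)$ trivially only if... actually $v$ being an endpoint of the short sides is fine, but $v \in I^o(w,u)$ directly contradicts that $uvw$ is a metric triangle (the open interval $I^o(w,u)$ would contain $v$, while $v$ is a vertex of the triangle, not an interior point — more precisely, $I^o(u,w) \ni v$ forces $I^o(u,w) \cap \{u,v,w\} \ne \emptyset$, which is incompatible with the metric-triangle definition once we note $v \in I^o(u,v)$? no). The cleaner argument: in a metric triangle of type $(1,1,2)$ the single vertex on the long side's interior is some $z \in I^o(w,u)$ with $z \ne v$; but $v$ is also a common neighbour of $w$ and $u$, so $\{v,z\} \subseteq S_1(w,u)$ and by Lemma~\ref{lem:convex} $S_1(w,u)$ is a clique, whence $v \sim z$; but $z \in I^o(w,u)$ is disjoint from $I^o(v,w) \cup I^o(u,v)$ while $v \sim z$ places $z$ adjacent to $v$, and tracing $w, v, u, z$ one gets $z \in I(v,w)$ or $z \in I(u,v)$ — contradiction. (I will tighten this last step; the essential fact is just that $v \in S_1(u,w)$ already violates internal disjointness because $v$ is a triangle vertex lying strictly between $u$ and $w$.)

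The main obstacle I expect is the first part — bounding the longest side by $2$ via isometric-cycle exclusion — because the shortcutting of the closed walk into a genuinely isometric cycle requires care: one must argue that no chord of the resulting cycle exists, and this is exactly where the pairwise disjointness of the three open intervals together with $d(u,v) \le d(v,w) \le d(w,u)$ is used, possibly invoking Lemma~\ref{lm:1-hyp} to handle the case where the two short-side neighbours $x'$ and $v'$ happen to lie on a common shortest $(u', w)$-path. The remaining case analysis to eliminate $(1,1,2)$ is short. Since this lemma is quoted from~\cite{BaCh2003}, I would in practice cite it directly, but the sketch above records the argument for completeness.
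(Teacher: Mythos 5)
The paper does not prove this lemma at all -- it is imported verbatim from~\cite{BaCh2003} -- so there is no in-paper argument to compare against; judging your sketch on its own merits, the central step is not just incomplete but cannot work as planned. You propose to bound the maximum side-length by $2$ using only Lemma~\ref{lem:convex} (disk convexity, i.e.\ no isometric cycles of length $>5$ plus the $S_1$-clique condition), by ``shortcutting'' the closed walk around the triangle into an isometric cycle of length at least $6$, and you describe the remaining bookkeeping as routine. It is not: graphs with convex disks and no long isometric cycles can contain arbitrarily large metric triangles -- bridged graphs such as the triangular grid have convex disks, no isometric cycles of length $>3$, and equilateral metric triangles of every size -- so no shortcutting scheme starting only from ``metric triangle with a side $\ge 3$ plus convex disks'' can ever produce the forbidden long isometric cycle. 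Any correct proof must use the $\alpha_1$-inequality itself (for instance via Lemma~\ref{lm:1-hyp}, applied at the corners of the triangle) or the exclusion of the isometric $W_6^{++}$ from Theorem~\ref{th:charact} in an essential way; in your sketch these appear only as an optional patch for one sub-case. A smaller but real issue in the same step: your assertion $d(x',v')=2$ for the two neighbours of $u$ is unjustified, since nothing in the metric-triangle hypothesis prevents $x' \sim v'$.

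The exclusion of type $(1,1,2)$ is also muddled. Under the paper's literal definition (pairwise disjointness of the \emph{open} intervals), a triple with $d(u,v)=d(v,w)=1$, $d(u,w)=2$ has $I^o(u,v)=I^o(v,w)=\emptyset$, so disjointness holds vacuously and there is no contradiction of the kind you gesture at; and in your clique argument, $v\sim z$ does not place $z$ in $I(u,v)$ or $I(v,w)$ (those intervals are just the two edges), so that route also yields nothing. What actually rules out $(1,1,2)$ is the standard definition of a metric triangle used in~\cite{BaCh2003} (the closed intervals pairwise meet exactly in the shared end vertex), under which $v\in I(u,w)\cap I(u,v)\setminus\{u\}$ is an immediate violation -- no convexity is needed. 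Since the lemma is quoted from~\cite{BaCh2003}, citing it is the right course; as a self-contained argument, your sketch would need a genuinely different main step before the ``routine'' parts could be filled in.
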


Finally, we conclude this subsection with two more technical results. We note that they could be also deduced from~\cite{chalopin2022graphs}.

\begin{lemma}[see Corollary 6 in~\cite{DrDu23}]\label{lem:C3orC5}
Let $G$ be an $\alpha_1$-metric graph. Then, for every edge $xy\in E$ and a vertex $u\in V$ with $d(u,x)=d(u,y)=k$,  either there is a common neighbour $u'$ of $x$ and $y$ at distance $k-1$ from $u$ or there exists a vertex $u'$ at distance 2 from $x$ and $y$ and at distance $k-2$ from $u$ such that, for every $z\in N(x)\cap N(u')$ and $w\in N(y)\cap N(u')$, the sequence $(x,z,u',w,y)$ forms an induced $C_5$ in $G$.
\end{lemma}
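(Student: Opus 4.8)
The plan is to prove Lemma~\ref{lem:C3orC5} by a careful local analysis around the edge $xy$ and the vertex $u$, exploiting the two structural ingredients of Theorem~\ref{th:charact}: convexity of disks and the absence of an isometric \WW. First I would invoke Lemma~\ref{lem:convex}: since all disks are convex, the slice $S_1$ between any two vertices is a clique and there are no isometric cycles of length greater than $5$. Pick any neighbor $x'\in N(x)\cap I(x,u)$ and any neighbor $y'\in N(y)\cap I(y,u)$, so $d(x',u)=d(y',u)=k-1$. The key sub-case distinction is on $d(x',y')$. If $x'=y'$ we are in the first alternative with $u'=x'$. If $d(x',y')=1$, then $\{x,y,x',y'\}$ spans a $4$-cycle; I would argue that in an $\alpha_1$-metric graph one of $x',y'$ is actually adjacent to both $x$ and $y$ (using that $S_1(x,u)$ and $S_1(y,u)$ are cliques together with the forbidden-subgraph structure), again landing in the first alternative. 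The remaining case is $d(x',y')=2$, which should lead to the $C_5$ alternative.

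In the case $d(x',y')=2$, let $u'$ be a common neighbor of $x'$ and $y'$. First I would show $d(u',u)=k-2$: it cannot be $k$ or larger by triangle inequality through $x'$, and if $d(u',u)=k-1$ then $u'\in S_{k-1}(x',\cdot)$-type reasoning plus convexity of $D(u,k-1)$ (which contains $x',y'$ but should then force $x'y'$ adjacent) gives a contradiction. Next I would show $d(u',x)=d(u',y)=2$: clearly $d(u',x)\le d(u',x')+1=2$ and $u'\ne x$, and $u'\sim x$ is excluded because $d(u,x)=k$ while $d(u,u')=k-2$. Then, given any $z\in N(x)\cap N(u')$ and $w\in N(y)\cap N(u')$, I must verify that $(x,z,u',w,y)$ is an \emph{induced} $C_5$. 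The non-edges $xu'$, $yu'$ are already established; $xy$ is an edge so that is the closing side; I would rule out $zw$, $zy$, $xw$ being edges and $z=w$: for instance $z\sim y$ would make $\{x,y,z\}$ a triangle and push $d(u',y)\le d(u',z)+1$ consistent, but then $x,z\in S_1(\cdot)$ considerations or the isometry of the cycle would be violated; $z=w$ would give $d(x,y)\le 2$ through $z$ but we could then contradict $d(x',y')=2$ or produce an isometric $C_4$ that is excluded since $C_4$ is not a cycle of length $>5$ — here I would instead lean on the clique property of slices and convexity of $D(u',1)$ or $D(u',2)$.

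The main obstacle I expect is the $d(x',y')=1$ case and the verification that the $C_5$ in the last case is genuinely \emph{isometric} as opposed to merely induced (so that the excluded-subgraph hypothesis can be brought to bear if needed, or so that the lemma statement — which only asks for induced — is cleanly met). Concretely, ruling out the chords $zy$, $xw$, $zw$ requires knowing that no shortcut exists between the ``two sides'' of the configuration, and the natural tool is that $D(u,k-1)$ is convex and contains both $x'$ and $y'$: if there were such a chord one could build a shorter $x'$–$y'$ path through $x$ or $y$ inside this disk, contradicting $d(x',y')=2$; alternatively a chord would create an isometric $C_4$, forbidden by Lemma~\ref{lem:convex}. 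I would organize the argument so that each forbidden chord is dispatched by exhibiting either a violation of a slice being a clique, a violation of disk convexity, or the appearance of an isometric $C_4$ (equivalently a non-clique $S_1$). Once these local checks are done, collecting the two alternatives — common neighbor at distance $k-1$, or a distance-$2$ witness $u'$ generating induced $C_5$'s — completes the proof.
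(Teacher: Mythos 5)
This lemma is quoted by the paper from~\cite{DrDu23} without a proof, so I can only judge your argument on its own merits. Its skeleton (split on whether $x,y$ have a common neighbour at distance $k-1$ from $u$; kill chords via clique slices and convexity from Theorem~\ref{th:charact} and Lemma~\ref{lem:convex}) is reasonable, and your treatment of the case $d(x',y')\le 1$ is essentially correct. The genuine gap is at the crux: producing the witness $u'$ with $d(u,u')=k-2$. You take an \emph{arbitrary} common neighbour $u'$ of $x'$ and $y'$ and assert $d(u,u')=k-2$. The triangle inequality does not exclude $d(u,u')=k$, and convexity of $D(u,k-1)$ only yields $d(u,u')\le k-1$ (it certainly does not ``force $x'y'$ adjacent,'' which in any case is not what you need); nothing in your sketch rules out $d(u,u')=k-1$. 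And this can really happen: take the $5$-cycle $(x,y,y',u,x')$ and add a vertex $t$ adjacent to $x',y',u$. This $6$-vertex graph has diameter $2$, all slices $S_1$ are cliques and it contains no forbidden subgraph, so it is $\alpha_1$-metric; here $k=2$, the first alternative fails, and $t\in N(x')\cap N(y')$ has $d(u,t)=k-1$, so your chosen $u'$ may simply violate the distance requirement of the lemma. Arguing that a \emph{suitable} common neighbour (or some other vertex at distance $k-2$ from $u$ and $2$ from $x,y$) exists is exactly the nontrivial content, and it is missing.

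The gap can be closed with tools already stated in the paper. Either apply Lemma~\ref{lm:1-hyp} to the adjacent pair $x',x$ with $x\in I(x',y)$, $x'\in I(x,u)$: since $d(u,y)=k=d(u,x')+d(x,y)$, it yields a neighbour $u'$ of $x'$ in $I(x',u)$ with $d(u',y)=2$ and $d(u,u')=k-2$ (whence also $d(u',x)=2$). Or take a quasi-median of the triple $x,y,u$: because $d(x,y)=1$ and $d(u,x)=d(u,y)$, it must be a metric triangle $x\,y\,u^*$ of type $(1,1,1)$ or $(1,2,2)$ by Lemma~\ref{lem:triangle}, giving respectively the first alternative or the desired $u'$ at distance $2$ from $x,y$ and $k-2$ from $u$. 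Once such a $u'$ is fixed and the first alternative is assumed to fail, the universal ``for every $z,w$'' claim is easier and cleaner than your sketch: any $z\in N(x)\cap N(u')$ and $w\in N(y)\cap N(u')$ satisfy $d(u,z)=d(u,w)=k-1$, so $z=w$, $z\sim y$ or $w\sim x$ would exhibit a common neighbour of $x,y$ at distance $k-1$ from $u$, a contradiction; and $z\sim w$ would create the induced $C_4$ on $x,z,w,y$, so $\{z,y\}\subseteq S_1(x,w)$ would not be a clique, contradicting Lemma~\ref{lem:convex}. Your stated reasons for excluding $z=w$ and $z\sim y$ (``contradict $d(x',y')=2$,'' ``isometry of the cycle'') are not the operative ones, and the lemma only needs the cycle to be induced, not isometric.
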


\begin{corollary}\label{cor:c3}
Let $x,y \in S_k(u,v)$ be adjacent in an $\alpha_1$-metric graph $G$.
Then, $x,y$ have common neighbours in both $S_{k-1}(u,v)$ and $S_{k+1}(u,v)$.
\end{corollary}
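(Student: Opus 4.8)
The plan is to derive this directly from Lemma~\ref{lem:C3orC5}, applied twice: once to the pair $(x,y)$ with respect to $u$, and once with respect to $v$. Fix adjacent vertices $x,y \in S_k(u,v)$, so $d(u,x)=d(u,y)=k$ and $d(v,x)=d(v,y)=d(u,v)-k$. First I would show $x,y$ have a common neighbour in $S_{k-1}(u,v)$. Apply Lemma~\ref{lem:C3orC5} to the edge $xy$ and the vertex $u$. The lemma gives two alternatives: either $x,y$ have a common neighbour $u'$ with $d(u,u')=k-1$, or there is a vertex $u'$ with $d(u',x)=d(u',y)=2$, $d(u,u')=k-2$, and (for appropriate $z,w$) the sequence $(x,z,u',w,y)$ induces a $C_5$. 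I claim the second alternative is impossible here. Indeed, since all disks of an $\alpha_1$-metric graph are convex (Theorem~\ref{th:charact}), and by Lemma~\ref{lem:convex} the slice $S_1$ of any pair is a clique and there are no isometric cycles of length $> 5$; but more to the point, an induced $C_5$ through $x$ and $y$ with $x \sim y$ would force the two non-neighbours of the edge $xy$ on the cycle to be at distance $2$ from both — yet disk convexity applied to $D(u, k-2)$, which must contain $u'$ and a shortest path forcing $z$ or $w$ inside it, contradicts $d(u,z)=d(u,w)=k-1$ unless the $C_5$ degenerates. The cleanest route: the $C_5$ case of Lemma~\ref{lem:C3orC5} cannot occur when $x,y$ both lie on a common shortest $(u,v)$-path, because then $z$ (a neighbour of $x$ with $z \in N(u')$, $d(u,z)=k-1$) together with $w$ would both be at distance $k-1$ from $u$ and at distance $d(u,v)-k \pm 1$ from $v$, and convexity of the disk $D(v, d(u,v)-k)$ (which contains $x,y$) combined with convexity of $D(u,k)$ pins $z,w$ into $S_{k-1}(u,v)$, making $zw$ an edge by the clique property of slices — but $z \not\sim w$ in an induced $C_5$. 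Hence only the first alternative survives, giving the desired common neighbour $u' \in S_{k-1}(u,v)$ (it lies in the slice because $d(u,u')=k-1$ and $d(v,u') \le d(v,x)+1 = d(u,v)-k+1$, with equality forced by $d(u,u')+d(u',v) \ge d(u,v)$).

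Second, I would repeat the argument verbatim with the roles of $u$ and $v$ exchanged: apply Lemma~\ref{lem:C3orC5} to the edge $xy$ and the vertex $v$, with $d(v,x)=d(v,y)=d(u,v)-k$. Ruling out the $C_5$ case in the same way yields a common neighbour of $x$ and $y$ at distance $(d(u,v)-k)-1$ from $v$, which therefore lies in $S_{k+1}(u,v)$ (symmetric membership check). This completes both halves.

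The main obstacle I anticipate is cleanly excluding the $C_5$ alternative of Lemma~\ref{lem:C3orC5}. The statement of that lemma does not a priori know that $x,y$ sit on a common $(u,v)$-geodesic, so the exclusion argument has to genuinely use that extra hypothesis together with disk convexity (Theorem~\ref{th:charact} / Lemma~\ref{lem:convex}) — in particular the facts that $S_1(\cdot,\cdot)$ is always a clique and that intervals between $u$ (resp.\ $v$) and points of the $C_5$ are convex. One must be careful that the vertex $u'$ produced in the $C_5$ case is at distance $k-2$ from $u$ but its distance to $v$ could be as large as $d(u,v)-k+2$, so a short case check (or a direct appeal to Lemma~\ref{lm:1-hyp} about when two geodesics with a common terminal edge fail to concatenate into a geodesic) is needed to reach the contradiction. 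Everything else is a routine membership verification for the slices $S_{k-1}$ and $S_{k+1}$.
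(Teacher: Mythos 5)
Your overall plan coincides with the paper's proof: apply Lemma~\ref{lem:C3orC5} to the edge $xy$ (once towards $u$, once towards $v$, or equivalently argue one side and invoke symmetry) and rule out the induced-$C_5$ alternative using convexity of disks (Theorem~\ref{th:charact} together with Lemma~\ref{lem:convex}). However, the step you rely on for the contradiction fails as written. You correctly pin $z$ and $w$ (the neighbours of $x$, resp.\ $y$, on the $C_5$ through $u'$) into $S_{k-1}(u,v)$ --- this needs only the triangle inequality, since $d(u,z)=d(u,w)=k-1$ and $d(v,z),d(v,w)\le d(u,v)-k+1$ --- but you then conclude ``$zw\in E$ by the clique property of slices''. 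Lemma~\ref{lem:convex} asserts this only for the slices $S_1(\cdot,\cdot)$, i.e.\ at distance one from an endpoint; a general slice $S_{k-1}(u,v)$ of an $\alpha_1$-metric graph need \emph{not} be a clique (the bound of Lemma~\ref{lem:thinness} is sharp: the graph of Fig.~\ref{fig-thinness-sharp}(a) is $\alpha_1$-metric with interval thinness $2$, so a slice may contain two vertices at distance two). Hence membership of $z,w$ in $S_{k-1}(u,v)$ yields no edge and no contradiction. Your earlier attempted exclusion via ``disk convexity applied to $D(u,k-2)$'' is also not a valid argument: $z$ and $w$ lie at distance $k-1$ from $u$, and no shortest path between vertices of $D(u,k-2)$ is forced through them.

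The repair is small, and it is exactly what the paper does: measure the slice from $u'$ rather than from $u$ or $v$. From $d(u,u')=k-2$ and $d(u',x)=2$ one gets $d(u',v)=d(u,v)-k+2$ (so $u'\in S_{k-2}(u,v)$ and $x,y\in S_2(u',v)$), while $d(z,v)=d(w,v)=d(u,v)-k+1$ and $z,w\in N(u')$; hence $z,w\in S_1(u',v)$, and this slice \emph{is} a clique by Lemma~\ref{lem:convex}, since all disks are convex by Theorem~\ref{th:charact}. This forces $z=w$ or $zw\in E$, contradicting that $(x,z,u',w,y)$ is an induced $C_5$. With that substitution your argument goes through; the remaining membership checks (that the common neighbour produced by the first alternative lies in $S_{k-1}(u,v)$, and the symmetric statement for $S_{k+1}(u,v)$) are correct as you wrote them, and your closing appeal to Lemma~\ref{lm:1-hyp} is not needed.
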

\begin{proof}
In what follows, we only prove that $x,y$ have a common neighbour in $S_{k-1}(u,v)$ (the other part of the proof is symmetric).
Suppose, for the sake of contradiction, that it is not the case.
By Lemma~\ref{lem:C3orC5}, there exists a vertex $w$ such that $d(x,w) = d(y,w) = 2$ and $d(u,w) = k-2$.
Note that in this situation, $w \in S_{k-2}(u,v)$ and $x,y \in S_2(w,v)$.
Let $x' \in N(x) \cap N(w)$ and let $y' \in N(y) \cap N(w)$.
Again, by Lemma~\ref{lem:C3orC5}, the sequence $(x,x',w,y',y)$ induces a $C_5$.
It implies that $x',y' \in S_1(w,v)$ are nonadjacent, thus implying that not all the disks of $G$ are convex by Lemma~\ref{lem:convex}.
More specifically, $D(v,d(v,w)-1)$ is not convex, which is because $x',y' \in D(v,d(v,w)-1)$ but $w \in I(x',y') \setminus D(v,d(v,w)-1)$.
However, the latter contradicts Theorem~\ref{th:charact}. \qed
\end{proof}

    \subsection{Hyperbolicity}\label{sec:hyp-i-1}

Our strategy in order to prove Theorem~\ref{thm:alpha-1} is essentially the same as what we did for Theorem~\ref{thm:main}: we upper bound the interval thinness of the injective hull. For that, we heavily use structural and metric properties of $\alpha_1$-metric graphs. 

We first improve Lemma~\ref{lem:three-balls}, but only for the $\alpha_1$-metric graphs.

\begin{lemma}\label{lem:alpha-1-thinness}
Let $D(u,r_u),D(v,r_v),D(w,r_w)$ be pairwise intersecting disks of $G$ such that $d(u,v) = r_u + r_v$.
If $G$ is an $\alpha_1$-metric graph, then {\em every} vertex $x \in S_{r_u}(u,v) (= S_{r_v}(v,u))$ satisfies $d(w,x) \leq r_w + 2$.
\end{lemma}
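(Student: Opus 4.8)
The plan is to mimic the argument of Lemma~\ref{lem:three-balls}, but to exploit the extra hypothesis $d(u,v) = r_u + r_v$ together with the convexity of disks (Theorem~\ref{th:charact}) so as to shave the defect from $+i$ down to $+2$. The key structural observation is that, since $d(u,v) = r_u + r_v$, the set $S_{r_u}(u,v) = S_{r_v}(v,u)$ is exactly the set of vertices lying at distance $r_u$ from $u$ and $r_v$ from $v$ on some shortest $(u,v)$-path, and by Lemma~\ref{lem:convex} this slice is a clique (it is $S_1$ of a pair of vertices in a disk of radius one, after rescaling — more directly, $S_k(u,v)$ for an $\alpha_1$-metric graph has diameter at most $i+1 = 2$, but here we want more: adjacent vertices of the slice have common neighbours in the neighbouring slices, by Corollary~\ref{cor:c3}). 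So instead of choosing a \emph{single} well-placed vertex of the slice as in Lemma~\ref{lem:three-balls}, I want to argue uniformly for \emph{every} vertex of the slice.

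First I would fix an arbitrary $x \in S_{r_u}(u,v)$ and let $x_0 \in S_{r_u}(u,v)$ be a vertex of the slice minimizing $d(w,x_0)$; by Lemma~\ref{lem:three-balls} applied with the three disks (whose pairwise intersection is witnessed, for the $u,v$-pair, by \emph{any} slice vertex), we get $d(w,x_0) \le r_w + i$, but I actually want the sharper bound $d(w,x_0) \le r_w$, which should follow from a direct minimality argument: if $d(w,x_0) > r_w$, take a neighbour $y \in N(x_0) \cap I(x_0,w)$; since $D(w,r_w)$ meets $S_{r_u}(u,v)$, $x_0$ being closest forces $y \notin S_{r_u}(u,v)$, hence $d(u,y)$ and $d(v,y)$ cannot both stay on a shortest $(u,v)$-path, and one of $d(u,y) = r_u + 1$, $d(v,y) = r_v + 1$ holds; combining with $d(u,v) = r_u + r_v$ and the triangle inequality through $y$ and then through $w$ yields a contradiction with $d(u,v) = d(u,w$-side$)$. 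So $d(w,x_0) \le r_w$. Then I would connect $x_0$ to the arbitrary target $x$ inside the slice: since the slice is a clique of diameter $\le 2$, either $x = x_0$, or $x \sim x_0$, or there is a third slice vertex adjacent to both; in every case $d(x,x_0) \le 2$, and hence $d(w,x) \le d(w,x_0) + d(x_0,x) \le r_w + 2$, which is the claim.

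The step I expect to be the main obstacle is establishing the sharp bound $d(w,x_0) \le r_w$ (rather than $r_w + i$) for the minimizing slice-vertex $x_0$: the naive minimality argument from Lemma~\ref{lem:three-balls} only gives $r_w + i$ because moving one step off the optimum may leave the slice, and controlling what happens then requires the exact equality $d(u,v) = r_u + r_v$ to be used crucially, so that leaving the slice strictly increases the distance to $u$ or to $v$ by exactly one. I would need to handle carefully the case where $y$ leaves the slice ``sideways'' (same distances to $u$ and $v$ but off every shortest $(u,v)$-path), which by Lemma~\ref{lem:convex} cannot happen when disks are convex — that is precisely where Theorem~\ref{th:charact} enters. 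Once that is pinned down, the remaining bound $d(x,x_0)\le 2$ is immediate from Lemma~\ref{lem:thinness} (interval thinness $\le i+1 = 2$) with no further work.
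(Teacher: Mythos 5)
Your plan hinges on the claim that the slice vertex $x_0 \in S_{r_u}(u,v)$ minimizing $d(w,x_0)$ satisfies the sharp bound $d(w,x_0) \le r_w$, and this claim is false. A concrete counterexample is the $3$-sun (a chordal, hence $\alpha_1$-metric, graph): let $a,b,c$ be the inner triangle and $u,v,w$ the three degree-two vertices with $u \sim a,b$, $v \sim b,c$, $w \sim a,c$, and take $r_u=r_v=r_w=1$. The three unit disks pairwise intersect and $d(u,v)=2=r_u+r_v$, but the slice $S_1(u,v)=\{b\}$ consists of a single vertex with $d(w,b)=2=r_w+1$. So $D(w,r_w)$ need not meet the slice at all; the sketched minimality argument cannot do better than the $\alpha_1$-metric inequality $d(u,w) \ge d(u,x_0)+d(x_0,w)-1$, which, together with $d(u,w)\le r_u+r_w$, gives only $d(w,x_0)\le r_w+1$ --- exactly Lemma~\ref{lem:three-balls} with $i=1$, and the $3$-sun shows this is tight. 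The convexity of disks (Theorem~\ref{th:charact}) does not rescue the step: the neighbour $y$ of $x_0$ towards $w$ genuinely can leave the slice with $d(u,y)=r_u+1$ or $d(v,y)=r_v+1$ without any contradiction arising.

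Because of this, your final estimate collapses: with the correct bound $d(w,x_0)\le r_w+1$ and slice diameter at most $i+1=2$ (Lemma~\ref{lem:thinness}), you only obtain $d(w,x)\le r_w+3$ for an arbitrary slice vertex $x$, which is the easy bound and strictly weaker than the lemma. Improving $r_w+3$ to $r_w+2$ is precisely the hard content, and the paper's proof has to work much harder for it: assuming $d(w,x)>r_w+2$, it takes a quasi-median $u'v'w'$ of $u,v,w$, uses the classification of metric triangle types in $\alpha_1$-metric graphs (Lemma~\ref{lem:triangle}) to pin down $d(u',v')=2$, shows the relevant slice vertex $y$ satisfies exactly $d(y,w)=r_w+1$ and $d(x,y)=2$, and then builds, via Lemma~\ref{lem:convex} and Corollary~\ref{cor:c3}, an isometric copy of $W_6^{++}$, contradicting Theorem~\ref{th:charact}. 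Some forbidden-subgraph (or comparably global) argument of this kind appears unavoidable; a purely local minimality-plus-thinness argument of the type you propose cannot reach the stated constant.
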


\begin{proof}
Suppose, by way of contradiction, that $d(x,w) > r_w + 2$ for some $x \in S_{r_u}(u,v)$.

Let $u'v'w'$ be a quasi median for the triple $u,v,w$.
If $d(u,u') \geq r_u$, then we consider a vertex $x_u \in S_{r_u}(u,u')$.
Note that $x_u \in I(u,v) \cap I(u,w)$.
It implies $d(x_u,v) = r_v$ and $d(x_u,w) \leq r_w$.
By Lemma~\ref{lem:thinness} and as $x,x_u \in S_{r_u}(u,v)$, $d(x,x_u) \leq 2$, and so $d(x,w) \leq r_w + 2$, leading to a contradiction.
In the same way, if $d(u,v') \leq r_u$, then (up to reverting the respective roles of $u$ and $v$) we can prove similarly as above that $d(x,w) \leq r_w + 2$.
Thus, from now on, we assume that $d(u,u') < r_u < d(u,v')$.
By Lemma~\ref{lem:triangle}, it implies $d(u',v') = 2$, and so $d(u,u') = r_u - 1, \ d(v,v') = r_v - 1$.
Let $y \in N(u') \cap N(v')$ be arbitrary. 
Since we have $u',v' \in D(w,r_w+1)$, $y \in I(u',v')$, and as by Theorem~\ref{th:charact} the disk $D(w,r_w+1)$ is convex, $d(y,w) \leq r_w + 1$.
If $d(y,w) \leq r_w$, then again, by Lemma~\ref{lem:thinness}, $d(x,w) \leq d(x,y) + d(y,w) \leq r_w + 2$, giving a contradiction.
Therefore, from now on, $d(y,w) = r_w + 1$.

We construct an isometric copy of \WW as follows (see also Fig.~\ref{fig:forbidden-construct-1}).

\begin{figure}[!ht]
    \centering
    \begin{tikzpicture}

        \draw[dashed] (-5,1) -- (-1,2); \draw[dashed] (-5,1) -- (-1,1); \draw[dashed] (-5,1) -- (-1,0);
        \draw[dashed] (5,1) -- (1,2); \draw[dashed] (5,1) -- (1,1); \draw[dashed] (5,1) -- (2,0);
        \draw[dashed] (0,5) -- (0,3.5);
        \draw (-1,2) -- (1,2) -- (0,3.5) -- (-1,2); \draw (0,2) -- (0,0);
        \draw (-1,2) -- (-1,0) -- (1,0) -- (1,2);
        \draw (1,0) -- (2,0) -- (1,1);
        \draw (-1,1) -- (0,2) -- (1,1);
        \draw (-1,1) -- (1,1);
        \draw (-1,0) -- (0,1) -- (1,0);
        
        \draw[thick,red] (-1,2) -- (-1,0) -- (2,0) -- (0,2) -- (-1,2);
        \draw[thick, red] (1,0) -- (1,1); \draw[thick,red] (0,2) -- (-1,1);
        \draw[thick,red] (-1,1) -- (1,1); \draw[thick,red] (0,0) -- (0,2);
        \draw[thick,red] (-1,0) -- (0,1) -- (1,0);

        \node[circle,fill=black,inner sep=0pt,minimum size=5pt, label=below:{$x$}] at (0,0) {};
        \node[circle,fill=black,inner sep=0pt,minimum size=5pt, label=above:{$y$}] at (0,2) {};
        \node[circle,fill=black,inner sep=0pt,minimum size=5pt, label=above:{$u'$}] at (-1,2) {};
        \node[circle,fill=black,inner sep=0pt,minimum size=5pt, label=above:{$v'$}] at (1,2) {};
        \node[circle,fill=black,inner sep=0pt,minimum size=5pt, label=right:{$w'$}] at (0,3.5) {};
        \node[circle,fill=black,inner sep=0pt,minimum size=5pt, label=above:{}] at (-.5,2.75) {};
        \node[circle,fill=black,inner sep=0pt,minimum size=5pt, label=above:{}] at (.5,2.75) {};
        \node[circle,fill=black,inner sep=0pt,minimum size=5pt, label=above:{$w$}] at (0,5) {};
        \node[circle,fill=black,inner sep=0pt,minimum size=5pt, rotate=-45, label=left:{$z$}] at (0,1) {};
        \node[circle,fill=black,inner sep=0pt,minimum size=5pt, label=below:{$a_u$}] at (-1,0) {};
        \node[circle,fill=black,inner sep=0pt,minimum size=5pt, rotate=-45, label=left:{$b_u$}] at (-1,1) {};
        \node[circle,fill=black,inner sep=0pt,minimum size=5pt, label=below:{$a_v$}] at (1,0) {};
        \node[circle,fill=black,inner sep=0pt,minimum size=5pt, rotate=45, label=right:{$b_v$}] at (1,1) {};
        \node[circle,fill=black,inner sep=0pt,minimum size=5pt, label=below:{$c_v$}] at (2,0) {};
        \node[circle,fill=black,inner sep=0pt,minimum size=5pt, label=below:{$u$}] at (-5,1) {};
        \node[circle,fill=black,inner sep=0pt,minimum size=5pt, label=below:{$v$}] at (5,1) {};

    \end{tikzpicture}
    \caption{To the proof of Lemma~\ref{lem:alpha-1-thinness}: construction of a forbidden \WW.}
    \label{fig:forbidden-construct-1}
\end{figure}

\begin{itemize}
    \item By Lemma~\ref{lem:thinness}, $d(x,w) \leq 2 + d(y,w) = r_w + 3$.
          Therefore, $d(x,y) = 2$ and $d(x,w) = r_w + 3$.
          Let $z \in N(x) \cap N(y)$.
          As $x,y \in S_{r_u}(u,v)$, and as by Theorem~\ref{th:charact} the disks $D(u,r_u),D(v,r_v)$ are convex, $z \in S_{r_u}(u,v)$.
    \item Let $a_u \in N(x) \cap S_{r_u-1}(u,v)$.
          By Lemma~\ref{lem:thinness}, $d(a_u,u') \leq 2$.
          We prove, as an intermediate claim, that $d(x,u') = 3$ (and therefore, $d(a_u,u') = 2$).
          Indeed, if it was not the case, then we would obtain $d(x,u') = 2$ and $d(u',w) = r_w + 1$. In particular, $u' \in I(x,w)$.
          We fix a shortest $(u',x)$-path, and we note that it contains an edge $st$ such that $d(u',s) < d(u',t), \ d(v,s) = r_v+1$ and $d(v,t) = r_v$ (such an edge exists because $d(v,u') = r_v+1$, $d(v,x) = r_v$, and $d(u',x) = 2$).
          By the $\alpha_1$-metric property applied to $w,s,t,v$, $d(v,w) \geq d(v,t) + d(s,w) \geq d(v,t) + d(u',w) = r_v + r_w + 1$, giving a contradiction.
          As a result, $d(x,u') = 3$ and $d(a_u,u') = 2$.
          Let $b_u \in N(u') \cap N(a_u)$.
          As $u',a_u \in S_{r_u-1}(u,v)$, and as by Theorem~\ref{th:charact} the disks $D(u,r_u-1),D(v,r_v+1)$ must be convex, $b_u \in S_{r_u-1}(u,v)$.
    \item Let $a_v \in N(x) \cap S_{r_u+1}(u,v)$.
          We prove as before that $d(v',x) = 3$ and $d(v',a_v) = 2$.
          Let $b_v \in N(v') \cap N(a_v)$.
          As $v',a_v \in S_{r_u+1}(u,v)$, and as by Theorem~\ref{th:charact} the disks $D(u,r_u+1),D(v,r_v-1)$ are convex, $b_v \in S_{r_u+1}(u,v)$.
    \item Last, let $c_v \in N(a_v) \cap N(b_v) \cap S_{r_u+2}(u,v)$, that exists by Corollary~\ref{cor:c3}.
\end{itemize}

\begin{claimf}
    $X = \{u',a_u,b_u,x,y,z,a_v,b_v,c_v\}$ induces a \WW.
\end{claimf} 
Since $b_u,y \in S_1(u',x)$, by Lemma~\ref{lem:convex}, we have $b_uy \in E$.
In the same way, $b_vy \in E$.
Hence, there is a cycle with vertex-set $\{x,a_v,b_v,y,b_u,a_u\}$.
Furthermore $d(x,b_v) = d(x,y) = d(x,b_u) = 2$, and also there is no edge between $\{a_u,b_u\}$ and $\{a_v,b_v\}$. 
Suppose, by way of contradiction, that $ya_u \in E$. Then, $a_u \in I(x,y) \setminus D(v,r_v)$, which implies $D(v,r_v)$ is not convex, thus contradicting Theorem~\ref{th:charact}.
Therefore, $ya_u \notin E$, and we can prove similarly $ya_v \notin E$.
Doing so, we obtain that the $C_6$ with vertex-set $\{x,a_v,b_v,y,b_u,a_u\}$ is induced.
Next, we prove that $z$ is adjacent to every vertex of $\{x,a_v,b_v,y,b_u,a_u\}$.
For $x,y$ that follows from the definition of $z$ and for $a_u,a_v$ that follows from Lemma~\ref{lem:convex} because $z,a_u \in S_1(x,u')$ and $z,a_v \in S_1(x,v')$.
Now, there is a $C_4$ with vertex-set $\{z,a_u,b_u,y\}$. Since we already proved that $ya_u \notin E$, by Lemma~\ref{lem:convex}, we obtain $zb_u \in E$.
In the same way, by considering a $C_4$ with vertex-set $\{z,a_v,b_v,y\}$, we obtain that $zb_v \in E$.
Doing so, we get that  $\{x,a_v,b_v,y,b_u,a_u,z\}$ induces a copy of the wheel $W_6$.
By the choice of $c_v$, its only neighbours in $X$ are $a_v,b_v$.
Last, since $u' \in S_{r_u-1}(u,v)$ and $d(u',x) = 3$, we obtain that the only neighbours of $u'$ in $X$ are $b_u,y$.
So, we proved as claimed that $X$ induces a \WW. {\hfill $\diamond$}

\smallskip
It remains to prove that $G[X]$ is an isometric subgraph, thus directly contradicting Theorem~\ref{th:charact}.
The only pairs of vertices that are at distance $3$ in $G[X]$ are: $(u',x)$, $(u',a_v)$, $(u',c_v)$, $(a_u,c_v)$, $(b_u,c_v)$.
Note that we already proved that $d(u',x) = 3$. 
Furthermore, $d(u',a_v) > 2$ because otherwise $a_v \notin D(u,r_u)$ would be on a shortest $(u',x)$-path, thus contradicting Theorem~\ref{th:charact} because $D(u,r_u)$ would not be convex.
Finally, $\min\{d(u',c_v),d(a_u,c_v),d(b_u,c_v)\} \geq 3$ because $u',a_u,b_u \in S_{r_u-1}(u,v)$ and $c_v \in S_{r_u+2}(u,v)$. \qed
\end{proof}

We need one more technical lemma.

\begin{lemma}\label{lem:close-balls}
Let $u,v,a,b$ be vertices of a graph $G=(V,E)$ such that:
\begin{itemize}
    \item $d(u,v) = r_u + r_v$;
    \item $d(u,a) \leq r_u + r_a, \ d(v,a) \leq r_v + r_a$;
    \item $d(u,b) \leq r_u + r_b, \ d(v,b) \leq r_v + r_b$.
\end{itemize}
If $G$ is an $\alpha_1$-metric graph, then $d(a,b) \leq r_a + r_b + 2$.
\end{lemma}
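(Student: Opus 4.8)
The plan is to normalize the radii first, dispatch a degenerate case directly from Lemma~\ref{lem:alpha-1-thinness}, and then reduce the generic case to locating approximate projections of $a$ and $b$ onto a common shortest $(u,v)$-path via quasi-medians.

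\emph{Normalization.} Replacing $r_a$ by $\max\{d(u,a)-r_u,\,d(v,a)-r_v,\,0\}$, and symmetrically $r_b$, only strengthens the statement: the new value is at most the old one, while the three hypotheses on $a$ (resp. $b$) and the equality $d(u,v)=r_u+r_v$ are preserved. After this normalization there are, for $a$, only two possibilities: either $r_a=0$, in which case $d(u,a)\le r_u$ and $d(v,a)\le r_v$ together with the triangle inequality force $d(u,a)=r_u$, $d(v,a)=r_v$, hence $a\in S_{r_u}(u,v)$; or $r_a>0$ and at least one of $d(u,a)=r_u+r_a$, $d(v,a)=r_v+r_a$ holds. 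Likewise for $b$.

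\emph{Degenerate case $r_a=0$ (or $r_b=0$).} Suppose $a\in S_{r_u}(u,v)$. Since $d(u,v)=r_u+r_v$, the disks $D(u,r_u),D(v,r_v),D(b,r_b)$ are pairwise intersecting (for $D(u,r_u)\cap D(b,r_b)$ take $b$ itself when $d(u,b)\le r_u$, otherwise a vertex of $I(u,b)$ at distance $r_u$ from $u$; symmetrically for $v$). Applying Lemma~\ref{lem:alpha-1-thinness} to the pair $(u,v)$ with third disk $D(b,r_b)$, every vertex of $S_{r_u}(u,v)$, and in particular $a$, satisfies $d(b,a)\le r_b+2=r_a+r_b+2$. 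The case $r_b=0$ is symmetric, so from now on $r_a,r_b>0$.

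\emph{Generic case.} Fix quasi-medians $u'_av'_aa'$ of the triple $(u,v,a)$ and $u'_bv'_bb'$ of $(u,v,b)$. By Lemma~\ref{lem:triangle} the underlying metric triangles have all side lengths in $\{1,2\}$, so $u'_a,v'_a$ (resp. $u'_b,v'_b$) lie on a common shortest $(u,v)$-path within distance $2$ of each other, while $a'$ (resp. $b'$) lies on a shortest $(u,a)$-path (resp. $(u,b)$-path) within distance $2$ of that path. Plugging the small side lengths and the tight equality ($d(u,a)=r_u+r_a$ or $d(v,a)=r_v+r_a$) into the three defining identities of the quasi-median, together with $d(u,v)=r_u+r_v$, should yield $d(u,u'_a)\ge r_u-1$ on the $u$-tight side (resp. $d(v,v'_a)\ge r_v-1$ on the $v$-tight side) and $d(a,a')\le r_a+1$, and symmetrically for $b$. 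One then picks genuine vertices $m_a\in I(u,v)$ close to $a'$ and $m_b\in I(u,v)$ close to $b'$, lying in slices at levels differing by $O(1)$ around $r_u$, bounds $d(m_a,m_b)$ by a constant using the interval thinness (Lemma~\ref{lem:thinness}) and Lemma~\ref{lm:auxiliary-GD}, and adds the pieces $d(a,m_a)+d(m_a,m_b)+d(m_b,b)$.

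\emph{Main obstacle.} The whole difficulty is to make the arithmetic land on the additive constant $2$ rather than the $3$ or $4$ that the "soft" tools (three balls, interval thinness) deliver on their own; the accounting is tight and leaves no room to spare. In particular, the sub-case in which the tight sides of $a$ and $b$ are opposite — one equality witnessed through $u$, the other through $v$ — does not clearly fit the projection template and may instead have to be settled by explicitly building a forbidden isometric \WW, in the spirit of (and reusing parts of) the proof of Lemma~\ref{lem:alpha-1-thinness}, invoking Theorem~\ref{th:charact} together with Corollary~\ref{cor:c3} and Lemma~\ref{lem:convex}.
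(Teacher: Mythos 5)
Your normalization and the degenerate case $r_a=0$ (or $r_b=0$) are correct: reducing $r_a$ to $\max\{d(u,a)-r_u,\,d(v,a)-r_v,\,0\}$ preserves the hypotheses and strengthens the conclusion, and when the normalized radius vanishes the vertex lies in $S_{r_u}(u,v)$, so Lemma~\ref{lem:alpha-1-thinness} finishes that case. But the generic case, which is the entire content of the lemma, is not proved. Your quasi-median/projection scheme, even if the arithmetic you leave as ``should yield'' were carried out, can only produce what the soft tools give: Lemma~\ref{lem:three-balls} provides $x\in S_{r_u}(u,v)$ with $d(a,x)\le r_a+1$, and Lemma~\ref{lem:alpha-1-thinness} gives $d(x,b)\le r_b+2$, hence $d(a,b)\le r_a+r_b+3$ --- one more than claimed. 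You acknowledge this yourself (``the accounting is tight and leaves no room to spare'') and then defer the decisive sub-case to a forbidden-subgraph construction ``in the spirit of'' Lemma~\ref{lem:alpha-1-thinness} without supplying it. That deferred construction is not a corner case: it is the proof.

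For comparison, the paper's argument assumes $d(a,b)\ge r_a+r_b+3$, uses exactly the two bounds above to force $d(a,b)=r_a+r_b+3$ with $x\in I(a,b)$, and then works outward from $x$: it applies the $\alpha_1$-metric property along the $(x,b)$- and $(x,a)$-directions, invokes Lemma~\ref{lm:1-hyp} to produce pairs of ``witness'' neighbours ($w_u,w_{ub}$, $w_v,w_{vb}$, $w_u',w_{ua}'$, $w_v',w_{va}'$), and, after a minimality argument and a case split on $d(w_u,w_u')$ and $d(w_v,w_v')$, assembles (via Corollary~\ref{cor:c3}, Lemma~\ref{lem:convex} and disk convexity from Theorem~\ref{th:charact}) an isometric copy of \WW, contradicting Theorem~\ref{th:charact}. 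None of this case analysis, nor any replacement for it, appears in your proposal; in particular your suggestion that only the ``opposite tight sides'' sub-case needs the forbidden-subgraph argument is not substantiated, and the remaining sub-cases are not shown to reach the constant $2$ either. As it stands, the proposal establishes only $d(a,b)\le r_a+r_b+3$ in general, plus the easy boundary case, so there is a genuine gap.
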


\begin{proof}
Suppose, by way of contradiction, that $d(a,b) > r_a + r_b + 2$.
The reader can follow our construction using Fig.~\ref{fig:forbidden-construct-2}.
Our proof, in what follows, is divided into several intermediate claims.

\begin{claimf}\label{claim:1}
    $d(a,b) = r_a+r_b+3$. 
    Furthermore, there exist $x,y \in I(a,b) \cap S_{r_u}(u,v)$ such that $x \sim y$, $d(a,x) = r_a+1$, and $d(b,y) = r_b+1$.
\end{claimf}
By Lemma~\ref{lem:three-balls}, there exists a vertex $x \in S_{r_u}(u,v)$ such that $d(a,x) \leq r_a + 1$.
By Lemma~\ref{lem:alpha-1-thinness}, $d(x,b) \leq r_b + 2$.
As a result, $d(a,b) = r_a + r_b + 3$.
Furthermore, $x \in I(a,b)$.
Let $y \in S_1(x,b)$ be arbitrary.
If $d(u,y) > r_u$ then, by the $\alpha_1$-metric property applied to $u,x,y,b$, we obtain $d(u,b) \geq d(u,x) + d(y,b) = r_u + r_b +1$, and a contradiction with the assumption of the lemma arises.
In the same way, if $d(v,y) > r_v$ then, by the $\alpha_1$-metric property applied to $v,x,y,b$, we obtain $d(v,b) \geq r_v + r_b + 1$, giving a contradiction again.
As a result, $y \in S_{r_u}(u,v)$.{\hfill $\diamond$}

\begin{claimf}\label{claim:2}
    The slices $S_{r_u}(u,v)$, $S_1(x,a)$, and $S_1(y,b)$ are pairwise disjoint.        
\end{claimf}
As by Claim~\ref{claim:1}, $xy$ is an edge of some shortest $(a,b)$-path, $S_1(x,a)$, and $S_1(y,b)$ are disjoint.
We only prove, in what follows, that $S_{r_u}(u,v)$ and $S_1(y,b)$ are also disjoint (the proof that $S_{r_u}(u,v)$ and $S_1(x,a)$ are also disjoint is obtained with symmetric reasoning).
For that, let $z \in S_1(y,b)$ be arbitrary.
Suppose, for the sake of contradiction, $z \in S_{r_u}(u,v)$.
By Lemma~\ref{lem:alpha-1-thinness}, $d(z,a) \leq r_a + 2$, and so $d(a,b) \leq d(a,z) + d(z,b) \leq r_a + r_b + 2$, a contradiction.{\hfill $\diamond$}

\begin{claimf}\label{claim:3}
    For any vertex $p \in \{u,v\}$ such that $S_1(y,b) \not\subseteq D(p,r_p)$, $d(p,b) = r_p+r_b$.
    Furthermore, there exist $w_p,w_{pb} \in N(y)$ such that: $w_p \sim w_{pb}$, $d(p,w_{pb}) = d(p,w_p)+1 = r_p$, and $d(w_{pb},b) = r_b$.
\end{claimf}
Let $z \in S_1(y,b)$ be satisfying $d(z,p) > r_p$. As $y \in S_{r_u}(u,v)$, $d(z,p) = r_p+1$.
By the $\alpha_1$-metric property applied to $p,y,z,b$, $d(p,b) \geq d(p,y) + d(z,b) = r_p + r_b$. 
This together with the assumption $d(p,b) \leq r_p + r_b$ implies $d(p,b) = d(p,y) + d(z,b) = r_p + r_b$. 
By Lemma~\ref{lm:1-hyp}, there exist $w_p \in S_1(y,p)$ and $w_b \in S_1(z,b)$ satisfying $d(w_p,w_b) = 2$.
By Corollary~\ref{cor:c3}, there also exists some vertex $c \in S_1(x,p) \cap S_1(y,p)$.
Among all the possibilities for the triple $w_p,w_b,c$, we choose one such that $d(w_p,c)$ is minimized.
Doing so, if $d(c,w_b) = 2$, then as $c \in S_1(y,p)$, we have $c = w_p$.

Let $w_{pb} \in N(w_p) \cap N(w_b)$.
To prove the claim, it suffices to prove that $w_{pb} \sim y$.
For that, we start proving that $w_{pb} \sim z$. 
This is done by proving that either $w_{pb},z \in S_1(w_b,a)$ or $w_{pb},z \in S_1(w_b,c)$, and then applying Lemma~\ref{lem:convex}. 
Since $c,w_p \in S_1(y,p)$, by Lemma~\ref{lem:convex}, either $c = w_p$ or $cw_p \in E$.
We consider both cases separately.
\begin{figure}[!ht]
    \centering
    \begin{tikzpicture}

            \node[circle,fill=black,inner sep=0pt,minimum size=5pt, label=above:{$u$}] at (0,5) {};
            \node[circle,fill=black,inner sep=0pt,minimum size=5pt, label=below:{$v$}] at (0,-5) {};
            \node[circle,fill=black,inner sep=0pt,minimum size=5pt, label=left:{$a$}] at (-8,0) {};
            \node[circle,fill=black,inner sep=0pt,minimum size=5pt, label=right:{$b$}] at (8,0) {};

            \node[circle,fill=black,inner sep=0pt,minimum size=5pt, rotate=45,label=below:{$x$}] at (-1,0) {};
            \node[circle,fill=black,inner sep=0pt,minimum size=5pt, rotate=-45, label=below:{$y$}] at (1,0) {};
            \node[circle,fill=black,inner sep=0pt,minimum size=5pt, label=above:{$z$}] at (2,-.2) {};

            \node[circle,fill=black,inner sep=0pt,minimum size=5pt, rotate=-45,label=right:{$w_u$}] at (1,1) {};
            \node[circle,fill=black,inner sep=0pt,minimum size=5pt, rotate=45, label=below:{$w_b$}] at (3.5,0) {};
            \node[circle,fill=black,inner sep=0pt,minimum size=5pt, label=above:{$w_{ub}$}] at (3,1) {};

            \node[circle,fill=black,inner sep=0pt,minimum size=5pt, label=below:{$c$}] at (0,1) {};

            \node[circle,fill=black,inner sep=0pt,minimum size=5pt, rotate=-45,label=right:{$w_v$}] at (1,-1) {};
            \node[circle,fill=black,inner sep=0pt,minimum size=5pt, rotate=-45,label=right:{$w_{vb}$}] at (3,-1) {};

            \node[circle,fill=black,inner sep=0pt,minimum size=5pt, rotate=45,label=right:{$w_u'$}] at (-1,1) {};
            \node[circle,fill=black,inner sep=0pt,minimum size=5pt, label=above:{$w_{ua}'$}] at (-3,1) {};
            \node[circle,fill=black,inner sep=0pt,minimum size=5pt, rotate=-45,label=right:{$w_v'$}] at (-1,-1) {};
            \node[circle,fill=black,inner sep=0pt,minimum size=5pt, rotate=-45,label=right:{$w_{va}'$}] at (-3,-1) {};

            \node at (-4,.2) {$r_a+1$};
            \node at (-6,.6) {$r_a$};
            \node at (-6,-.6) {$r_a$};
            \node at (5,.2) {$r_b-1$};
            \node at (6,.6) {$r_b$};
            \node at (6,-.6) {$r_b$};
            \node at (0,2) {$r_u-1$};
            \node at (-1.1,3) {$r_u-1$};
            \node at (1.1,3) {$r_u-1$};
            \node at (-1.1,-3) {$r_v-1$};
            \node at (1.1,-3) {$r_v-1$};

            \draw[dashed] (-8,0) -- (-1,0);
            \draw (-1,0) -- (1,0);
            \draw (1,0) -- (2,-.2);
            \draw[dashed] (0,5) -- (1,1); \draw[dashed] (3.5,0) -- (8,0);
            \draw (1,1) -- (3,1) -- (3.5,0);
            \draw (1,1) -- (1,0); \draw (2,-.2) -- (3.5,0);
            \draw (-1,0) -- (0,1) -- (1,0);
            \draw[dashed] (0,5) -- (0,1);
            \draw (0,1) -- (1,1);
            \draw (1,0) -- (3,1);
            \draw[dashed] (0,-5) -- (1,-1);
            \draw (1,-1) -- (1,0);
            \draw (1,-1) -- (3,-1);
            \draw (1,0) -- (3,-1);
            \draw[dashed] (0,5) -- (-1,1); \draw[dashed] (0,-5) -- (-1,-1);
            \draw (-3,1) -- (-1,1) -- (-1,-1) -- (-3,-1);
            \draw (-3,1) -- (-1,0) -- (-3,-1);
            \draw (0,1) -- (-1,1);

            \draw[dashed] (-8,0) -- (-3,-1); \draw[dashed] (-8,0) -- (-3,1);
            \draw[dashed] (8,0) -- (3,1); \draw[dashed] (8,0) -- (3,-1);

            \draw (-3,-1) -- (-3,1);
            \draw (3,-1) -- (3,1);

    \end{tikzpicture}
    \caption{To the proof of Lemma~\ref{lem:close-balls}.}
    \label{fig:forbidden-construct-2}
\end{figure}
\begin{itemize}
    \item {\it Case $c = w_p$}. Recall that $d(b,w_p) = r_b + 1$. Furthermore, $d(a,w_p) \leq 1 + d(a,x) = r_a + 2$. Therefore, $w_p,w_{pb} \in I(w_b,a) \subseteq I(b,a)$. It implies $w_{pb},z \in S_1(w_b,a)$. By Lemma~\ref{lem:convex}, $w_{pb}z \in E$. 
    \item {\it Case $cw_p \in E$}.  Note that $d(c,w_b) > 1$ (else, $d(p,b) \leq d(p,c) + 1 + d(w_b,b) = r_p - 1 + 1 + r_b - 1 < r_p + r_b$). In particular, due to our choice for the triple $w_p,w_b,c$, $d(w_b,c) > 2$. But then $d(c,w_b) = 3$ and $z,w_{pb} \in S_1(w_b,c)$. By Lemma~\ref{lem:convex}, we obtain $w_{pb}z \in E$.  
\end{itemize}
Summarizing, in both cases we get $w_{pb}z \in E$. Hence, we obtain that there is a $C_4$ with vertex-set $\{w_{pb},z,y,w_p\}$. Since $w_pz \notin E$ (else, $d(p,z) \leq r_p$), by Lemma~\ref{lem:convex}, we have $w_{pb} \sim y$. 
{\hfill $\diamond$}

\begin{claimf}\label{claim:4}
    For any vertex $q \in \{u,v\}$ such that $S_1(x,a) \not\subseteq D(q,r_q)$, $d(q,a) = r_q+r_a$.
    Furthermore, there exist $w_q',w_{qa}' \in N(x)$ such that: $w_q' \sim w_{qa}'$, $d(q,w_{qa}') = d(q,w_q')+1 = r_q$, and $d(w_{qa},a) = r_a$.
\end{claimf}
To prove the claim, it suffices to reproduce the same reasoning as in the proof of Claim~\ref{claim:3}, where we replace $p,a,b,x,y$ with $q,b,a,y,x$.{\hfill $\diamond$}

\medskip
Let us put Claims~\ref{claim:2},~\ref{claim:3},~\ref{claim:4} altogether.
By Claim~\ref{claim:2}, either $S_1(y,b) \not\subseteq D(u,r_u)$ or $S_1(y,b) \not\subseteq D(v,r_v)$.
Without loss of generality, let us assume that $S_1(y,b) \not\subseteq D(u,r_u)$.
By Claim~\ref{claim:3}, applied for $p=u$, the following holds:
\begin{itemize}
    \item $d(u,b) = r_u+r_b$;
    \item there exist adjacent vertices $w_u,w_{ub} \in N(y)$ such that $d(u,w_{ub}) = d(u,w_u)+1=r_u$ and $d(w_{ub},b)=r_b$.
\end{itemize}
As $d(y,b)=r_b+1$ and $y \sim w_{ub}$, we obtain $w_{ub} \in S_1(y,b)$.
By Claim~\ref{claim:2},  $w_{ub} \notin S_{r_u}(u,v)$.
Therefore, as $d(u,w_{ub}) = r_u$, $d(v,w_{ub}) > r_v$.
By Claim~\ref{claim:3}, applied for $p=v$, we obtain the following information: 
\begin{itemize}
    \item $d(v,b) = r_v + r_b$;
    \item there exist adjacent vertices $w_v,w_{vb} \in N(y)$ such that $d(v,w_{vb}) = d(v,w_v) + 1 = r_v$ and $d(w_{vb},b) = r_b$. 
\end{itemize}

We reproduce the same reasoning as above, where we replace $b$ with $a$ and Claim~\ref{claim:3} with Claim~\ref{claim:4}. Doing so, we obtain the following additional information:
\begin{itemize}
    \item $d(u,a) = r_u + r_a$; $d(v,a) = r_v + r_a$;
    \item there exist adjacent vertices $w_u',w_{ua}' \in N(x)$  such that  $d(u,w_{ua}') = d(u,w_u') + 1 = r_u$ and $d(w_{ua}',a) = r_a$;
    \item there exist adjacent vertices $w_v',w_{va}' \in N(x)$ such that $d(v,w_{va}') = d(v,w_v') + 1 = r_v$ and $d(w_{va}',a) = r_a$.
\end{itemize}
By Lemma~\ref{lem:convex}, we have that $w_{ub},w_{vb} \in S_1(y,b)$ are adjacent, and similarly $w_{ua}',w_{va}' \in S_1(x,a)$ are adjacent.
Furthermore, $w_{u} \neq w_{u}'$ (otherwise, $d(a,b) \leq d(a,w_{u}') + d(w_{u},b) = r_a + r_b + 2$) and, similarly, $w_v \neq w_v'$.
In what follows, let us assume $d(w_u,w_u') + d(w_v,w_v')$ to be minimized.
By Lemma~\ref{lem:thinness}, $1 \leq d(w_u,w_u'), d(w_v,w_v') \leq 2$.
We need to distinguish between several cases.

\bigskip
\noindent
\textit{\textbullet~Case $d(w_u,w_u') = 2$ or $d(w_v,w_v') = 2$.}
See Fig.~\ref{fig:forbidden-construct-2-case-1}.
\medskip

\begin{figure}[!ht]
    \centering
    \begin{tikzpicture}

            \draw (-1,0) -- (1,0);
            \draw (1,1) -- (3,1);
            \draw (1,1) -- (1,0); 
            \draw (1,0) -- (3,1);
            \draw (1,-1) -- (1,0);
            \draw (1,-1) -- (3,-1);
            \draw (1,0) -- (3,-1);
            \draw (-3,1) -- (-1,1) -- (-1,-1) -- (-3,-1);
            \draw (-3,1) -- (-1,0) -- (-3,-1);
            \draw (-3,-1) -- (-3,1);
            \draw (3,-1) -- (3,1);
            \draw (-1,0) -- (0,1) -- (1,0); \draw (-1,1) -- (1,1);
            \draw (-1,1) -- (-.5,2) -- (.5,2) -- (1,1); \draw (-.5,2) -- (0,1) -- (.5,2); 
            \draw (-.5,2) -- (0,3) -- (.5,2);

            \draw[thick,red] (0,3) -- (-1,1) -- (-3,1) -- (-1,0) -- (1,0) -- (1,1) -- (0,3);
            \draw[thick, red] (-1,1) -- (-1,0); \draw[thick,red] (-.5,2) --(.5,2);
            \draw[thick, red] (-1,1) -- (1,1); \draw[thick,red] (-.5,2) -- (0,1) -- (.5,2); \draw[thick,red] (-1,0) -- (0,1) -- (1,0);

            \node[circle,fill=black,inner sep=0pt,minimum size=5pt, rotate=45,label=below:{$x$}] at (-1,0) {};
            \node[circle,fill=black,inner sep=0pt,minimum size=5pt, rotate=-45, label=below:{$y$}] at (1,0) {};
            \node[circle,fill=black,inner sep=0pt,minimum size=5pt, rotate=-45,label=right:{$w_u$}] at (1,1) {};
            \node[circle,fill=black,inner sep=0pt,minimum size=5pt, label=above:{$w_{ub}$}] at (3,1) {};
            \node[circle,fill=black,inner sep=0pt,minimum size=5pt, rotate=-45,label=right:{$w_v$}] at (1,-1) {};
            \node[circle,fill=black,inner sep=0pt,minimum size=5pt, rotate=-45,label=right:{$w_{vb}$}] at (3,-1) {};
            \node[circle,fill=black,inner sep=0pt,minimum size=5pt, rotate=-45,label=below:{$w_u'$}] at (-1,1) {};
            \node[circle,fill=black,inner sep=0pt,minimum size=5pt, label=above:{$w_{ua}'$}] at (-3,1) {};
            \node[circle,fill=black,inner sep=0pt,minimum size=5pt, rotate=-45,label=right:{$w_v'$}] at (-1,-1) {};
            \node[circle,fill=black,inner sep=0pt,minimum size=5pt, rotate=-45,label=right:{$w_{va}'$}] at (-3,-1) {};
            \node[circle,fill=black,inner sep=0pt,minimum size=5pt, label=above:{$c$}] at (0,1) {};
            \node[circle,fill=black,inner sep=0pt,minimum size=5pt, label=left:{$t$}] at (-.5,2) {};
            \node[circle,fill=black,inner sep=0pt,minimum size=5pt, label=right:{$s$}] at (.5,2) {};
            \node[circle,fill=black,inner sep=0pt,minimum size=5pt, label=above:{$w_{s,t}$}] at (0,3) {};

    \end{tikzpicture}
    \caption{Case $d(w_u,w_u') = 2$.}
    \label{fig:forbidden-construct-2-case-1}
\end{figure}

\noindent
By symmetry, let $d(w_u,w_u') = 2$.
By Theorem~\ref{th:charact}, $w_ux,w_u'y \notin E$ (else, $D(u,r_u-1)$ would not be convex).
Let $c \in N(x) \cap N(y) \cap S_{r_u-1}(u,v)$, which exists by Corollary~\ref{cor:c3}.
Since we have $w_u,c \in S_1(y,u)$ and $w_u',c \in S_1(x,u)$, by Lemma~\ref{lem:convex}, $w_uc, w_u'c \in E$.
Let $s \in N(w_u) \cap N(c) \cap S_{r_u-2}(u,v)$ and $t \in N(w_u') \cap N(c) \cap S_{r_u-2}(u,v)$;  they exist by Corollary~\ref{cor:c3}.
If $s=t$ then $s \in I(w_u,w_u') \setminus D(v,r_v+1)$, and as $w_u,w_u' \in D(v,r_v+1)$ the latter would contradict the convexity of $D(v,r_v+1)$ (Theorem~\ref{th:charact}).
Therefore, $s \ne t$.
Since also $c \in N(s) \cap N(t), \ c \notin D(u,r_u-2)$ but $s,t \in D(u,r_u-2)$, by Theorem~\ref{th:charact} and as $D(u,r_u-2)$ is convex, we get $st \in E$.
Furthermore, $sw_u',tw_u \notin E$ (else, $D(v,r_v+1))$ would not be convex).
Then, we obtain that $\{x,y,w_u,s,t,w_u'\}$ induces a $C_6$ and so $\{c,x,y,w_u,s,t,w_u'\}$ induces a wheel $W_6$.

\medskip
Since we have $d(w_{ua}',w_{ub}) = 3$, one of the edges $w_{ua}'c,w_{ub}c$ must be missing.
By symmetry, let us assume $w_{ua}',c$ are non-adjacent.

\begin{claimf}
    $d(w_{ua}',w_u) = 3$.
\end{claimf}
Suppose $d(w_{ua}',w_u) < 3$.
Then, $d(w_{ua}',w_u) = 2$ (else, $d(a,b) \leq d(a,w_{ua}') + 1 + d(w_u,b) = r_a + r_b + 2$).
Let $w' \in N(w_{ua}') \cap N(w_u)$ be arbitrary. See Fig.~\ref{fig:claim-dist_wua-wu}.
Observe that $(w_{ua}',w',w_u,w_{ub})$ and $(w_{ua}',x,y,w_{ub})$ are shortest paths.
Therefore, by Lemma~\ref{lem:convex}, $xw' \in E$.
Note that in this situation, there are non-adjacent vertices $x,w_u \in N(w') \cap N(c)$, and therefore also by Lemma~\ref{lem:convex}, $w'c \in E$.
But then, there are non-adjacent vertices $w_{ua}',c \in N(w') \cap N(w_u')$, and so again, by Lemma~\ref{lem:convex}, $w_u'w' \in E$.
Since $D(u,r_u-1)$ must be convex (Theorem~\ref{th:charact}) and $w' \in N(w_u) \cap N(w_u')$, $d(u,w') = r_u-1$.
We replace $w_u'$ by $w'$, thus contradicting the minimality of $d(w_u,w_u') + d(w_v,w_v')$. {\hfill $\diamond$}
\begin{figure}[!h]
    \centering
    \begin{tikzpicture}

        \draw[thick,red] (-3,1) -- (-.5,1.75) -- (1,1);
        \draw[red] (0,1) -- (-.5,1.75) -- (-1,0); \draw[red] (-.5,1.75) -- (-1,1);

        \node[circle,fill=black,inner sep=0pt,minimum size=5pt, rotate=45,label=below:{$x$}] at (-1,0) {};
        \node[circle,fill=black,inner sep=0pt,minimum size=5pt, rotate=-45, label=below:{$y$}] at (1,0) {};
        \node[circle,fill=black,inner sep=0pt,minimum size=5pt, rotate=-45,label=right:{$w_u$}] at (1,1) {};
        \node[circle,fill=black,inner sep=0pt,minimum size=5pt, label=above:{$w_{ub}$}] at (3,1) {};
        \node[circle,fill=black,inner sep=0pt,minimum size=5pt, rotate=-45,label=below:{$w_u'$}] at (-1,1) {};
        \node[circle,fill=black,inner sep=0pt,minimum size=5pt, label=above:{$w_{ua}'$}] at (-3,1) {};
        \node[circle,fill=black,inner sep=0pt,minimum size=5pt, label=above:{$c$}] at (0,1) {};
        \node[circle,fill=red,inner sep=0pt,minimum size=5pt, label=above:{\textcolor{red}{$w'$}}] at (-.5,1.75) {};

        \draw (-3,1) -- (1,1); \draw[thick] (1,1) -- (3,1);
        \draw[thick] (-3,1) -- (-1,0) -- (1,0) -- (3,1);
        \draw (-1,1) -- (-1,0) -- (0,1) -- (1,0) -- (1,1);

    \end{tikzpicture}
    \caption{To the proof of $d(w_{ua}',w_u) = 3$.}
    \label{fig:claim-dist_wua-wu}
    \vspace*{-5mm}
\end{figure}
\begin{claimf}
    $d(w_{ua}',s) = 3$.
\end{claimf}
Consider the edge $yw_u$.
We have $d(w_{ua}',y) < d(w_{ua}',w_u)$ and $d(s,w_u) < d(s,y)$.
By the $\alpha_1$-metric property applied to $w_{ua}',y,w_u,s$, $d(w_{ua}',s) \geq d(w_{ua}',y) + d(w_u,s) = 2 + 1 = 3$. {\hfill $\diamond$}

\medskip
Let $w_{st} \in N(s) \cap N(t) \cap S_{r_u-3}(u,v)$, that exists by Corollary~\ref{cor:c3}.
It follows from the above that the subgraph induced by $\{c,x,y,w_u,s,t,w_u'\} \cup \{w_{ua}',w_{st}\}$ is an isometric \WW, thus contradicting Theorem~\ref{th:charact}.

\bigskip
\noindent
\textit{\textbullet~Case $d(w_u,w_u') = d(w_v,w_v') = 1$.}
\medskip

\noindent
See Fig.~\ref{fig:forbidden-construct-2-case-2}.
Since there is a $C_4$ with vertex-set $\{x,w_u',w_u,y\}$ (respectively, $\{x,w_v',w_v,y\}$), by Lemma~\ref{lem:convex}, one of the edges $xw_u,yw_u'$ (respectively, $xw_v,yw_v'$) must be present. Furthermore, since by Theorem~\ref{th:charact} the disk $D(a,r_a+1)$ (respectively, $D(b,r_b+1)$) must be convex, one of the edges $w_u'y,w_v'y$ (respectively, $w_ux,w_vx$) must be missing. As a result, by symmetry, we only need to consider the subcase when $w_u'y,w_vx \in E$ but $w_ux,w_v'y \notin E$.
Then, $\{x,w_u',w_u,w_{ub},w_{vb},w_v\}$ induces a cycle $C_6$ and $Y = \{x,w_u',w_u,w_{ub},w_{vb},w_v\} \cup \{y\}$ induces a wheel $W_6$.
\begin{figure}[!ht]
    \centering
    \begin{tikzpicture}

            \draw (-1,0) -- (1,0);
            \draw (1,1) -- (3,1);
            \draw (1,1) -- (1,0); 
            \draw (1,0) -- (3,1);
            \draw (1,-1) -- (1,0);
            \draw (1,-1) -- (3,-1);
            \draw (1,0) -- (3,-1);
            \draw (-3,1) -- (-1,1) -- (-1,-1) -- (-3,-1);
            \draw (-3,1) -- (-1,0) -- (-3,-1);
            \draw (-3,-1) -- (-3,1);
            \draw (3,-1) -- (3,1);
            \draw (-1,1) -- (1,1);
            \draw (-1,-1) -- (1,-1);
            \draw (1,-1) -- (-1,0); \draw (-1,1) -- (1,0);
            \draw (1,1) -- (0,2) -- (-1,1);
            \draw (3,1) -- (4,0) -- (3,-1);

            \draw[thick,red] (0,2) -- (-1,1) -- (-1,0) -- (1,-1) -- (3,-1) -- (4,0) -- (3,1) -- (1,1) -- (0,2);
            \draw[thick,red] (-1,1) -- (1,1) -- (1,-1);
            \draw[thick,red] (-1,0) -- (1,0) -- (3,-1) -- (3,1) -- (1,0) -- (-1,1);

            \node[circle,fill=black,inner sep=0pt,minimum size=5pt, rotate=45,label=below:{$x$}] at (-1,0) {};
            \node[circle,fill=black,inner sep=0pt,minimum size=5pt, rotate=45, label=above:{$y$}] at (1,0) {};
            \node[circle,fill=black,inner sep=0pt,minimum size=5pt, rotate=-45,label=right:{$w_u$}] at (1,1) {};
            \node[circle,fill=black,inner sep=0pt,minimum size=5pt, label=above:{$w_{ub}$}] at (3,1) {};
            \node[circle,fill=black,inner sep=0pt,minimum size=5pt, rotate=-45,label=right:{$w_v$}] at (1,-1) {};
            \node[circle,fill=black,inner sep=0pt,minimum size=5pt, rotate=-45,label=right:{$w_{vb}$}] at (3,-1) {};
            \node[circle,fill=black,inner sep=0pt,minimum size=5pt, rotate=-45,label=below:{$w_u'$}] at (-1,1) {};
            \node[circle,fill=black,inner sep=0pt,minimum size=5pt, label=above:{$w_{ua}'$}] at (-3,1) {};
            \node[circle,fill=black,inner sep=0pt,minimum size=5pt, rotate=-45,label=right:{$w_v'$}] at (-1,-1) {};
            \node[circle,fill=black,inner sep=0pt,minimum size=5pt, rotate=-45,label=right:{$w_{va}'$}] at (-3,-1) {};
            \node[circle,fill=black,inner sep=0pt,minimum size=5pt, rotate=-45,label=above:{$\alpha$}] at (0,2) {};
            \node[circle,fill=black,inner sep=0pt,minimum size=5pt, rotate=-45,label=right:{$\beta$}] at (4,0) {};

    \end{tikzpicture}
    \caption{Case $d(w_u,w_u') = d(w_v,w_v')=1$.}
    \label{fig:forbidden-construct-2-case-2}
\end{figure}

\medskip
Let $\alpha \in N(w_u) \cap N(w_u') \cap S_{r_u-2}(u,v)$ and $\beta \in N(w_{ub}) \cap N(w_{vb}) \cap S_{r_b-1}(b,a)$, that both exist by Corollary~\ref{cor:c3}.
By the choice of $\alpha,\beta$, we have $N(\alpha) \cap Y = \{w_{u},w_{u}'\}$ and $N(\beta) \cap Y = \{w_{ub},w_{vb}\}$.
Therefore, $X = \{\alpha,\beta\} \cup Y$ induces a \WW.

Moreover,
\begin{itemize}
    \item $d(\alpha,\beta) \geq d(u,b) - d(u,\alpha) - d(b,\beta) = r_u + r_b - (r_u-2) - (r_b - 1) = 3$;
    \item $d(\alpha,w_{vb}) \geq d(u,w_{vb}) - d(u,\alpha) = r_u+1 - (r_u-2) = 3$;
    \item $d(\alpha,w_v) \geq d(u,v) - d(u,\alpha) - d(v,w_v) = r_u+r_v - (r_u-2) - (r_v-1) = 3$;
    \item $d(\beta,x) \geq d(a,b) - d(a,x) - d(b,\beta) = r_a+r_b+3-(r_a+1)-(r_b-1) = 3$;
    \item $d(\beta,w_u') \geq d(a,b) - d(a,w_u') - d(b,\beta) = r_a+r_b+3-(r_a+1)-(r_b-1) = 3$.
\end{itemize}
As a result, $G[X]$ is an isometric subgraph, thus contradicting Theorem~\ref{th:charact}.
\qed\end{proof}

We are now ready to prove the main result in this section.

\begin{theorem}\label{thm:alpha-1}
If $G=(V,E)$ is an $\alpha_1$-metric graph, then it is $1$-hyperbolic.
\end{theorem}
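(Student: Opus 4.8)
The plan is to reuse the high-level scheme of the proof of Theorem~\ref{thm:main}. By Lemma~\ref{lem:hyp} it is enough to show that the injective hull $\mathcal{H}(G)$ is $1$-hyperbolic; by Lemma~\ref{lem:helly} it is in turn enough to show that the interval thinness of $\mathcal{H}(G)$ is at most $2$; and, exactly as in Theorem~\ref{thm:main}, Lemma~\ref{lem:max-sp} lets us restrict attention to slices of the form $S_k(u,v,\mathcal{H}(G))$ with both $u,v\in V$, since every slice of $\mathcal{H}(G)$ is contained in such a slice. So I would fix $u,v\in V$, a slice $S_k(u,v,\mathcal{H}(G))$, and two of its vertices $y,z$, and aim to prove $d(y,z)\le 2$.

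The one genuinely new idea, compared with Theorem~\ref{thm:main}, is to apply Lemma~\ref{lem:max-sp} a second time, now to the pair $\{y,z\}$ itself. This produces vertices $a,b\in V$ such that every shortest $(y,z)$-path of $\mathcal{H}(G)$ is contained in a shortest $(a,b)$-path of $\mathcal{H}(G)$; in particular, up to swapping $a$ and $b$, the vertices $y,z$ occur on such a path in the order $a,\dots,y,\dots,z,\dots,b$, so that $d(a,b)=d(a,y)+d(y,z)+d(z,b)$. Write $p=d(a,y)$, $q=d(z,b)$ and $\ell=d(y,z)$, so $d(a,b)=p+\ell+q$. Because $y\in S_k(u,v,\mathcal{H}(G))$ we have $d(u,y)=k$ and $d(v,y)=d(u,v)-k$, hence $d(u,v)=k+(d(u,v)-k)$, and the triangle inequality in $\mathcal{H}(G)$ through $y$ (resp.\ through $z$) gives $d(u,a)\le k+p$, $d(v,a)\le (d(u,v)-k)+p$, $d(u,b)\le k+q$ and $d(v,b)\le (d(u,v)-k)+q$. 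Since $a,b,u,v$ all lie in $V$, every distance above is the same whether computed in $\mathcal{H}(G)$ or in $G$, so I can invoke Lemma~\ref{lem:close-balls} inside the $\alpha_1$-metric graph $G$ with radii $r_u=k$, $r_v=d(u,v)-k$, $r_a=p$, $r_b=q$. It yields $d(a,b)\le p+q+2$; combined with $d(a,b)=p+\ell+q$ this forces $\ell\le 2$, i.e.\ $d(y,z)\le 2$, which is exactly what is needed.

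Granting Lemma~\ref{lem:close-balls} (and, through it, Lemma~\ref{lem:alpha-1-thinness}), the deduction above is short, so the real obstacle is hidden in those two lemmas rather than in the theorem itself: the hard part is to manufacture, out of a quadruple $u,v,a,b$ violating the conclusion of Lemma~\ref{lem:close-balls}, an isometric \WW\ or a non-convex disk, which is the intricate case analysis built on Corollary~\ref{cor:c3}, Lemma~\ref{lem:convex}, Lemma~\ref{lem:triangle} and Theorem~\ref{th:charact}. Within the proof of the theorem proper, the only non-obvious decision is to apply Lemma~\ref{lem:max-sp} to the pair $(y,z)$; the more naive attempt, mimicking Theorem~\ref{thm:main} by trying to enclose the whole slice $S_k(u,v,\mathcal{H}(G))$ in a single ball of radius $1$, only seems to deliver radius $2$, and hence the weaker bound of $2$-hyperbolicity that we already have.
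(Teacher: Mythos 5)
Your proposal is correct and follows essentially the same route as the paper: reduce via Lemma~\ref{lem:hyp}, Lemma~\ref{lem:helly} and Lemma~\ref{lem:max-sp} to showing that slices $S_k(u,v,\mathcal{H}(G))$ with $u,v\in V$ have diameter at most $2$, with Lemma~\ref{lem:close-balls} carrying the real weight. The only (harmless) difference is that you apply Lemma~\ref{lem:max-sp} to the pair $y,z$ uniformly, whereas the paper distinguishes three cases according to whether $y,z$ lie in $V$ -- invoking Lemma~\ref{lem:thinness} when both do, and Lemma~\ref{lem:alpha-1-thinness} with Lemma~\ref{lem:dist-in-hull} in the mixed case -- and reserves the Lemma~\ref{lem:max-sp}/Lemma~\ref{lem:close-balls} argument for the case $y,z\notin V$; your unified treatment subsumes those cases, since the corresponding radii $r_a,r_b$ may simply be zero.
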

\begin{proof}
As we already discussed in the proof of Theorem~\ref{thm:main}, it is sufficient to prove that in ${\cal H}(G)$, for every $u,v \in V$ and for every $k \leq d(u,v)$, if $x,y \in S_{k}(u,v,{\cal H}(G))$ then we have $d(x,y) \leq 2$. For that, we need to consider three different cases.
\begin{itemize}
    \item {\it Case $x,y \in V$.} The result follows from Lemma~\ref{lem:thinness}.
    \item {\it Case $x \in V, \ y \notin V$} (the case $x \notin V, \ y \in V$ is symmetric to this one). For every $w \in V$, let $r_y(w) = d(y,w)$. By Lemma~\ref{lem:alpha-1-thinness}, we have $d(x,w) \leq r_y(w) + 2$. By Lemma~\ref{lem:dist-in-hull}, $d(x,y) \leq 2$.
    \item {\it Case $x,y \notin V$.} By Lemma~\ref{lem:max-sp}, there exist $x',y' \in V$ such that any shortest $(x,y)$-path in ${\cal H}(G)$ is contained in a shortest $(x',y')$-path in ${\cal H}(G)$. Let $r_x = d(x,x')$ and $r_y = d(y,y')$. By Lemma~\ref{lem:close-balls}, $d(x',y') \leq r_x + r_y +2$. Therefore, $d(x,y) \leq 2$.
\end{itemize}
The above proves, as claimed, that the interval thinness of ${\cal H}(G)$ is at most two, and therefore that $G$ is $1$-hyperbolic. \qed
\end{proof}

Next, we present two consequences of this result (in Section~\ref{sec:characterization-hyp} and Section~~\ref{sec:alg}, respectively).

    \subsection{A new characterization of $\alpha_1$-metric graphs}\label{sec:characterization-hyp}
    
The following characterization of 1/2-hyperbolic graphs was presented in \cite{BaCh2003}. 

\begin{theorem}[\cite{BaCh2003}]\label{lem:1/2-hyp}
 A graph $G$ is $1/2$-hyperbolic if and only if $G$ is $\alpha_1$-metric and none of the graphs of Fig. \ref{fig:1/2-hyp-forbid} occur as isometric subgraphs. 
\end{theorem}

  \begin{figure}[htb]
    \vspace{-10mm}
    \begin{center} 
          \begin{tikzpicture}
      \node[circle,fill=black,inner sep=0pt,minimum size=5pt, rotate=45] at (-1,.5) {};
      \node[circle,fill=black,inner sep=0pt,minimum size=5pt, rotate=45] at (0,.5) {};
      \node[circle,fill=black,inner sep=0pt,minimum size=5pt, rotate=45] at (1,.5) {};
      \node[circle,fill=black,inner sep=0pt,minimum size=5pt, rotate=45] at (-.5,0) {};
      \node[circle,fill=black,inner sep=0pt,minimum size=5pt, rotate=45] at (.5,0) {};
      \node[circle,fill=black,inner sep=0pt,minimum size=5pt, rotate=45] at (-1,-.5) {};
      \node[circle,fill=black,inner sep=0pt,minimum size=5pt, rotate=45] at (0,-.5) {};
      \node[circle,fill=black,inner sep=0pt,minimum size=5pt, rotate=45] at (1,-.5) {};
      \draw (-1,.5) -- (1,.5) -- (.5,0) -- (1,-.5) -- (-1,-.5) -- (-.5,0) -- (-1,.5);
      \draw (0,.5) -- (-.5,0) -- (0,-.5) -- (.5,0) -- (0,.5);
      \draw (-.5,0) -- (.5,0);
  \end{tikzpicture}\hfill
  \begin{tikzpicture}
      \node[circle,fill=black,inner sep=0pt,minimum size=5pt, rotate=45] at (-1,.5) {};
      \node[circle,fill=black,inner sep=0pt,minimum size=5pt, rotate=45] at (0,.5) {};
      \node[circle,fill=black,inner sep=0pt,minimum size=5pt, rotate=45] at (1,.5) {};
      \node[circle,fill=black,inner sep=0pt,minimum size=5pt, rotate=45] at (-.5,0) {};
      \node[circle,fill=black,inner sep=0pt,minimum size=5pt, rotate=45] at (.5,0) {};
      \node[circle,fill=black,inner sep=0pt,minimum size=5pt, rotate=45] at (-1,-.5) {};
      \node[circle,fill=black,inner sep=0pt,minimum size=5pt, rotate=45] at (0,-.5) {};
      \node[circle,fill=black,inner sep=0pt,minimum size=5pt, rotate=45] at (1,-.5) {};
      \draw (-1,.5) -- (1,.5) -- (.5,0) -- (1,-.5) -- (-1,-.5) -- (-.5,0) -- (-1,.5);
      \draw (0,.5) -- (-.5,0) -- (0,-.5) -- (.5,0) -- (0,.5);
      \draw (-.5,0) -- (.5,0); \draw (0,.5) -- (0,-.5);
  \end{tikzpicture}\hfill
  \begin{tikzpicture}
      \node[circle,fill=black,inner sep=0pt,minimum size=5pt, rotate=45] at (0,1) {}; 
      \node[circle,fill=black,inner sep=0pt,minimum size=5pt, rotate=45] at (-.5,.5) {};
      \node[circle,fill=black,inner sep=0pt,minimum size=5pt, rotate=45] at (.5,.5) {};
      \node[circle,fill=black,inner sep=0pt,minimum size=5pt, rotate=45] at (-1,0) {};
      \node[circle,fill=black,inner sep=0pt,minimum size=5pt, rotate=45] at (0,0) {};
      \node[circle,fill=black,inner sep=0pt,minimum size=5pt, rotate=45] at (1,0) {};
      \node[circle,fill=black,inner sep=0pt,minimum size=5pt, rotate=45] at (0,-1) {}; 
      \node[circle,fill=black,inner sep=0pt,minimum size=5pt, rotate=45] at (-.5,-.5) {};
      \node[circle,fill=black,inner sep=0pt,minimum size=5pt, rotate=45] at (.5,-.5) {};
      \draw (0,1) -- (-.5,.5) -- (-1,0) -- (-.5,-.5) -- (0,-1) -- (.5,-.5) -- (1,0) -- (.5,.5) -- (0,1);
      \draw (-1,0) -- (1,0); \draw (-.5,.5) -- (.5,.5); \draw (-.5,-.5) -- (.5,-.5);
      \draw (-.5,.5) -- (.5,-.5); \draw (.5,.5) -- (-.5,-.5);
  \end{tikzpicture}\hfill
  \begin{tikzpicture}
      \node[circle,fill=black,inner sep=0pt,minimum size=5pt, rotate=45] at (0,1) {};
      \node[circle,fill=black,inner sep=0pt,minimum size=5pt, rotate=45] at (-.75,.75) {};
      \node[circle,fill=black,inner sep=0pt,minimum size=5pt, rotate=45] at (.75,.75) {};
      \node[circle,fill=black,inner sep=0pt,minimum size=5pt, rotate=45] at (-1.25,.25) {};
      \node[circle,fill=black,inner sep=0pt,minimum size=5pt, rotate=45] at (1.25,.25) {};
      \node[circle,fill=black,inner sep=0pt,minimum size=5pt, rotate=45] at (0,0) {};
      \node[circle,fill=black,inner sep=0pt,minimum size=5pt, rotate=45] at (-.75,-.25) {};
      \node[circle,fill=black,inner sep=0pt,minimum size=5pt, rotate=45] at (.75,-.25) {};
      \node[circle,fill=black,inner sep=0pt,minimum size=5pt, rotate=45] at (-.5,-.75) {};
      \node[circle,fill=black,inner sep=0pt,minimum size=5pt, rotate=45] at (.5,-.75) {};
      \node[circle,fill=black,inner sep=0pt,minimum size=5pt, rotate=45] at (0,-1.25) {};
      \draw (0,1) -- (-.75,.75) -- (-1.25,.25) -- (-.75,-.25) -- (-.5,-.75) -- (0,-1.25) -- (.5,-.75) -- (.75,-.25) -- (1.25,.25) -- (.75,.75) -- (0,1); 
      \draw (0,1) -- (0,0) -- (-.75,.75) -- (-.75,-.25) -- (0,0) -- (-.5,-.75) -- (.5,-.75) -- (0,0) -- (.75,-.25) -- (.75,.75) -- (0,0);
  \end{tikzpicture}\hfill
  \begin{tikzpicture}
     \node[circle,fill=black,inner sep=0pt,minimum size=5pt, rotate=45] at (0,1.5) {};
     \node[circle,fill=black,inner sep=0pt,minimum size=5pt, rotate=45] at (-.5,1) {};
     \node[circle,fill=black,inner sep=0pt,minimum size=5pt, rotate=45] at (.5,1) {};
     \node[circle,fill=black,inner sep=0pt,minimum size=5pt, rotate=45] at (-.75,.5) {};
     \node[circle,fill=black,inner sep=0pt,minimum size=5pt, rotate=45] at (.75,.5) {};
     \node[circle,fill=black,inner sep=0pt,minimum size=5pt, rotate=45] at (-1.25,0) {};
     \node[circle,fill=black,inner sep=0pt,minimum size=5pt, rotate=45] at (0,0) {};
     \node[circle,fill=black,inner sep=0pt,minimum size=5pt, rotate=45] at (1.25,0) {};
     \node[circle,fill=black,inner sep=0pt,minimum size=5pt, rotate=45] at (0,-1.5) {};
     \node[circle,fill=black,inner sep=0pt,minimum size=5pt, rotate=45] at (-.5,-1) {};
     \node[circle,fill=black,inner sep=0pt,minimum size=5pt, rotate=45] at (.5,-1) {};
     \node[circle,fill=black,inner sep=0pt,minimum size=5pt, rotate=45] at (-.75,-.5) {};
     \node[circle,fill=black,inner sep=0pt,minimum size=5pt, rotate=45] at (.75,-.5) {};
     \draw (0,1.5) -- (-.5,1) -- (-.75,.5) -- (-1.25,0) -- (-.75,-.5) -- (-.5,-1) -- (0,-1.5) -- (.5,-1) -- (.75,-.5) -- (1.25,0) -- (.75,.5) -- (.5,1) -- (0,1.5);
     \draw (0,0) -- (.5,1) -- (-.5,1) -- (0,0) -- (-.75,.5) -- (-.75,-.5) -- (0,0) -- (-.5,-1) -- (.5,-1) -- (0,0) -- (.75,-.5) -- (.75,.5) -- (0,0);
  \end{tikzpicture}
        \caption{\label{fig:1/2-hyp-forbid} Forbidden isometric subgraphs for 1/2-hyperbolic graphs.} %
    \end{center}
   \vspace*{-8mm}
  \end{figure}

Thus, it follows  from Theorem \ref{thm:alpha-1} and Theorem \ref{lem:1/2-hyp} that the class of $\alpha_1$-metric graphs is a superclass of 1/2-hyperbolic graphs and a subclass of 1-hyperbolic graphs. Theorem \ref{lem:1/2-hyp} describes the place of 1/2-hyperbolic graphs within the class of $\alpha_1$-metric graphs in terms of forbidden isometric subgraphs.   
In what follows, we can also describe the place of $\alpha_1$-metric graphs within the class of 1-hyperbolic graphs in terms of forbidden isometric subgraphs. 
Our work in this sub-section is inspired by the following recent characterization of graphs with convex disks.

\begin{theorem}[see Corollary 4.21 in~\cite{chalopin2022graphs-M}, v1]\label{tm:CB-graphs}
All disks of a graph $G$ are convex if and only if $G$ has no isometric $C_k$ for every $k\ge 4$ such that $k\neq 5$, no isometric $PT$, every induced subgraph isomorphic to $PP_1$ has diameter at most 3 and every induced subgraph isomorphic to $PP_2$ has diameter 2 (see Fig. \ref{fig:CB-forbid}).
\end{theorem}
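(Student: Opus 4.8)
The plan is to reduce Theorem~\ref{tm:CB-graphs} to the older characterization of Soltan and Chepoi recalled here as Lemma~\ref{lem:convex}, which states that all disks of $G$ are convex if and only if (a)~$G$ has no isometric cycle of length $\ell>5$ and (b)~the slice $S_1(x,y)$ is a clique for every pair of vertices $x,y$. The whole content to prove is then that conditions (a) and (b) together are equivalent to the displayed list of forbidden substructures: no isometric $C_k$ for $k\ge 4$ with $k\ne 5$; no isometric $PT$; every induced copy of $PP_1$ has diameter at most $3$; every induced copy of $PP_2$ has diameter $2$ (where $PT,PP_1,PP_2$ are the graphs of Fig.~\ref{fig:CB-forbid}). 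The useful reformulation of (b) is that $S_1(x,y)$ fails to be a clique exactly when there are nonadjacent vertices $a,b$, both adjacent to a vertex $x$, with $d(a,y)=d(b,y)=d(x,y)-1$; equivalently, writing $r=d(x,y)-1$, the disk $D(y,r)$ contains $a,b$ at distance two but not their common neighbour $x$. I will call such a tuple $(y,r,a,b,x)$ a \emph{bad configuration}.

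For the ``only if'' direction, assume (a) and (b). Condition (a) already forbids every isometric $C_k$ with $k\ge 6$; and an induced $C_4$, say $x a y b$, is a bad configuration with $r=1$, contradicting (b), so (since a $4$-cycle is isometric iff induced) $G$ has no isometric $C_4$ either, hence none for $k\ge 4$, $k\ne 5$. For $PT,PP_1,PP_2$ it then remains to check directly on Fig.~\ref{fig:CB-forbid} that, in the ``bad-diameter'' regime, each of them contains a bad configuration, again contradicting (b); this is a finite verification. Thus all four forbidden-substructure conditions hold.

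For the ``if'' direction, assume the forbidden-substructure conditions; by Lemma~\ref{lem:convex} it suffices to establish (a) and (b). Condition (a) is immediate, since forbidding isometric $C_k$ for all $k\ge 4$ with $k\ne 5$ in particular forbids those with $k\ge 6$. Suppose (b) fails; pick a bad configuration $(y,r,a,b,x)$ with $r$ minimum, and among those one minimal in, say, the total number of vertices on a fixed pair of shortest paths $P_a$ (from $a$ to $y$) and $P_b$ (from $b$ to $y$). If $r=1$ the vertices $y,a,x,b$ induce a $C_4$, excluded, so $r\ge 2$. By minimality of $r$, the paths $P_a$ and $P_b$ share only $y$: otherwise their last common vertex $z$, at distance $i>0$ from $y$ along each path, yields a bad configuration $(z,r-i,a,b,x)$ of smaller radius. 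Hence $x$, $P_a$ and $P_b$ span a cycle of length $2r+2\ge 6$ with no chord incident to $x$ (the distance from $x$ to any internal vertex of $P_a$ or $P_b$ is at least $2$). Now one analyses this cycle: either it is an isometric cycle of length $\ge 6$ and (a) is contradicted; or it has a chord between $P_a$ and $P_b$, or an ``antipodal'' pair on it is shortcut through a vertex outside it. In each of the latter cases a bounded amount of structure is forced, and a case analysis --- using again the absence of isometric $C_k$ ($k\ge 4$, $k\ne 5$), and the fact that a chord or shortcut closer to $y$ would produce a bad configuration of smaller radius or smaller size --- shows that $G$ must contain an isometric $PT$, an induced $PP_1$ of diameter $\ge 4$, or an induced $PP_2$ of diameter $\ge 3$, contradicting the hypothesis. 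So (b) holds, and Lemma~\ref{lem:convex} gives that all disks of $G$ are convex.

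The main obstacle is precisely this last case analysis in the ``if'' direction: enumerating, up to the reduction by radius, the minimal bad configurations that can occur, and matching each of them with the finite catalogue $\{C_4,PT,PP_1,PP_2\}$ of Fig.~\ref{fig:CB-forbid}. Two points make it delicate. First, one must carefully handle the gap between ``induced'' and ``isometric'' cycles, since the cycle spanned by $x$, $P_a$, $P_b$ can be chordless yet shortcut from outside --- and it is exactly these external shortcuts that force $PT$, $PP_1$ or $PP_2$ to appear. Second, the diameter thresholds ($3$ for $PP_1$, $2$ for $PP_2$) are tight: below them the corresponding configurations do have all disks convex, so the analysis must establish that a minimal bad configuration always forces strictly larger diameter. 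Identifying the exact graphs $PT,PP_1,PP_2$ and verifying these extremal facts from Fig.~\ref{fig:CB-forbid} is where the real work lies.
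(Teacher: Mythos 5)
This statement is quoted by the paper from \cite{chalopin2022graphs}; the paper itself gives no proof of it, so your attempt can only be judged on its own terms. As written, it is a proof plan rather than a proof, and the missing part is exactly the substance of the theorem. Your ``if'' direction correctly reduces, via Lemma~\ref{lem:convex}, to showing that a minimal bad configuration $(y,r,a,b,x)$ (two nonadjacent vertices $a,b\in S_1(x,y)$) is impossible, and the reductions you do carry out (the case $r=1$ giving an induced $C_4$, the disjointness of $P_a$ and $P_b$ by minimality of $r$, the absence of chords at $x$) are fine. But the conclusion that the resulting chordless-at-$x$ cycle of length $2r+2\ge 6$, when it is not isometric, must force an isometric $PT$, an induced $PP_1$ of $G$-diameter at least $4$, or an induced $PP_2$ of $G$-diameter at least $3$ is simply asserted (``a case analysis \dots shows''). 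This is not a bounded local check: producing an \emph{isometric} copy of $PT$, or controlling the $G$-diameter of an induced copy of $PP_1$ or $PP_2$, are global conditions, and the external shortcuts of the big cycle can occur at arbitrary distance from $y$ and interact with each other. In the source paper this equivalence is established through a long sequence of structural lemmas, not by a short minimal-counterexample analysis, so the deferred step cannot be waved through as routine.

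A secondary gap of the same nature sits in your ``only if'' direction: you call it ``a finite verification'' that $PT$, $PP_1$, $PP_2$ in the bad-diameter regime contain a bad configuration. For the isometric $PT$ this is indeed finite, since $G$-distances restrict to $PT$-distances. But the conditions on $PP_1$ and $PP_2$ concern \emph{induced} copies whose $G$-diameter is large, and an induced copy does not determine $G$-distances between its vertices; the bad configuration must be built from $G$-shortest paths that may leave the copy, so some genuine (if easier) argument is still needed there. In short: the skeleton and the reduction to Lemma~\ref{lem:convex} are reasonable, but the central case analysis --- which is where the theorem lives --- is missing, so the proposal does not constitute a proof.
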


  \begin{figure}[htb]
   \begin{center} 
        \begin{subfigure}{.33\textwidth}
            \centering
            \begin{tikzpicture}
                \node[circle,fill=black,inner sep=0pt,minimum size=5pt, rotate=45] at (-1,0) {};
                \node[circle,fill=black,inner sep=0pt,minimum size=5pt, rotate=45] at (1,0) {};]
                \node[circle,fill=black,inner sep=0pt,minimum size=5pt, rotate=45] at (-.5,.5) {};
                \node[circle,fill=black,inner sep=0pt,minimum size=5pt, rotate=45] at (.5,.5) {};
                \node[circle,fill=black,inner sep=0pt,minimum size=5pt, rotate=45] at (-.5,-.5) {};
                \node[circle,fill=black,inner sep=0pt,minimum size=5pt, rotate=45] at (.5,-.5) {};
                \draw (-1,0) -- (-.5,.5) -- (.5,.5) -- (1,0) -- (.5,-.5) -- (-.5,-.5) -- (-1,0); \draw (.5,.5) -- (.5,-.5);
            \end{tikzpicture}
            \caption{$PT$}
            \label{fig:PT}
        \end{subfigure}\hfill
        \begin{subfigure}{.33\textwidth}
            \centering
            \begin{tikzpicture}
                \node[circle,fill=black,inner sep=0pt,minimum size=5pt, rotate=45] at (0,.25) {};
                \node[circle,fill=black,inner sep=0pt,minimum size=5pt, rotate=45] at (0,-.25) {};
                \node[circle,fill=black,inner sep=0pt,minimum size=5pt, rotate=45] at (-1,0) {};
                \node[circle,fill=black,inner sep=0pt,minimum size=5pt, rotate=45] at (1,0) {};
                \node[circle,fill=black,inner sep=0pt,minimum size=5pt, rotate=45] at (-.75,.5) {};
                \node[circle,fill=black,inner sep=0pt,minimum size=5pt, rotate=45] at (-.75,-.5) {};
                \node[circle,fill=black,inner sep=0pt,minimum size=5pt, rotate=45] at (.75,.5) {};
                \node[circle,fill=black,inner sep=0pt,minimum size=5pt, rotate=45] at (.75,-.5) {};
                \draw (-1,0) -- (-.75,.5) -- (0,.25) -- (.75,.5) -- (1,0) -- (.75,-.5) -- (0,-.25) -- (-.75,-.5) -- (-1,0); \draw (0,.25) -- (0,-.25);
            \end{tikzpicture}
            \caption{$PP_1$}
            \label{fig:PP1}
        \end{subfigure}\hfill
        \begin{subfigure}{.33\textwidth}
            \centering
            \begin{tikzpicture}
                \node[circle,fill=black,inner sep=0pt,minimum size=5pt, rotate=45] at (-1,0) {};
                \node[circle,fill=black,inner sep=0pt,minimum size=5pt, rotate=45] at (1,0) {};
                \node[circle,fill=black,inner sep=0pt,minimum size=5pt, rotate=45] at (-.75,.5) {};
                \node[circle,fill=black,inner sep=0pt,minimum size=5pt, rotate=45] at (-.75,-.5) {};
                \node[circle,fill=black,inner sep=0pt,minimum size=5pt, rotate=45] at (.75,.5) {};
                \node[circle,fill=black,inner sep=0pt,minimum size=5pt, rotate=45] at (.75,-.5) {};
                \node[circle,fill=black,inner sep=0pt,minimum size=5pt, rotate=45] at (0,0) {};
                \draw (-1,0) -- (-.75,.5) -- (.75,.5) -- (1,0) -- (.75,-.5) -- (-.75,-.5) -- (-1,0) -- (1,0);
            \end{tikzpicture}
            \caption{$PP_2$}
            \label{fig:PP2}
        \end{subfigure}
        \caption{\label{fig:CB-forbid} Some obstructions to disk convexity.} %
    \end{center}
   \vspace*{-10mm}
  \end{figure}

The following result would follow from Theorem \ref{th:charact}, Theorem \ref{thm:alpha-1},  Theorem \ref{tm:CB-graphs}, and the fact that a $1$-hyperbolic graph cannot contain any isometric $C_k$ for every $k\ge 8$.
\begin{corollary} \label{cor:alpha_1-to-hyp}
 A graph $G$ is  $\alpha_1$-metric if and only if $G$ is $1$-hyperbolic, has no isometric  $C_4$, $C_6$, $C_7$,  \WW\ or $PT$, every induced subgraph isomorphic to ${PP}_1$ has diameter at most 3 and every induced subgraph isomorphic to $PP_2$ has diameter 2 (see Fig. \ref{fig:forbid} and Fig. \ref{fig:CB-forbid}).  
\end{corollary}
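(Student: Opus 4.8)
The plan is to obtain Corollary~\ref{cor:alpha_1-to-hyp} by chaining three results already at our disposal: Theorem~\ref{th:charact} (a graph is $\alpha_1$-metric if and only if it has convex disks and contains no isometric \WW), Theorem~\ref{thm:alpha-1} (every $\alpha_1$-metric graph is $1$-hyperbolic), and Theorem~\ref{tm:CB-graphs} (a graph has convex disks if and only if it has no isometric $C_k$ for any $k\ge 4$ with $k\neq 5$, no isometric $PT$, every induced $PP_1$ has diameter at most $3$, and every induced $PP_2$ has diameter $2$). The only genuinely new observation needed is that, once $1$-hyperbolicity is assumed, forbidding just the cycles $C_4,C_6,C_7$ as isometric subgraphs already forbids every isometric $C_k$ with $k\ge 4$, $k\neq 5$.

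For the forward implication, assume $G$ is $\alpha_1$-metric. By Theorem~\ref{thm:alpha-1}, $G$ is $1$-hyperbolic. By Theorem~\ref{th:charact}, $G$ has convex disks and no isometric \WW. Applying Theorem~\ref{tm:CB-graphs} to the convex-disks property, $G$ has no isometric $PT$, every induced $PP_1$ has diameter at most $3$, every induced $PP_2$ has diameter $2$, and $G$ has no isometric $C_k$ for any $k\ge 4$ with $k\neq 5$ --- in particular, no isometric $C_4,C_6,C_7$. This is precisely the list of properties claimed by the corollary.

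For the converse, assume $G$ is $1$-hyperbolic, has no isometric $C_4,C_6,C_7,\WW$ or $PT$, every induced $PP_1$ has diameter at most $3$, and every induced $PP_2$ has diameter $2$. By Theorem~\ref{th:charact} it suffices to prove that $G$ has convex disks; and by Theorem~\ref{tm:CB-graphs}, since every condition in its characterization of convex disks is among our hypotheses except the absence of isometric cycles $C_k$ with $k\ge 8$, it suffices to rule out such cycles. I would do this with two facts: first, hyperbolicity does not increase when passing to an isometric subgraph, so an isometric copy of $C_k$ in $G$ would force $\delta(G)\ge\delta(C_k)$; second, $\delta(C_k)\ge\frac{3}{2}$ for every $k\ge 8$. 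Combined with the $1$-hyperbolicity of $G$, these two facts forbid any such $C_k$, finishing the proof.

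The single place with real (but light) work is the estimate $\delta(C_k)\ge\frac{3}{2}$ for $k\ge 8$, which is a short explicit four-point computation: writing $k=4m+r$ with $r\in\{0,1,2,3\}$ and picking four vertices of $C_k$ at cyclic distances as nearly equal as possible, two of the three pairing sums equal the ``antipodal'' sums --- of value about $k$ --- while the third is about $k/2$, which yields $\delta(C_k)\ge\lfloor k/4\rfloor-\frac{1}{2}\ge\frac{3}{2}$ for $k\ge 8$; alternatively one may just cite the known value of $\delta(C_n)$. I expect the small bookkeeping over the four residues modulo $4$ to be the only mildly tedious step, everything else being a direct assembly of the cited theorems. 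It is also worth noting in the write-up why $C_4,C_6,C_7$ must be listed explicitly: each of these three cycles is itself $1$-hyperbolic, so its exclusion is not automatic from $1$-hyperbolicity; by contrast $C_5$ has convex disks, hence is not (and must not be) excluded.
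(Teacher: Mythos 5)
Your proposal is correct and takes essentially the same route as the paper: the paper's proof of Corollary~\ref{cor:alpha_1-to-hyp} is precisely the assembly of Theorem~\ref{th:charact}, Theorem~\ref{thm:alpha-1} and Theorem~\ref{tm:CB-graphs} together with the fact that a $1$-hyperbolic graph contains no isometric $C_k$ for $k\ge 8$. The only difference is that you spell out this last fact via the estimate $\delta(C_k)\ge\lfloor k/4\rfloor-\frac{1}{2}\ge\frac{3}{2}$ for $k\ge 8$ and the distance-preservation of isometric subgraphs, which the paper simply cites as known.
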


However, it was brought to our attention that Theorem~\ref{tm:CB-graphs} was not included in the journal version of~\cite{chalopin2022graphs-M}, see~\cite{chalopin2022graphs}.
So, we decided to give a direct proof of our Corollary~\ref{cor:alpha_1-to-hyp}.
\begin{proof}[Corollary~\ref{cor:alpha_1-to-hyp}]
We prove both implications separately.

$(\Longrightarrow)$ Let $G$ be an $\alpha_1$-metric graph. By Theorem \ref{thm:alpha-1}, $G$ is $1$-hyperbolic. By Theorem \ref{th:charact}, $G$ cannot have an isometric \WW, and all disks of $G$ are convex. Hence, by Lemma \ref{lem:convex}, 
$G$ does not contain isometric cycles of length $\ell > 5$, and for any two nonadjacent vertices $x,y$ of $G$ the slices $S_1(x,y)$ and $S_1(y,x)$ are cliques. The latter implies that $G$ cannot have an isometric $C_4$. It also implies that $G$ cannot have an isometric $PT$, see Fig.~\ref{fig:PT}, because for the two vertices at distance 3 in PT, one of the corresponding slices is not a clique. 
In the same way, $G$ cannot have an induced $PP_1$ with diameter 4, see Fig.~\ref{fig:PP1}, which is because for the vertices at distance 4 in $PP_1$, the two corresponding slices are not cliques. 
Finally, $G$ cannot have an induced $PP_2$ with diameter 3, see Fig.~\ref{fig:PP2}, because for any two vertices at distance 3 in $PP_2$, the corresponding slices are not cliques. \medskip

$(\Longleftarrow)$ 
In what follows, we only prove that all the conditions implying convexity of disks in $G$ hold.  
Since we forbid \WW as an isometric subgraph, by Theorem~\ref{th:charact}, this will prove that $G$ is $\alpha_1$-metric.
Like in \cite{chalopin2022graphs}, we consider the following two conditions, defined for every $k \in \mathbb{N}$. 
\begin{itemize}
\item {\em Interval Neighbourhood condition} (INC$_k$):  for any two vertices $u,v$ with $d(u,v)=k+1$, the slice $S_1(u,v)$ is a clique. 
\item {\em Triangle-Pentagon condition} (TPC$_k$):   for any three vertices $v,x,y$ such that $d(x,v)=d(y,v) =k+1$ and $x\sim y$, either there exists a vertex $z$ with $d(z,v)=k$ such that the vertices $x,y,z$ induce a $C_3$ in $G$, or there exist vertices $z,u,w$ such that $d(z,v)=k-1$ and the vertices $x,y,w,z,u$ induce a $C_5$ in $G$. 
\end{itemize}

We will prove, by induction on $k$, that both those conditions are true for every $k\ge 0$. 
It is straightforward to check that they are always true for $k=0$ (for any graph $G$).
Furthermore, since we forbid $C_4$ as an induced subgraph, the conditions are true for $k=1$.
Next, we check those conditions separately for $k=2$. 
\begin{itemize}
    \item {\it Case $k=2$.}  
    
   ~~~~~To show INC$_2$, let us consider arbitrary vertices $u,v$ with $d(u,v)=3$ and any two distinct $x,y \in S_1(u,v)$. 
   Suppose, for the sake of contradiction, that $x$ and $y$ are not adjacent. 
   We also consider some arbitrary $x' \in N(x) \cap N(v)$, and $y'\in N(y) \cap N(v)$. 
   Since $G$ has no induced $C_4$, necessarily $x'\ne y'$, $y\nsim x'$ and  $x\nsim y'$. 
   Consequently, there is a (not necessarily induced) cycle of length six going through the vertices $u,x,x',v,y',y,u$. 
   Furthermore, the only possible chord of this cycle is $x'y'$.
   However, if $x'\sim y'$, then as $d(u,v)=3$, we get an isometric $PT$, which is forbidden.   
   Therefore, this cycle must be induced, but due to our assumptions it is not isometric.
   It implies that either $d(x,y') = 2$ or $d(x',y) = 2$.
   Without loss of generality, let us assume $d(x,y')=2$ and let $w$ be a common neighbour of $x$ and $y'$. 
   As $d(u,v)=3$, and as we assume that every induced $PP_2$ of $G$ has diameter two, the vertices $u,y,y',v,x',x$ and $w$ cannot induce a $PP_2$. 
   Therefore, $w$ must have some neighbour within $x',y,u,v$.  
   In fact, since $d(u,v)=3$ and $G$ has no induced $C_4$, $w$ is either adjacent to both $u$ and $y$ or it is adjacent to both $x'$ and $v$. 
   By symmetry, let us assume $w\sim u$ and $w\sim y$.  
   But then, the vertices $u,x,w,x',y',v$ form an isometric $PT$, which is forbidden.
   Therefore, necessarily, $x\sim y$.

    ~~~~~To show TPC$_2$, let us now consider three arbitrary vertices $v,x,y$ such that $d(x,v)=d(y,v) =3$ and $x\sim y$. 
    We pick some vertices $x'\in S_1(x,v)$ and $y'\in S_1(y,v)$ minimizing $d(x',y')$. 
    If $x'=y'$, then we are done as $x,y,x'$ induce a $C_3$ in $G$ and $d(x',v)=2$. 
    So, we may assume from now on that $x'\nsim y$, $y'\nsim x$.
    Furthermore, as $G$ cannot have any induced $C_4$, $x' \not\sim y'$.
    Hence, $d(x',y')\ge 2$. 
    Let us consider some arbitrary vertices $u \in N(x') \cap N(v)$, and $w \in N(y') \cap N(v)$.
    If $d(u,y)=2$ (or $d(w,x)=2$) then as, by the induction hypothesis, TPC$_1$ holds for $u,x,y$ (or $w,x,y$), TPC$_2$ holds for $v,x,y$.
    So, we can assume $d(u,y)=3$ and $d(w,x)=3$.
    Furthermore, as $x\nsim y'$ and INC$_2$ holds for $u,y$, we cannot have $d(u,y') = 2$. 
    Hence, $d(u,y')=3$ and, by a symmetric reasoning, $d(w,x')=3$. 
    As $x',v \in N(u)$, $x' \nsim v$ and INC$_2$ holds for $u,y'$, we cannot have $d(x',y') = 2$.
    It implies $d(x',y') = 3$.
    But then, the vertices $x,y,y',w,v,u,x'$ form an isometric $C_7$, which is forbidden.
    Therefore, TPC$_2$ holds.  \medskip
    
  \item {\it Case $k>2$.} 
  
   ~~~~~To show INC$_k$, $k\ge 3$, let us consider arbitrary vertices $u,v$ with $d(u,v)=k+1\ge 4$ and any two distinct $x,y \in S_1(u,v)$. 
   Suppose, for the sake of contradiction, that $x$ and $y$ are not adjacent. 
   We pick some vertices $x' \in S_1(x,v)$ and $y'\in S_1(y,v)$ minimizing $d(x',y')$. 
   Since we assume $G$ to be $1$-hyperbolic, it has interval thinness at most 2.
   In particular, as $x',y' \in S_2(u,v)$, $d(x',y')\le 2$.
   Furthermore, as $G$ has no induced $C_4$, necessarily, $x'\ne y'$, $y\nsim x'$ and  $x\nsim y'$. 
   It implies that either $x' \sim y'$ or $d(x',y') = 2$.
   If $x' \sim y'$ then, by the induction hypothesis, TPC$_{k-2}$ holds for $v,x',y'$.
   Therefore, we either get a vertex $z$ with $d(z,v)=k-2$ such that $x',y',z$ induce a $C_3$, or three vertices $z,a,b$ such that $d(z,v)=k-3$ and the vertices $x',y',b,z,a$ induce a $C_5$.
  In the first situation, the vertices $u,x,y,x',y',z$ form an isometric $PT$, which is forbidden.
  In the second situation, the vertices $u,x,y,x',y',a,b,z$ form an induced $PP_1$ with  $d(z,u)=4$, which is also forbidden. 
  So, we can assume: \medskip
   
   \begin{itemize}
    \item[(i) ] $d(x',y')=2$ for every $x'\in S_1(x,v)$ and every $y' \in S_1(y,v)$; 
    \item[(ii)] moreover (by the contrapositive of (i)), for any distinct $a,b \in S_1(u,v)$, if there exists some $a'\in S_1(a,v)$ and some $b'\in S_1(b,v)$ with $a' \sim b'$, then $a \sim b$.
   \end{itemize} \medskip
   
Let $t \in N(x') \cap N(y')$ be arbitrary. 
If $d(t,u)=3$, then $x,y \in S_1(u,t)$ and as, by the induction hypothesis, INC$_2$ holds for $u,t$, we obtain $x \sim y$, a contradiction.
Therefore, $d(t,u) \le 2$.
It implies $d(v,t) \ge d(u,v)-d(t,u) \ge k+1-2 = k-1$.
Furthermore, $d(t,v) \le 1 + d(x',v) = k$.
If $d(t,v)=k$, then $x',y' \in S_1(t,v)$ and as, by the induction hypothesis, INC$_{k-1}$ holds for $v,t$, we obtain $x' \sim y'$, thus contradicting (i). 
Consequently, $d(t,v) = k-1$, and so $t \in S_2(u,v)$.
Since $t$ was chosen arbitrarily: \medskip

    \begin{itemize}
        \item[(iii)] $N(x') \cap N(y') \subseteq S_2(u,v)$.
    \end{itemize}\medskip

Let $t' \in N(u) \cap N(t)$ be arbitrary.
Since $t \sim x'$, by (i), we have $t \nsim y$.
Similarly, since $t \sim y'$, by (i), we have $t \nsim x$.
Therefore, $t' \ne x$, and $t' \ne y$.
By (ii) applied for $a = x, b = t'$ and $a'= x', b' = t$, necessarily $t'\sim x$.
Similarly, by (ii) applied for $a = y, b = t'$ and $a' = y', b' = t$, necessarily $t' \sim y$.
If $t' \nsim x'$, then $x,x',t,t'$ induce a $C_4$, which is forbidden.
Therefore, $t' \sim x'$ and, by a symmetric reasoning, $t' \sim y'$.
However, as $t' \sim u$, the latter contradicts (iii).
Thus, INC$_k$, $k\ge 3$, holds.


  \medskip
  \noindent
~~~~~To show TPC$_k$, $k\ge 3$, the proof is similar to what we did for TPC$_2$.
More precisely, let us consider three arbitrary vertices $v,x,y$ such that $d(x,v)=d(y,v) =k+1$ and $x\sim y$. 
We pick some vertices $x'\in S_1(x,v)$ and $y'\in S_1(y,v)$ minimizing $d(x',y')$.
If $x'=y'$, then we are done as $x,y,x'$ induce a $C_3$ in $G$ and $d(x',v)=k$. 
So, we may assume from now on that $x'\nsim y$, $y'\nsim x$.
Furthermore, as $G$ cannot have any induced $C_4$, $x' \not\sim y'$.
Hence, $d(x',y')\ge 2$.
Let us consider some arbitrary vertices $u \in S_1(x',v)$ and $w \in S_1(y',v)$.
If $d(u,y)=2$ (or $d(w,x)=2$) then as, by the induction hypothesis, TPC$_1$ holds for $u,x,y$ (or $w,x,y$), TPC$_k$ holds for $v,x,y$.
So, we can assume $d(u,y)=3$ and $d(w,x)=3$ for every $u \in S_1(x',v)$ and every $w \in S_1(y',v)$. 
Furthermore, as $x\nsim y'$ and, by the induction hypothesis, INC$_2$ holds for $u,y$, we cannot have $d(u,y') = 2$. 
Hence, $d(u,y') \ge 3$ for every $u \in S_1(x',v)$ and, by a symmetric reasoning, $d(w,x') \ge 3$ for every $w \in S_1(y',v)$. 

~~~~~Assume $d(x',y')=2$, and let $t\in N(x') \cap N(y')$ be arbitrary. 
Since $t \sim x'$, and $d(v,x') = k$, we get $k-1 \le d(v,t) \le k+1$.
If $d(v,t)=k-1$, then we are done as $t,x',x,y,y'$ induce a $C_5$ with $d(t,v)=k-1$.   
Furthermore, if $d(v,t)=k+1$, then $x',y' \in S_1(t,v)$, and as INC$_k$ holds for $v,t$, we obtain $x' \sim y'$, a contradiction. 
So, we may assume from now on that $d(v,t) = k$ for every $t \in N(x') \cap N(y')$. 
If $x \sim t$, then $x,t,y',y$ form a cycle, and as $G$ has no induced $C_4$, necessarily $y \sim t$. Then, we are done, as $t,x,y$ induce a $C_3$ with $d(t,v)=k$.
Therefore, we can also assume from now on that $t \nsim x$, and by a symmetric argument, $t \nsim y$.
As, by the induction hypothesis, TPC$_{k-1}$ holds for $v,x',t$, we either get a vertex $z$ such that $d(z,v) = k-1$ and $x',t,z$ induce a $C_3$ or three vertices $a,b,z$ such that $d(z,v) = k-2$ and $x',t,a,z,b$ induce a $C_5$.
In the first situation, as $z \in S_1(x',v)$, $d(z,y)=3$. In particular, the vertices $z,x',t,x,y',y$ form an isometric $PT$, which is forbidden.  
In the second situation, $d(z,y) \le d(z,x')+d(x',y) = 4$. Furthermore, $d(z,y) \ge d(y,v)-d(z,v) = 3$.
Consequently, either $d(z,y) = 3$ or $d(z,y) = 4$.
If $d(z,y)=4$, then the vertices $z,a,b,t,x',y',x,y$ induce a $PP_1$ with diameter 4, which is also forbidden. 
Otherwise, as $d(z,x) = d(z,y)=3$, by the induction hypothesis, TPC$_2$ holds for $z,x,y$.
Since $z \in I(x,v) \cap I(y,v)$, the latter implies TPC$_k$ for $v,x,y$.

~~~~~From now on, we assume $d(x',y')=3$ for every $x'\in S_1(x,v)$ and every $y'\in S_1(y,v)$. 
Recall that $u \in S_1(x',v)$ and $w \in S_1(y',v)$.
We now consider the distance between $u$ and $w$.
Since $d(u,y') \ge 3$, $u \nsim w$.
If $d(u,w)=2$, then let $t \in N(u) \cap N(w)$ be arbitrary.
The vertices $u,t,w,y',y,x,x'$ form a (not necessarily induced) cycle of length seven.
Furthermore, any chord of this cycle must be an edge incident to $t$.
Since $d(u,y) = 3$, $t \nsim y$. In the same way: since $d(w,x) = 3$, $t \nsim x$; since $d(u,y') \ge 3$, $t \nsim y'$; and since $d(w,x') \ge 3$, $t \nsim x'$.
Consequently, the vertices $u,t,w,y',y,x,x'$ induce a $C_7$. However, due to our assumptions, this cycle is not isometric.
It implies that either $d(t,x) = 2$ or $d(t,y) = 2$.
Without loss of generality, let us assume $d(t,x) = 2$.
Then necessarily, $d(v,t)\ge k-1$.
Furthermore, $d(v,t) \le d(v,u)+1=k$.
Therefore, either $d(v,t) = k-1$ or $d(v,t) = k$.
If $d(v,t)=k$  then, as $u,w \in S_1(t,v)$ and, by the induction hypothesis, INC$_{k-1}$ holds for $v,t$, we obtain $u \sim w$, a contradiction.
Hence, $d(v,t) = k-1$.
Moreover, if $d(y,t)=2$, then as we also have $d(x,t) = 2$, by the induction hypothesis, TPC$_1$ holds for $t,x,y$.
Since in this situation $t \in I(x,v) \cap I(y,v)$, the latter implies that TPC$_k$ holds for $v,x,y$.
Hence, we may assume from now on $d(y,t)=3$.
Let $t' \in N(x) \cap N(t)$ be arbitrary.
As $t'\in S_1(x,v)$ and $y'\in S_1(y,v)$, $d(t',y')=3$.
But then, the vertices $x,y,y',w,t,t'$ form an isometric $C_6$ in $G$, which is forbidden.
Summarizing, either $d(u,w) > 2$ or a contradiction arises.
Now, for the $4$-tuple $x,u,w,v$, we consider the following three distance sums: (A) $d(x,v)+d(u,w)=k+1+d(u,w)>k+3$; (B)  $d(x,u)+d(v,w)=k+1$; (C)  $d(x,w)+d(v,u)=k+2$. 
As $G$ is 1-hyperbolic, the difference between two largest sums must be at most 2. 
Consequently, $2\ge $(A)-(C)$=d(x,v)+d(u,w)-d(x,w)-d(v,u)=k+1+d(u,w)-(k+2)=d(u,w)-1$, i.e., $d(u,w)\le 3$. 
Altogether combined, we conclude that $d(u,w)= 3$, and this holds for every $u\in S_1(x',v)$ and every $w\in S_1(y',v)$. 

~~~~~As we have established above, we may assume from now on: \medskip

   \begin{itemize}
    \item[(j) ] $d(x',y')=3$ for every $x'\in S_1(x,v)$ and every $y'\in S_1(y,v)$; 
    \item[(jj)] moreover, for every $u\in S_1(x',v)$ and every $w\in S_1(y',v)$, \\ $d(u,y)= d(w,x)= d(u,w)= 3$. 
   \end{itemize} 
\medskip

Let us also recall that $d(u,y') \ge 3$, and $d(w,x') \ge 3$.
Suppose, for the sake of contradiction, $d(u,y') = 4$.
As $k \ge 3$, by the induction hypothesis, INC$_3$ holds for $u,y'$.
In particular, as $w,y \in S_1(y',u)$, we obtain $w \sim y$, a contradiction.
By a symmetric reasoning, if we suppose $d(w,x') = 4$, then a contradiction arises.
Hence, \medskip 

  \begin{itemize}
      \item[(jjj)] $d(u,y') = d(w,x') = 3$, for every $u\in S_1(x',v)$ and every $w\in S_1(y',v)$.
  \end{itemize}
  \medskip

We now consider a shortest path $(u,a,b,y')$ between $u$ and $y'$. 
Since $b \sim y'$, and $d(v,y') = k$, we get $k-1 \le d(v,b) \le k+1$.
In the same way, since $a \sim u$ and $d(v,u) = k-1$, we get $k-2 \le d(v,a) \le k$.
If $d(v,b)=k+1$, then necessarily $d(v,a) = k$.
In this situation, $a,y' \in S_1(b,v)$.
As INC$_k$ holds for $b,v$, we obtain $a \sim y'$, a contradiction.
Consequently, $d(v,b) \le k$.
Furthermore, if  $d(v,b)=k-1$, then $b \in S_1(y',v)$ and so, by (jj) applied for $w=b$, we get $d(u,b)=3$, which is impossible.
Hence, $d(b,v)=k$ must hold.
Note that $x,y,y',b,a,u,x'$ form a cycle of length seven.
Furthermore, any chord of this cycle must be an edge incident to either $a$ or $b$.
As $(u,a,b,y')$ is a shortest path, $a \nsim y'$.
As $d(u,y) = 3$, we also get $a \nsim y$.
If $a\sim x$, then necessarily $d(v,a) = k$, and $a \in S_1(x,v)$.
As $y' \in S_1(y,v)$, by (j) applied for $x'=a$, we get $d(a,y') = 3$, a contradiction.
Similarly, if $a \sim x'$, then in this situation $a,x \in S_1(x',y')$.
As, by the induction hypothesis, INC$_2$ holds for $x',y'$, we obtain $a \sim x$, and a contradiction arises.
Hence, there is no chord incident to $a$.
Next, we prove that there is no chord incident to $b$ either.
Clearly, $b\nsim u$.
As there is no induced $C_4$ in $G$, $b \nsim x'$.
If $b \sim y$, then in this situation $b \in S_1(y,v)$, and so, by (jjj) applied for $y'=b$, $d(u,b) = 3$, a contradiction.
Finally, if $b \nsim y$ but $b \sim x$, then the vertices $b,x,y,y'$ induce a $C_4$, which is forbidden.
Thus, the vertices $x,y,y',b,a,u,x'$ induce a $C_7$, but due to our assumptions this cycle is not isometric. 

~~~~~In this situation, two vertices at distance $3$ in the cycle must be at distance $2$ in $G$.
Recall that by (j), $d(x',y') = 3$; by (jj), $d(u,y) = 3$; and by (jjj), $d(u,y') = 3$.
Moreover, if $d(b,x') = 2$, then necessarily $b,y \in S_1(y',x')$. As, by the induction hypothesis, INC$_2$ holds for $x',y'$, we obtain $b \sim y$, a contradiction.
Therefore, we only need to check the distances $d(b,x)$, $d(a,x)$ and $d(a,y)$.
If $d(a,y)=2$, then necessarily $a,x' \in S_1(u,y)$. As, by the induction hypothesis, INC$_2$ holds for $u,y$, we obtain $a \sim x'$, a contradiction.
In the same way, if $d(b,x)=2$, then as $d(b,x') = 3$, necessarily $u,x \in S_1(x',b)$. As, by the induction hypothesis, INC$_2$ holds for $x',b$, we obtain $u \sim x$, a contradiction.
Consequently, $d(a,x) = 2$.
As $d(a,y) = 3$, we get $x,y' \in S_1(y,a)$.
However, as by the induction hypothesis, INC$_2$ holds for $a,y$, we obtain $x \sim y'$, a contradiction.

~~~~~Overall, as $G$ cannot have any isometric $C_7$,  TPC$_k$, $k\ge 3$, must hold.  
\end{itemize}

Now, Corollary  \ref{cor:alpha_1-to-hyp} follows from  INC$_k$, $k\ge 0$,  Lemma \ref{lem:convex} and Theorem \ref{th:charact}. \qed 
\end{proof}

    \subsection{Algorithmic Applications}\label{sec:alg}
    
Recall that the \emph{eccentricity}~$e(v)$ of a vertex~$v$ in $G$ is defined by $\max_{u \in V} d(u, v)$, i.e., it is the distance to a most distant vertex. 
The {\em diameter} of a graph is the maximum over the eccentricities of all vertices: $diam(G) = \max_{u\in V} e(u)=\max_{u,v\in V} d(u,v)$.
It was left as an open question in  \cite{DrDu23} whether in $\alpha_1$-metric graphs the eccentricity of any  vertex furthest from an arbitrary vertex is at least $diam(G)-2$. Our Theorem \ref{thm:alpha-1} provides an affirmative answer to that question. Indeed, it is known that in any $\delta$-hyperbolic graph the eccentricity of any vertex furthest from an arbitrary vertex is at least $diam(G)-2\delta$ \cite{ChDEHV08}. 
So, we have the following corollary from Theorem \ref{thm:alpha-1}. 

\begin{corollary} \label{cor:alpha_1-d-2}
In every $\alpha_1$-metric  graph $G$,  the eccentricity of any  vertex furthest from an arbitrary vertex is at least $diam(G)-2$. Hence, an additive 2-approximation of the diameter of $G$ can be computed in linear time by a Breadth-First-Search. 
\end{corollary}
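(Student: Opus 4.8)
The plan is to derive this corollary directly from Theorem~\ref{thm:alpha-1} together with a known stability property of eccentricities in hyperbolic graphs. First I would invoke Theorem~\ref{thm:alpha-1} to conclude that every $\alpha_1$-metric graph $G$ is $1$-hyperbolic. Next I would recall the result of~\cite{ChDEHV08} stating that in any $\delta$-hyperbolic graph, for every vertex $u$ and every vertex $v$ furthest from $u$ (i.e., $d(u,v) = e(u)$), one has $e(v) \ge diam(G) - 2\delta$. Substituting $\delta = 1$ immediately yields $e(v) \ge diam(G) - 2$, which is the first assertion and, in particular, settles the question left open in~\cite{DrDu23}.

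For the algorithmic consequence, I would spell out the standard two-sweep procedure: pick an arbitrary vertex $u$, run a BFS from $u$ to obtain all distances $d(u,\cdot)$ and select a vertex $v$ maximizing $d(u,v)$; then run a second BFS from $v$ to compute $e(v) = \max_{w \in V} d(v,w)$ and return this value. By the first part, $diam(G) - 2 \le e(v) \le diam(G)$, so $e(v)$ is an additive $2$-approximation of $diam(G)$. Since the graph is unweighted, each BFS runs in $\mathcal{O}(|V| + |E|)$ time, hence the whole computation is linear.

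I do not expect a genuine obstacle here, as both ingredients are already on hand: the hyperbolicity bound is precisely Theorem~\ref{thm:alpha-1}, and the eccentricity-stability inequality is quoted from~\cite{ChDEHV08}. The only point deserving a line of care is verifying that the cited inequality applies to a vertex furthest from an \emph{arbitrary} starting vertex (and not merely from some specially chosen vertex); this is exactly the form in which it is stated in~\cite{ChDEHV08}, so the argument goes through unchanged.
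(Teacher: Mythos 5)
Your proposal matches the paper's argument exactly: apply Theorem~\ref{thm:alpha-1} to get $1$-hyperbolicity, then invoke the result of~\cite{ChDEHV08} that in a $\delta$-hyperbolic graph any vertex furthest from an arbitrary vertex has eccentricity at least $diam(G)-2\delta$, and conclude with the two BFS sweeps for the linear-time additive $2$-approximation. Nothing is missing; the proof is correct and takes essentially the same route as the paper.
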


Note that computing the exact diameter in subquadratic time, even in  chordal graphs (a proper subclass of $\alpha_1$-metric  graphs), is impossible unless the well known Strong Exponential Time Hypothesis (SETH) is false \cite{BoCrHa16}.

\section{Conclusion}\label{sec:ccl}

Our main result in the paper is that every $\alpha_i$-metric graph must be $f(i)$-hyperbolic, for some function $f$ such that $\frac{i+1}{2} \le f(i) \le 3 \cdot \frac{i+1}{2}$.
It would be interesting to close the gap between the upper bound and the lower bound.
We only managed to do so for $\alpha_1$-metric graphs.
By contrast, general $\alpha_i$-metric graphs ($i \in \mathbb{N}$) seem much less structured.
For instance, they can have metric triangles with side-length unbounded (Lemma~\ref{lem:unbounded-metric-triangle}).
Furthermore, unlike $\delta$-hyperbolic graphs, the $\alpha_i$-metric graphs are not well-behaved under important operations such as $1$-subdivision (see Fig.~\ref{fig:unbounded-alpha-i}) and injective hull (see Fig.~\ref{fig-thinness-sharp}).

Nevertheless, there is  
a natural generalization of an $\alpha_i$-metric, which we call a {\em ($\lambda,\mu$)-bow metric}:  namely, if two shortest paths $P(u,w)$ and $P(v,x)$ share a common shortest subpath $P(v,w)$ of length more than $\lambda$ (that is, they overlap by more than  $\lambda$),  then the distance between $u$ and $x$ is at least $d(u,v)+d(v,w)+d(w,x)-\mu$. Clearly, $\alpha_i$-metric graphs are ($0,i$)-bow metric. However, this generalization is more robust to some graph operations. For instance, the $1$-subdivision of an ($\lambda,\mu$)-bow metric graph must be ($2\lambda+2,2\mu+2$)-bow metric. This notion of ($\lambda,\mu$)-bow metric can also be considered for all geodesic metric spaces. Furthermore, in what follows, we show that every $\delta$-hyperbolic graph (in fact, every  $\delta$-hyperbolic geodesic metric space) is ($\delta, 2\delta$)-bow metric.

\begin{proposition} \label{prop:bow-metric}
Every $\delta$-hyperbolic graph and, generally, every   $\delta$-hyperbolic geodesic metric space is ($\delta, 2\delta$)-bow metric. 
\end{proposition}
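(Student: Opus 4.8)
The plan is to use the definition of $\delta$-hyperbolicity via the $\beta_{2\delta}$-metric property (the four-point condition) together with the fact that $v$ and $w$ lie on shortest paths witnessing several of the relevant distances. Suppose two shortest paths $P(u,w)$ and $P(v,x)$ share a common shortest subpath $P(v,w)$ of length $\ell = d(v,w) > \delta$. Then $v \in I(u,w)$ and $w \in I(v,x)$, so we have the exact identities $d(u,w) = d(u,v) + d(v,w)$ and $d(v,x) = d(v,w) + d(w,x)$. Write $a = d(u,v)$, $b = d(v,w)=\ell$, $c = d(w,x)$, and $D = d(u,x)$; the goal is $D \ge a + b + c - 2\delta$.

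First I would apply the four-point condition to the quadruple $u,v,w,x$. The three pairwise sums of opposite-side distances are:
\[
S_1 = d(u,v) + d(w,x) = a + c, \quad S_2 = d(u,w) + d(v,x) = (a+b) + (b+c), \quad S_3 = d(u,x) + d(v,w) = D + b.
\]
Since $b>\delta\ge 0$ we have $S_2 = a + 2b + c > a + c = S_1$, so $S_2$ is strictly larger than $S_1$; hence $S_1$ is the smallest of the three sums, and the four-point condition states that the difference between the largest two of $\{S_1,S_2,S_3\}$ is at most $2\delta$. I would then split into the two cases according to whether $S_3 \ge S_2$ or $S_3 < S_2$. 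If $S_3 \ge S_2$, then $S_2$ is the median and $S_1$ the minimum, so $S_3 - S_2 \le 2\delta$ gives $D + b \le a + 2b + c + 2\delta$, i.e. $D \le a+b+c+2\delta$; but that is an upper bound, whereas I want a lower bound, so in this case I instead use $S_3 \ge S_2$ directly: $D + b \ge a + 2b + c$, hence $D \ge a + b + c \ge a+b+c-2\delta$, which is even stronger. If $S_3 < S_2$, then $S_2$ is the maximum; the median is $\max\{S_1,S_3\}$ and the four-point condition bounds $S_2$ minus the median by $2\delta$, so in particular $S_2 - S_3 \le 2\delta$, i.e. $(a+2b+c) - (D+b) \le 2\delta$, which rearranges to $D \ge a + b + c - 2\delta$, exactly the claimed inequality. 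In both cases we obtain $d(u,x) \ge d(u,v) + d(v,w) + d(w,x) - 2\delta$, which is the $(\delta,2\delta)$-bow metric property.

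For the geodesic metric space version, the same computation goes through verbatim: the $\beta_{2\delta}$-metric (four-point) characterization of $\delta$-hyperbolicity holds for geodesic spaces, and the identities $d(u,w) = d(u,v)+d(v,w)$, $d(v,x) = d(v,w)+d(w,x)$ follow from the hypothesis that the geodesics $[u,w]$ and $[v,x]$ overlap along the subsegment $[v,w]$ (so $v$ and $w$ are points of the relevant geodesics at the prescribed parameter values). The only subtlety is making sure the case split on the ordering of $S_1,S_2,S_3$ is handled cleanly; since $b>\delta$ forces $S_2>S_1$ strictly, $S_1$ is never the unique maximum, and the argument above covers the two remaining subcases. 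I do not expect a real obstacle here — the main point is simply to track which of the three four-point sums is largest and extract the right inequality — so the proof is short. (Note also that the hypothesis $b>\delta$ is used only to pin down that $S_1$ is not the maximum; one could even drop it at the cost of a slightly weaker statement, but $b>\lambda=\delta$ is the natural threshold matching the definition of $(\lambda,\mu)$-bow metric.)
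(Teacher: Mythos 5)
Your overall route is the same as the paper's: apply the four-point ($\beta_{2\delta}$) condition to $u,v,w,x$ and compare the three sums $S_1=d(u,v)+d(w,x)$, $S_2=d(u,w)+d(v,x)$, $S_3=d(u,x)+d(v,w)$, and your first case ($S_3\ge S_2$) is fine. The gap is in the case $S_3<S_2$: you write that the four-point condition bounds $S_2$ minus the median by $2\delta$, ``so in particular $S_2-S_3\le 2\delta$''. But the median is $\max\{S_1,S_3\}\ge S_3$, so $S_2-S_3\ge S_2-\max\{S_1,S_3\}$ and the inference goes in the wrong direction: if the median were $S_1$ (i.e.\ $S_3<S_1$), the condition would only give $S_2-S_1\le 2\delta$, which says nothing about $D=d(u,x)$. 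The missing step is to rule this subcase out: $S_2-S_1=2d(v,w)>2\delta$ by hypothesis, so $S_1$ cannot be the second-largest sum; hence the median is $S_3$ and your inequality $S_2-S_3\le2\delta$ follows. This exclusion is exactly where the paper's proof invokes $d(v,w)>\delta$, and it is the heart of the proposition. (For the same reason, your earlier claim that $S_2>S_1$ already makes $S_1$ ``the smallest of the three sums'' is unjustified at that point, though harmless.)

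Relatedly, your closing parenthetical --- that $d(v,w)>\delta$ is used only to pin down that $S_1$ is not the maximum, and ``one could even drop it at the cost of a slightly weaker statement'' --- is incorrect and reflects the same misunderstanding: ensuring $S_2>S_1$ only needs $d(v,w)>0$, while $d(v,w)>\delta$ is what kills the bad median. If overlaps of length at most $\delta$ were allowed, no bound of the form $d(u,x)\ge d(u,v)+d(v,w)+d(w,x)-\mu$ with $\mu$ depending only on $\delta$ can hold: the ladder of Fig.~\ref{fig:ladder} is $1$-hyperbolic and has $v\in I(u,w)$, $w\in I(v,x)$ with $d(v,w)=1$ but defect $2\ell$, which is unbounded. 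Once the missing exclusion step is added, your argument is correct and coincides with the paper's proof.
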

 
\begin{proof} Consider four arbitrary vertices $u,v,w,x$ such that $v\in I(u,w), w\in I(v,x)$ and $d(v,w)>\delta$. We need to show that $d(x,u)\ge d(u,v)+d(v,w)+d(w,x)-2\delta.$  Consider the three distance sums: $(A)$ $d(u,w)+d(x,v)$, $(B)$ $d(x,u)+d(w,v)$, and $(C)$ $d(x,w)+d(u,v)$. Clearly, $(A)$ is strictly larger than $(C)$.  If $(B)$ is strictly larger than $(A)$, then   $d(x,u)+d(w,v)>d(u,w)+d(x,v)=d(u,v)+2d(v,w)+d(w,x).$ On the other hand, $d(u,x)+d(w,v)\le d(u,v)+d(v,w)+d(w,x)+d(w,v)= d(u,v)+2d(v,w)+d(w,x)$. The obtained contradiction shows that $(A)$ is at least $(B)$. So, $(A)$ is the largest of the three sums. If $(C)$ is the second largest sum then, by $\delta$-hyperbolicity, $2\delta\ge (A)-(C)= d(u,w)+d(x,v)-d(x,w)-d(u,v)=2d(v,w)>2\delta$, which is impossible.  Hence, $(B)$ is the second largest sum and, by  $\delta$-hyperbolicity, $2\delta\ge (A)-(B)= d(u,w)+d(x,v)-d(x,u)-d(w,v)=d(u,v)+2d(v,w)+d(w,x)-d(x,u)-d(v,w)=d(u,v)+d(v,w)+d(w,x)-d(x,u)$, i.e., $d(x,u)\ge d(u,v)+d(v,w)+d(w,x)-2\delta.$ 
\qed
\end{proof}

We believe that the study of ($\lambda,\mu$)-bow metrics could help in deriving new properties of $\alpha_i$-metric graphs and $\delta$-hyperbolic graphs.

\section*{Data Availability Statement}

Data sharing is not applicable to this article as no datasets were generated or analysed during the current study.


\end{document}